 \documentclass[a4paper,12pt,reqno]{amsart}
 \usepackage[french,british]{babel}
 \usepackage{a4wide}
 \usepackage{amsthm}
 \usepackage{amssymb}
 \usepackage{longtable}

 \setcounter{tocdepth}{1}
 
 
 



 %
 \newtheorem{thm}{Theorem}[section]
 \newtheorem{cor}[thm]{Corollary}
 \newtheorem{prop}[thm]{Proposition}
 \newtheorem{lem}[thm]{Lemma}

 \theoremstyle{definition}

 \theoremstyle{remark}
 \newtheorem{rem}[thm]{Remark}

\makeatletter
\let\c@equation\c@thm
\makeatother
\numberwithin{equation}{section}

\bibliographystyle{plain}
 
 
\begin{document}
 	
\author{S.Senthamarai Kannan and Pinakinath Saha}
 	
\address{Chennai Mathematical Institute, Plot H1, SIPCOT IT Park, 
 		Siruseri, Kelambakkam,  603103, India.}
 	
\email{kannan@cmi.ac.in.}
 	
\address{Chennai Mathematical Institute, Plot H1, SIPCOT IT Park, 
 		Siruseri, Kelambakkam, 603103, India.}
\email{pinakinath@cmi.ac.in.}
 
\title{rigidity of bott-samelson-demazure-hansen variety for $PSO(2n+1, \mathbb{C})$}

\begin{abstract} Let $G=PSO(2n+1, \mathbb{C}) (n \ge 3)$ and $B$ be the Borel subgroup of $G$ containing maximal torus $T$ of $G.$ Let $w$ be an element of Weyl group $W$ and $X(w)$ be the Schubert variety in the flag variety $G/B$ corresponding to $w.$ Let $Z(w, \underline{i})$ be the Bott-Samelson-Demazure-Hansen variety (the desingularization of $X(w)$) corresponding to a reduced expression $\underline{i}$ of $w.$
 		
 		In this article, we study the cohomology modules of the tangent bundle on $Z(w_{0}, \underline{i}),$ where $w_{0}$ is the longest element of the Weyl group $W.$ We describe all the reduced expressions of $w_{0}$ in terms of a Coxeter element such that all the higher cohomology modules of the tangent bundle on $Z(w_{0}, \underline{i})$ vanish (see Theorem \ref{theorem 8.1}).  
\end{abstract}	
\maketitle
 	
\section{introduction}

 Let $G$ be a simple algebraic group of adjoint type over  the field $\mathbb{C}$ of 
complex numbers. We fix a maximal torus $T$ of $G$ and 
let $W = N_G(T)/T$ denote the Weyl group of $G$ with respect to $T$.
We denote by $R$ the set of roots of $G$ with respect to $T$ and by
 $R^{+}$ a set of positive roots. Let $B^{+}$ be the Borel 
subgroup of $G$ containing $T$ with respect to $R^{+}$. 
Let $w_0$ denote the longest element of the Weyl group $W$. Let $B$ be the 
Borel subgroup of $G$ opposite to $B^+$ determined by $T$, i.e. $B=n_{w_0}B^+n_{w_0}^{-1}$, where $n_{w_0}$ is a representative of $w_0$ in $N_G(T)$.
Note that the 
roots of 
$B$ is the set $R^{-} :=-R^{+}$ of negative roots. We use the notation $\beta<0$ for 
$\beta \in R^{-}$.
Let $S = \{\alpha_1,\ldots,\alpha_n\}$ 
denote the set of all simple roots in $R^{+}$, where $n$ is the rank of $G$. 
The simple reflection in the Weyl group corresponding to a simple root $\alpha$ is denoted
by $s_{\alpha}$. For simplicity of notation, the simple reflection corresponding to a simple root $\alpha_{i}$ is denoted
by $s_{i}$. 

For $w \in W$, let $X(w):=\overline{BwB/B}$ denote the Schubert variety in
$G/B$ corresponding to $w$. Given a reduced expression 
$w=s_{{i_1}}s_{{i_2}}\cdots s_{{i_r}}$ of $w$, with 
the corresponding tuple $\underline i:=(i_1,\ldots,i_r)$, we denote by 
$Z(w,{\underline i})$ the desingularization  of the Schubert variety $X(w)$, 
which is now known as Bott-Samelson-Demazure-Hansen variety. This was 
first introduced by Bott and Samelson in a differential geometric and 
topological context (see \cite{BS}). Demazure in  \cite{De} and 
Hansen in \cite{Han} independently adapted the construction in 
algebro-geometric situation, which explains the reason for the name. 
For the sake of simplicity, we will denote any Bott-Samelson-Demazure-Hansen 
variety by a BSDH-variety. 

The construction of the BSDH-variety $Z(w,{\underline i})$ depends on the 
choice of the reduced expression $\underline i$ of $w$.  In [5],  the automorphism groups of these varieties were studied. There, the following vanishing results of the tangent bundle $T_{Z(w, \underline i)}$ on $Z(w, \underline i)$ were proved 
(see \cite[Section 3]{CKP}):\\
(1) $H^j(Z(w, \underline i), T_{Z(w, \underline i)})=0$ for all $j\geq 2$.\\
(2) If $G$ is simply laced, then $H^j(Z(w, \underline i), T_{Z(w, \underline i)})=0$ for all
  $j\geq 1$.

As a consequence, it follows that the BSDH-varieties are rigid 
for simply laced groups and their deformations are  unobstructed in general (see [5, Section 3] ). 
 The above vanishing result is independent of the choice of the reduced expression $\underline i$ of 
 $w$. While computing the first cohomology module $H^1(Z(w, \underline i),  T_{Z(w, \underline i)})$ for non simply laced group, we observed that this cohomology module very much depend on the choice of a reduced expression $\underline i$ of $w$.

It is a natural question to ask  that for which reduced expressions $\underline i$ of $w$, 
the cohomology module $H^1(Z(w, \underline i), T_{Z(w, \underline i)})$  does vanish ?
In \cite{CK}, a partial answer is given to this question for $w=w_0$ when $G=PSp(2n, \mathbb C)$. 
In this article, we give a partial answer to this question for $w=w_0$ when $G=PSO(2n+1, \mathbb C)$.     

Recall that a Coxeter element is an element of the Weyl group 
having a reduced expression of the form $s_{i_{1}}s_{i_{2}} \cdots s_{i_{n}}$
such that $i_{j}\neq i_{l}$ whenever $j\neq l$ (see \cite[p.56, Section 4.4]{Hum3}). Note that for any Coxeter element $c$, there is a decreasing sequence of integers $n\geq a_1> a_2 > \ldots > a_k=1$ such that $c=\prod\limits_{j=1}^{k}[a_j, a_{j-1}-1]$, where $a_0:=n+1$, $[i, j]:=s_{i}s_{i+1}\cdots s_{j}$ for $i\leq j$.
 
In this paper we prove the following theorem. 
\begin{thm} 
Let $G=PSO(2n+1, \mathbb C)$ $(n\geq 3)$ and let $c\in W$ be a Coxeter element.
Let $\underline i=(\underline i^1, \underline i^2,\ldots, \underline i^n)$ be a sequence corresponding to a reduced expression of $w_0$, where
$\underline i^r$ $(1\leq r\leq n)$ is a sequence of reduced expressions of $c$ (see Lemma 2.8). 
Then, $H^j(Z(w_0, \underline i), T_{(w_0, \underline i)})=0$ for all $j\geq 1$ if and only if $c=\prod\limits_{j=1}^{k}[a_j, a_{j-1}-1]$, where $a_0:=n+1$ and $a_2\neq n-1$.
\end{thm}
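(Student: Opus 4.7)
The plan is as follows. By the result of \cite{CKP} recalled above, $H^j(Z(w_0,\underline i), T_{Z(w_0,\underline i)})=0$ for all $j\geq 2$ regardless of the choice of $\underline i$, so the task reduces to characterising those $\underline i$ for which $H^1$ vanishes. Because $\underline i$ is a concatenation of reduced expressions $\underline i^1,\ldots,\underline i^n$ of the Coxeter element $c$, I would work with the tower
\[
Z_n \xrightarrow{\pi_n} Z_{n-1} \xrightarrow{\pi_{n-1}} \cdots \xrightarrow{\pi_1} Z_0 = \mathrm{pt},
\]
where $Z_r := Z(c^r,(\underline i^1,\ldots,\underline i^r))$ and each $\pi_r$ is a composition of $n$ successive $\mathbb{P}^1$-fibrations. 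At every individual $\mathbb{P}^1$-step the standard relative tangent short exact sequence relates $T_{Z_r}$ to $\pi_r^{*}T_{Z_{r-1}}$ and a $T$-linearised line bundle associated to the simple root being added, and the Leray spectral sequence reduces the problem to successive cohomology computations of such line bundles on the lower floors.

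A first observation is that the hypothesis $a_2\neq n-1$ is equivalent to saying that $c$ does not begin with the subword $s_n s_{n-1}$: indeed $a_2=n-1$ forces $a_1=n$, so the first two blocks of $c$ are exactly $s_n$ and $s_{n-1}$. Thus the condition isolates precisely those Coxeter elements whose reduced expression opens with the collision of the short simple reflection $s_n$ and its long neighbour $s_{n-1}$; this is the only place in type $B_n$ where the length-$4$ braid relation $s_{n-1}s_n s_{n-1}s_n=s_n s_{n-1}s_n s_{n-1}$ can create a pairing as negative as $-2$ against a coroot, and the rest of the proof is really a way of making this heuristic precise.

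To treat the line bundles I would use the dictionary between BSDH cohomology and Demazure modules, computing each $H^0$ by iterated application of Demazure operators and reading $H^1$ as an obstruction that appears exactly when some pairing $\langle\mu,\alpha_i^\vee\rangle\leq -2$ shows up along the tower. For the \emph{if} direction, when $c=\prod[a_j,a_{j-1}-1]$ with $a_2\neq n-1$, I would show inductively on $r$ that the successive $T$-characters appearing stay Demazure-regular in the appropriate sense, so that each $\mathbb{P}^1$-step is acyclic; the fact that $s_n$ is never immediately followed by $s_{n-1}$ at the start of $c$ is precisely what prevents the critical pairing $\langle\cdot,\alpha_{n-1}^\vee\rangle=-2$ from occurring. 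The combinatorial bookkeeping of the $a_j$'s needed here should be tractable, if tedious.

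For the \emph{only if} direction, when $a_2=n-1$, I would exhibit an explicit non-zero class in $H^1(Z(w_0,\underline i),T_{Z(w_0,\underline i)})$. After the first two steps of the tower the short-root/long-root braid produces a line bundle with weight $-2$ on a distinguished $\mathbb{P}^1$-fibre, yielding an $H^1$-contribution on the sub-BSDH variety built from $\underline i^{\,1}$ and the opening portion of $\underline i^{\,2}$. The principal obstacle I foresee is verifying that this class survives in the full $H^1$ and is not killed by an edge homomorphism coming from the higher floors $Z_r$, $r\geq 3$; this requires fairly careful $T$-equivariant weight bookkeeping in the Leray spectral sequence and exploiting that no later block $\underline i^{\,r}$ can introduce a compensating cancellation. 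This survival argument is where the bulk of the type-$B_n$-specific work will lie.
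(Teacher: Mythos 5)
Your skeleton matches the paper's as far as it goes: the reduction to $H^1$ via the $j\geq 2$ vanishing from \cite{CKP}, the tower of $\mathbb{P}^1$-fibrations with a long exact sequence at each step, and the identification of $a_2=n-1$ with the case $c=s_ns_{n-1}v$. Your ``only if'' direction is essentially the paper's argument, and the survival problem you flag as the main difficulty is in fact easy: since $H^2(w,\alpha)=0$ for every $w$ and every simple root $\alpha$, the long exact sequence shows that the restriction $H^1(Z(w_0,\underline{j}),T_{(w_0,\underline{j})})\to H^1(Z(w,\underline{i}),T_{(w,\underline{i})})$ to \emph{any} initial segment is surjective (this is the paper's Lemma in Section 6, quoted from the $C_n$ case). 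So the nonzero class produced on $Z(s_ns_{n-1})$ --- detected by comparing $H^1(s_ns_{n-1},\alpha_{n-1})=\mathbb{C}_{\alpha_{n}+\alpha_{n-1}}$ with $H^0(s_n,\alpha_n)$, whose $(\alpha_n+\alpha_{n-1})$-weight space is zero --- automatically forces $H^1(Z(w_0,\underline{i}),T_{(w_0,\underline{i})})\neq 0$; no Leray bookkeeping over the higher floors is required.

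The genuine gap is in your ``if'' direction. Your inductive claim that $a_2\neq n-1$ keeps every $\mathbb{P}^1$-step acyclic is false. First, the critical pairings $\leq -2$ occur against $\alpha_n^\vee$ (the short coroot), not $\alpha_{n-1}^\vee$ (against a long coroot every root other than $\mp\alpha_{n-1}$ pairs to $-1,0,1$ --- this is exactly how the paper proves $H^1(w,\alpha_j)=0$ for $j\neq n-1$ in Section 3, an auxiliary vanishing you would also need in order to discard all steps of the tower except the $\alpha_{n-1}$-steps). Second, and more importantly, these $-2$ pairings occur for \emph{every} Coxeter element: the paper computes in Section 5 that $H^1(w_r,\alpha_{n-1})\neq 0$ for all $3\leq r\leq k$ and that $H^1(u_1,\alpha_{n-1})\neq 0$ for $u_1=w_ks_n[a_k,n-1]$, with weights of the form $-(\beta_j+\alpha_n)$, regardless of $a_2$. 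The hypothesis $a_2\neq n-1$ only kills the contribution at the very last floor, $H^1(w_2,\alpha_{n-1})=0$. The nonzero contributions at all intermediate floors must be killed by a different mechanism: one has to show that the connecting homomorphism $H^0(Z(\tau_r,\underline{j}'_r),T)_{\mu}\to H^1(w_r,\alpha_{n-1})_{\mu}$ is surjective, so that the next $H^1$ is an isomorphism onto the previous one. That is the real content of the proof (the two main lemmas of Section 6), and it rests on (i) the explicit weight-space computations of $H^0$ of the relative tangent bundles along the tower (Section 4), (ii) the fact that $H^0(Z(w_0,\underline{i}),T)$ is a parabolic subalgebra of $\mathfrak{g}$, giving $\dim H^0(\cdot)_{\mu}\leq 1$ for $\mu\neq 0$ and hence the exact dimension count $\dim H^0(Z(\tau_r,\underline{j}'_r),T)_{\mu}=2$ on the critical weights, and (iii) the pairwise disjointness of the weight supports of the various $H^1(w_r,\alpha_{n-1})$. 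As written, your induction would stall at the first floor where a nonzero $H^1(w_r,\alpha_{n-1})$ appears, i.e.\ already at $r=k$.
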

 
By the above vanishing results, we conclude that if $G=PSO(2n+1, \mathbb C)$ $(n\geq 3)$ and $\underline i=(\underline i^1, \underline i^2,\ldots, \underline i^n)$
is a reduced expression of $w_0$ as above, then the BSDH-variety $Z(w_0, \underline i)$ is rigid.

The main differences in the proof of main theorem between the case of type $C_{n}$ and the case of type $B_{n}$ are as follows:

 When $G$ is of type $C_{n},$  there is only one long simple root, namely 
$\alpha_{n}.$ Therefore,  by  \cite[Corollary 5.6, p.778]{Ka}, we have  $H^1(w, \alpha_{j})=0$ for any $w\in W$ and for any $j\neq n.$ But, if $G$ is of type $B_{n},$ $\alpha_{1}, \alpha_{2}, \ldots , \alpha_{n-1}$ are all long simple roots. So, we can not apply  \cite[Corollary 5.6, p.778]{Ka}. Hence, we need to study the cohomology modules $H^1(w, \alpha_{j})$ ( $w\in W$, $ j \neq n-1$ ). 
 While studying these modules, we prove that $H^1(w, \alpha_{j})=0$ for any $w\in W$ and for 
 any $j\neq n-1$  (see Lemma \ref{lemma 2.5}).  Also, in this article, we need to prove an additional statement namely,  $(s_{1}s_{2}\cdots s_{n})^{r-1}s_{1}s_{2}\cdots s_{n-1}(\alpha_{j})<0$ for $2\le r \le n$ and $n+1-r\le j\le n-1$ (see Lemma \ref{lemma 3.3}).

 The organization of the paper is as follows:

In Section 2, we recall some preliminaries on BSDH-varieties. We deal with the special case $G=PSO(2n+1, \mathbb C)$  $(n\geq 3)$ in the later sections 3, 4, 5, 6 and 7. In Section 3, we prove $H^1(w, \alpha_{j})=0$ for $j\neq n-1$ and $w\in W.$
In Section 4 (respectively, Section 5) we compute the weight spaces of $H^0$ (respectively, $H^1$) of the relative tangent bundle of BSDH-varieties
associated to some elements of the Weyl group. 
In Section 6, we prove some results on cohomology modules of the tangent bundle of BSDH varieties.
In Section 7, we prove the  main result using the results from the previous sections.

\section{preliminaries}
In this section, we set up some notation and preliminaries. We refer to \cite{Bri}, \cite{Hum1}, \cite{Hum2}, \cite{Jan} for preliminaries in algebraic groups and Lie algebras.

Let $G$ be a simple algebraic group of adjoint type over $\mathbb{C}$ and $T$ be a maximal torus of
$G$.  Let $W=N_{G}(T)/T$ denote the Weyl group of $G$ with respect to $T$ and we denote 
the set of roots of $G$ with respect to $T$ by $R$. Let $B^{+}$ be a  Borel subgroup of $G$ 
containing $T$. Let $B$ be the Borel subgroup of $G$ opposite to $B^{+}$ determined by $T$. 
That is, $B=n_{0}B^{+}n_{0}^{-1}$, where $n_{0}$ is a representative in $N_{G}(T)$ of the longest     element $w_{0}$ of $W$. Let  $R^{+}\subset R$ be 
the set of positive roots of $G$ with respect to the Borel subgroup $B^{+}$. Note that the set of 
roots of $B$ is equal to the set $R^{-} :=-R^{+}$ of negative roots.

Let $S = \{\alpha_1,\ldots,\alpha_n\}$ denote the set of simple roots in
$R^{+}.$ For $\beta \in R^{+},$ we also use the notation $\beta > 0$.  
The simple reflection in  $W$ corresponding to $\alpha_i$ is denoted
by $s_{\alpha_i}$. Let $\mathfrak{g}$ be the Lie algebra of $G$. 
Let $\mathfrak{h}\subset \mathfrak{g}$ be the Lie algebra of $T$ and  $\mathfrak{b}\subset \mathfrak{g}$ be the Lie algebra of $B$. Let $X(T)$ denote the group of all characters of $T$. 
We have $X(T)\otimes \mathbb{R}=Hom_{\mathbb{R}}(\mathfrak{h}_{\mathbb{R}}, \mathbb{R})$, the dual of the real form of $\mathfrak{h}$. The positive definite 
$W$-invariant form on $Hom_{\mathbb{R}}(\mathfrak{h}_{\mathbb{R}}, \mathbb{R})$ 
induced by the Killing form of $\mathfrak{g}$ is denoted by $(~,~)$. 
We use the notation $\left< ~,~ \right>$ to
denote $\langle \mu, \alpha \rangle  = \frac{2(\mu,
	\alpha)}{(\alpha,\alpha)}$,  for every  $\mu\in X(T)\otimes \mathbb{R}$ and $\alpha\in R$.  
We denote by $X(T)^+$ the set of dominant characters of 
$T$ with respect to $B^{+}$. Let $\rho$ denote the half sum of all 
positive roots of $G$ with respect to $T$ and $B^{+}.$
For any simple root $\alpha$, we denote the fundamental weight
corresponding to $\alpha$  by $\omega_{\alpha}.$  For $1\leq i \leq n,$ let $h(\alpha_{i})\in \mathfrak{h}$ be the fundamental coweight corresponding to $\alpha_{i}.$ That is ; $\alpha_{i}(h(\alpha_{j}))=\delta_{ij},$ where $\delta_{ij}$ is Kronecker delta.

For a simple root $\alpha \in S,$ let $n_{\alpha}\in N_{G}(T)$ be a representative of $s_{\alpha}.$  We denote the minimal parabolic subgroup of $G$ containing $B$ and $n_{\alpha}$ by $P_{\alpha}.$  We recall that the BSDH-variety corresponds to a reduced expression $\underline{i}$ of $w=s_{i_{1}}s_{i_{2}}\cdots s_{i_{r}}$ defined by
$$Z(w,\underline{i})=\frac{P_{\alpha_{1}}\times P_{\alpha_{2}}\times\cdots \times P_{\alpha_{i_{r}}}}{B\times B\times \cdots \times B},$$

where the action of $B\times B\times \cdots \times B$ on $P_{\alpha_{i_{1}}}\times P_{\alpha_{i_{2}}}\times\cdots\times P_{\alpha_{i_{r}}}$ is given by

 $(p_{1}, p_{2}, \dots ,p_{r})(b_{1}, b_{2}\dots, b_{r})=(p_{1}\cdot b_{1}, b_{1}^{-1} \cdot  p_{2}\cdot b_{2},\dots, b_{r-1}^{-1}\cdot p_{r}\cdot b_{r}),$ $p_{j}\in P_{\alpha_{i_{j}}}$, $b_{j}\in B$ and $\underline{i}=(i_{1},i_{2},\dots,i_{r})$ (see \cite[Definition 1, p.73]{De}, \cite[Definition 2.2.1, p.64]{Bri}).

We note that for each reduced expression $\underline{i}$ of $w,$ $Z(w,\underline{i})$ is a smooth projective variety. We denote by $\phi_{w},$ the natural birational surjective morphism from $Z(w, \underline{i})$ to $X(w).$

Let $f_{r}:Z(w, \underline{i})\longrightarrow Z(ws_{i_{r}}, \underline{i'})$ denote the map induced by the projection

 $P_{\alpha_{i_{1}}}\times P_{\alpha_{i_{2}}}\times\cdots\times P_{\alpha_{i_{r}}}\longrightarrow P_{\alpha_{i_{1}}}\times P_{\alpha_{i_{2}}}\times \cdots \times P_{\alpha_{i_{r-1}}},$ where $\underline{i'}=(i_{1},i_{2},\dots, i_{r-1}).$ Then we observe that $f_{r}$ is a $P_{\alpha_{i_{r}}}/B\simeq \mathbb{P}^1$-fibration.

For a $B$-module $V,$ let $\mathcal{L}(w, V)$ denote the restriction of the associated homogeneous vector bundle on $G/B$ to $X(w).$ By abuse of notation, we denote the pull back of $\mathcal{L}(w, V)$ via $\phi_{w}$ to $Z(w, \underline{i})$ also by  $\mathcal{L}(w, V),$ when there is no confusion. Since for any $B$-module $V$ the vector bundle $\mathcal{L}(w, V)$ on $Z(w, \underline{i})$ is the pull back of the homogeneous vector bundle from $X(w),$ we conclude that the cohomology modules

$$H^j(Z(w,\underline{i}), \mathcal{L}(w, V))\simeq H^j(X(w), \mathcal{L}(w, V))$$

for all $j\ge 0$ (see \cite[Theorem 3.3.4(b)]{Bri}), are independent of choice of reduced expression $\underline{i}.$ Hence we denote $H^j(Z(w, \underline{i}), \mathcal{L}(w, V))$ by $H^j(w, V).$ In particular, if $\lambda$ is character of $B,$ then we denote the cohomology modules $H^j(Z(w,\underline{i}), \mathcal{L}_{\lambda})$ by $H^j(w, \lambda).$

We recall the following short exact sequence of $B$-modules from \cite{CKP}, we call it $SES$ :

\begin{enumerate}
	\item [(1)] $H^0(w, V)\simeq H^0(s_{\gamma}, H^0(s_{\gamma}w, V)).$
	
	\item[(2)] $0\rightarrow H^1(s_{\gamma}, H^0(s_{\gamma}w, V))\rightarrow H^1(w, V)\rightarrow  H^0(s_{\gamma}, H^1(s_{\gamma}w, V))\rightarrow0.$
\end{enumerate}

Let $\alpha$ be a simple root and $\lambda\in X(T)$ be such that $\langle \lambda, \alpha\rangle\ge0.$ Let $\mathbb{C}_{\lambda}$ denote one dimensional $B$-module associated to $\lambda.$ Here, we recall the following result due to Demazure \cite[p.271]{Dem} on short exact sequence of $B$-modules:

\begin{lem} \label{lemma 1.1}
	Let $\alpha$ be a simple root and $\lambda\in X(T)$ be such that $\langle \lambda , \alpha \rangle \ge 0.$ Let $ev:H^0(s_{\alpha}, \lambda)\longrightarrow \mathbb{C}_{\lambda}$ be the evaluation map. Then we have 
	\begin{enumerate}
		\item[(1)] If $\langle \lambda, \alpha \rangle =0,$ then $H^0(s_{\alpha}, \lambda)\simeq \mathbb{C}_{\lambda}.$
		\item[(2)] If $\langle \lambda , \alpha \rangle \ge 1,$ then $\mathbb{C}_{s_{\alpha}(\lambda)}\hookrightarrow H^0(s_{\alpha}, \lambda) $, and there is a short exact sequence of $B$-modules:
		$$0\rightarrow H^0(s_{\alpha}, \lambda-\alpha)\longrightarrow H^0(s_{\alpha}, \lambda)/\mathbb{C}_{s_{\alpha}(\lambda)}\longrightarrow \mathbb{C}_{\lambda}\rightarrow 0.$$ Further more, $H^{0}(s_{\alpha}, \lambda- \alpha)=0$ when $\langle\lambda , \alpha \rangle=1.$
		\item[(3)] Let $n=\langle \lambda ,\alpha \rangle.$ As a $B$-module, $H^0(s_{\alpha}, \lambda)$ has a composition series 
		$$0\subseteq V_{n}\subseteq V_{n-1}\subseteq \dots \subseteq V_{0}=H^0(s_{\alpha},\lambda)$$
		
		such that  $V_{i}/V_{i+1}\simeq \mathbb{C}_{\lambda - i\alpha}$ for $i=0,1,\dots,n-1$ and $V_{n}=\mathbb{C}_{s_{\alpha}(\lambda)}.$
		
	\end{enumerate}
\end{lem}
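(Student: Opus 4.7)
The plan is to reduce the entire lemma to a rank-one computation on $P_{\alpha}/B \cong \mathbb{P}^{1}$, using that $Z(s_{\alpha}, \underline{i}) = P_{\alpha}/B$ and $H^{0}(s_{\alpha}, \lambda) = H^{0}(P_{\alpha}/B, \mathcal{L}_{\lambda})$; the line bundle $\mathcal{L}_{\lambda}$ on this $\mathbb{P}^{1}$ has degree $n := \langle \lambda, \alpha\rangle$.

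First I would apply Borel--Weil for the rank-one Levi $L_{\alpha}$ of $P_{\alpha}$: $H^{0}(s_{\alpha}, \lambda)$ is the dual Weyl module of $L_{\alpha}$ of highest weight $\lambda$, of dimension $n+1$, with one-dimensional $T$-weight spaces of weights $\lambda, \lambda - \alpha, \dots, \lambda - n\alpha$, and $\lambda - n\alpha = s_{\alpha}(\lambda)$. When $n = 0$ the line bundle is trivial, so $H^{0}(s_{\alpha}, \lambda) \cong \mathbb{C}_{\lambda}$, which gives part (1).

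For part (3), I would observe that every root subgroup $U_{\beta}$ with $\beta < 0$ and $\beta \neq -\alpha$ acts trivially on $H^{0}(s_{\alpha}, \lambda)$: any nontrivial action on a weight vector of weight $\mu$ would produce a nonzero vector of weight $\mu + j\beta$ for some $j \geq 1$, but since $\beta$ is not a $\mathbb{Z}$-multiple of $\alpha$, no such weight occurs in $H^{0}(s_{\alpha}, \lambda)$. Hence the $B$-module structure is determined by the actions of $T$ and $U_{-\alpha}$, and the $B$-stable subspaces are precisely the ``tail'' subspaces $V_{i}$ spanned by the weight vectors of weights $\lambda - i\alpha, \lambda - (i+1)\alpha, \dots, \lambda - n\alpha$ for $i = 0, 1, \dots, n$. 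The successive quotients are then $V_{i}/V_{i+1} \cong \mathbb{C}_{\lambda - i\alpha}$, and $V_{n} = \mathbb{C}_{s_{\alpha}(\lambda)}$; this proves part (3) and simultaneously the embedding $\mathbb{C}_{s_{\alpha}(\lambda)} \hookrightarrow H^{0}(s_{\alpha}, \lambda)$ of part (2).

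For the short exact sequence in part (2), the evaluation map $ev$ induces a $B$-surjection $H^{0}(s_{\alpha}, \lambda)/\mathbb{C}_{s_{\alpha}(\lambda)} \twoheadrightarrow \mathbb{C}_{\lambda}$ whose kernel is $V_{1}/V_{n}$, a $B$-module with weights $\lambda - \alpha, \lambda - 2\alpha, \dots, \lambda - (n-1)\alpha$. Since $\langle \lambda - \alpha, \alpha\rangle = n - 2$ and $\lambda - (n-1)\alpha = s_{\alpha}(\lambda - \alpha)$, part (3) applied to the weight $\lambda - \alpha$ presents $H^{0}(s_{\alpha}, \lambda - \alpha)$ as a $B$-module with exactly the same weight list, and the canonical identification $V_{1}/V_{n} \cong H^{0}(s_{\alpha}, \lambda - \alpha)$ delivers the displayed SES. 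When $n = 1$, the line bundle $\mathcal{L}_{\lambda - \alpha}$ has degree $-1$ on $\mathbb{P}^{1}$, so $H^{0}(s_{\alpha}, \lambda - \alpha) = 0$, giving the final assertion.

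The hard part is producing the canonical identification $V_{1}/V_{n} \cong H^{0}(s_{\alpha}, \lambda - \alpha)$ as $B$-modules: matching composition factors and dimensions is not enough by itself, since one needs a natural morphism of $B$-modules. I would construct one via a multiplication map $H^{0}(s_{\alpha}, \lambda - \alpha) \otimes \mathbb{C}_{\alpha} \to H^{0}(s_{\alpha}, \lambda)$ induced at the sheaf level by multiplication with a canonical $B$-semi-invariant section of $\mathcal{L}_{\alpha}$ vanishing at $eB$, verify that its image lies in $V_{1}$, and conclude by a dimension count that the induced map into $V_{1}/V_{n}$ is an isomorphism of $B$-modules.
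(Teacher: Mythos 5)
The paper offers no argument for this lemma at all --- it is quoted verbatim from Demazure's \emph{A very simple proof of Bott's theorem} --- so your proposal has to be judged on its own merits. Most of it is sound and is essentially Demazure's rank-one computation: part (1), the weight-space description, the observation that the root subgroups $U_{\beta}$ with $\beta<0$, $\beta\neq-\alpha$ act trivially (so that the $B$-module structure is governed by $T$ and $U_{-\alpha}$ alone), the resulting filtration by tails $V_{i}$ in part (3), the embedding $\mathbb{C}_{s_{\alpha}(\lambda)}\hookrightarrow H^{0}(s_{\alpha},\lambda)$, and the vanishing of $H^{0}(s_{\alpha},\lambda-\alpha)$ when $n=1$ are all correct.

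The genuine gap is in the construction you propose for the key identification $V_{1}/V_{n}\simeq H^{0}(s_{\alpha},\lambda-\alpha)$. In this paper $B$ is the \emph{negative} Borel, so the unique $B$-stable line in $H^{0}(s_{\alpha},\alpha)$ is the lowest-weight line, of weight $s_{\alpha}(\alpha)=-\alpha$ (not $\alpha$, so already the twist in your map $H^{0}(s_{\alpha},\lambda-\alpha)\otimes\mathbb{C}_{\alpha}\to H^{0}(s_{\alpha},\lambda)$ is wrong: the top weight of the source would be $\lambda$, which cannot land in $V_{1}$). Worse, the divisor of zeros of that semi-invariant section is a $B$-stable effective divisor of degree $2$ on $\mathbb{P}^{1}$, and since $eB$ is the only $B$-fixed point it must equal $2[eB]$; in coordinates the section is $y^{2}$, not $y$. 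Hence multiplication by it identifies $H^{0}(s_{\alpha},\lambda-\alpha)\otimes\mathbb{C}_{-\alpha}$ with $V_{2}$ (sections vanishing to order $\ge 2$), which \emph{contains} $V_{n}$; the induced map to $V_{1}/V_{n}$ therefore has kernel $\mathbb{C}_{s_{\alpha}(\lambda-\alpha)}$ and misses the weight-$(\lambda-\alpha)$ line, so it is neither injective nor surjective, and no dimension count can rescue it. The identification you need is nevertheless true and follows from machinery you have already set up: $V_{1}/V_{n}$ is a uniserial $B_{\alpha}$-module (the $U_{-\alpha}$-lowering maps between consecutive weight spaces of the irreducible $(n+1)$-dimensional module are isomorphisms in characteristic zero), so by the structure of indecomposable $B_{\alpha}$-modules (Lemma 2.4 of the paper, i.e.\ $V'\otimes\mathbb{C}_{\mu}$ with $V'$ an irreducible $\tilde{L}_{\alpha}$-module) its dimension and weight string force $V_{1}/V_{n}\simeq H^{0}(s_{\alpha},\lambda-\alpha)$. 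Replace the multiplication-map argument by this (or by complete reducibility for $L_{\alpha}$ in characteristic zero) and the proof is complete.
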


We define the dot action by $w\cdot \lambda= w(\lambda + \rho)-\rho,$ where $\rho$ is the half sum of positive roots. As a consequence of exact sequences of Lemma \ref{lemma 1.1}, we can prove the following.

Let $w\in W$, $\alpha$ be a simple root, and set $v=ws_{\alpha}$.
\begin{lem} \label{lemma 1.2}
	If $l(w) = l(v)+1$, then we have
	\begin{enumerate}
		
		\item  If $\langle \lambda , \alpha \rangle \geq 0$, then 
		$H^{j}(w , \lambda) = H^{j}(v, H^0({s_\alpha, \lambda}) )$ for all $j\geq 0$.
		
		\item  If $\langle \lambda ,\alpha \rangle \geq 0$, then $H^{j}(w , \lambda ) = H^{j+1}(w , s_{\alpha}\cdot \lambda)$ for all $j\geq 0$.
		
		\item If $\langle \lambda , \alpha \rangle  \leq -2$, then $H^{j+1}(w , \lambda ) = H^{j}(w ,s_{\alpha}\cdot \lambda)$ for all $j\geq 0$. 
		
		\item If $\langle \lambda , \alpha \rangle  = -1$, then $H^{j}( w ,\lambda)$ vanishes for every $j\geq 0$.
		
	\end{enumerate}
	
\end{lem}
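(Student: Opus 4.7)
The plan is to derive all four parts from one source, namely the Leray spectral sequence of the $\mathbb{P}^1$-fibration $f_r\colon Z(w,\underline i)\to Z(v,\underline{i'})$ recalled just above the lemma, where $\underline i$ is chosen to be any reduced expression of $w$ ending in $s_\alpha$ (possible because $\ell(v)=\ell(w)-1$). Each fibre is $P_\alpha/B\simeq\mathbb P^1$, and $B$-equivariance together with proper base change identifies the higher direct images as $R^q(f_r)_{\ast}\mathcal L_\lambda\cong\mathcal L(v,H^q(s_\alpha,\lambda))$, so Leray yields
\[
E_2^{p,q}=H^p\bigl(v,\,H^q(s_\alpha,\lambda)\bigr)\;\Longrightarrow\;H^{p+q}(w,\lambda),
\]
with only the rows $q=0$ and $q=1$ potentially nonzero. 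Degeneration in all cases will then come from one of these two rows being killed by Lemma \ref{lemma 1.1}.

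Parts (1) and (4) drop out immediately. If $\langle\lambda,\alpha\rangle\ge 0$, Lemma \ref{lemma 1.1} gives $H^1(s_\alpha,\lambda)=0$, the row $q=1$ disappears, the spectral sequence collapses, and we obtain $H^j(w,\lambda)\cong H^j(v,H^0(s_\alpha,\lambda))$. If $\langle\lambda,\alpha\rangle=-1$, the restriction of $\mathcal L_\lambda$ to each fibre is $\mathcal O_{\mathbb P^1}(-1)$, whose $H^0$ and $H^1$ both vanish; hence $H^q(s_\alpha,\lambda)=0$ for $q=0,1$ and every abutment $H^j(w,\lambda)$ is zero.

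For part (3), when $\langle\lambda,\alpha\rangle\le -2$, Lemma \ref{lemma 1.1} forces $H^0(s_\alpha,\lambda)=0$, while Borel--Weil--Bott on $P_\alpha/B\simeq\mathbb P^1$ supplies a $B$-equivariant isomorphism $H^1(s_\alpha,\lambda)\cong H^0(s_\alpha,s_\alpha\cdot\lambda)$, noting that $\langle s_\alpha\cdot\lambda,\alpha\rangle=-\langle\lambda,\alpha\rangle-2\ge 0$. Now only the row $q=1$ survives, so the spectral sequence gives $H^{j+1}(w,\lambda)\cong H^j(v,H^0(s_\alpha,s_\alpha\cdot\lambda))$; feeding $s_\alpha\cdot\lambda$ into part (1) already proved rewrites the right-hand side as $H^j(w,s_\alpha\cdot\lambda)$. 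Part (2) is then obtained from part (3) by the substitution $\lambda\leftrightarrow s_\alpha\cdot\lambda$, using that $s_\alpha\cdot$ is an involution on characters and that $\langle\lambda,\alpha\rangle\ge 0$ is equivalent to $\langle s_\alpha\cdot\lambda,\alpha\rangle\le -2$.

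The only genuinely non-formal ingredient is the $B$-equivariant identification $H^1(s_\alpha,\lambda)\cong H^0(s_\alpha,s_\alpha\cdot\lambda)$ in the regime $\langle\lambda,\alpha\rangle\le -2$; this is standard and can be extracted either from Serre duality on $P_\alpha/B$ (whose canonical bundle is $\mathcal L_{-\alpha}$) or by unwinding the $B$-module filtration produced by Lemma \ref{lemma 1.1}(3) applied to $s_\alpha\cdot\lambda$. Everything else in the argument amounts to routine bookkeeping with a two-row spectral sequence.
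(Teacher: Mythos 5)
Your proof is correct and is essentially the argument the paper intends: the paper offers no written proof beyond the remark that the lemma follows from Lemma \ref{lemma 1.1}, and the implicit mechanism is exactly the two-row Leray spectral sequence of the $\mathbb{P}^1$-fibration (equivalently the SES displayed in the preliminaries) combined with the $\mathbb{P}^1$ computations $H^1(s_\alpha,\lambda)=0$ for $\langle\lambda,\alpha\rangle\ge 0$, total vanishing for $\langle\lambda,\alpha\rangle=-1$, and $H^1(s_\alpha,\lambda)\cong H^0(s_\alpha,s_\alpha\cdot\lambda)$ for $\langle\lambda,\alpha\rangle\le -2$. Your bootstrapping of (2) and (3) from (1) via the involution $\lambda\mapsto s_\alpha\cdot\lambda$ is the standard Demazure argument and checks out.
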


The following consequence of Lemma \ref {lemma 1.2} will be used to compute the cohomology modules in this paper.
Now onwards we will denote the Levi subgroup of $P_{\alpha}$($\alpha \in S$) containing $T$ by $L_{\alpha}$ and the subgroup $L_{\alpha}\cap B$ by $B_{\alpha}.$ Let $\pi: \tilde{G}\longrightarrow G$ be the universal cover. Let $\tilde{L}_{\alpha}$ (respectively, $\tilde{B_{\alpha}}$) be the inverse image of $L_{\alpha}$ (respectively, $B_{\alpha}$).
\begin{lem}\label{lemma1.3}
	Let $V$ be an irreducible  $L_{\alpha}$-module. Let $\lambda$
	be a character of $B_{\alpha}$. Then we have 
	\begin{enumerate}
		\item As $L_{\alpha}$-modules, $H^j(L_{\alpha}/B_{\alpha}, V \otimes \mathbb C_{\lambda})\simeq V \otimes
		H^j(L_{\alpha}/B_{\alpha}, \mathbb C_{\lambda}).$
		\item If
		$\langle \lambda , \alpha \rangle \geq 0$, then 
		$H^{0}(L_{\alpha}/B_{\alpha} , V\otimes \mathbb{C}_{\lambda})$ 
		is isomorphic as an $L_{\alpha}$-module to the tensor product of $V$ and 
		$H^{0}(L_{\alpha}/B_{\alpha} , \mathbb{C}_{\lambda})$. Further, we have 
		$H^{j}(L_{\alpha}/B_{\alpha} , V\otimes \mathbb{C}_{\lambda}) =0$ 
		for every $j\geq 1$.
		
		\item If
		$\langle \lambda , \alpha \rangle  \leq -2$, then 
		$H^{0}(L_{\alpha}/B_{\alpha} , V\otimes \mathbb{C}_{\lambda})=0$, 
		and $H^{1}(L_{\alpha}/B_{\alpha} , V\otimes \mathbb{C}_{\lambda})$
		is isomorphic to the tensor product of  $V$ and $H^{0}(L_{\alpha}/B_{\alpha} , 
		\mathbb{C}_{s_{\alpha}\cdot\lambda})$. 
		\item If $\langle \lambda , \alpha \rangle  = -1$, then 
		$H^{j}( L_{\alpha}/B_{\alpha} , V\otimes \mathbb{C}_{\lambda}) =0$ 
		for every $j\geq 0$.
	\end{enumerate}
\end{lem}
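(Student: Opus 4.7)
The plan is to reduce everything to the standard Borel--Weil--Bott calculation on $L_\alpha/B_\alpha\simeq \mathbb{P}^1$ by first establishing (1), which factors the $L_\alpha$-module $V$ out of the cohomology.

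First I would prove (1). Since $V$ is an $L_\alpha$-module (not merely a $B_\alpha$-module), the associated vector bundle $L_\alpha\times^{B_\alpha}V$ on $L_\alpha/B_\alpha$ is $L_\alpha$-equivariantly trivial: the map $L_\alpha\times^{B_\alpha}V\longrightarrow (L_\alpha/B_\alpha)\times V$ sending $[g,v]\mapsto (gB_\alpha,\,g\cdot v)$ is well defined (for $b\in B_\alpha$ one has $[gb,\,b^{-1}v]\mapsto (gbB_\alpha,\,gv)=(gB_\alpha,\,gv)$) and is an $L_\alpha$-equivariant isomorphism of vector bundles, where $L_\alpha$ acts diagonally on the right-hand side. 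Consequently the bundle associated to the diagonal $B_\alpha$-module $V\otimes\mathbb{C}_\lambda$ is the tensor product of the trivial bundle $V\otimes\mathcal{O}_{L_\alpha/B_\alpha}$ with the line bundle associated to $\mathbb{C}_\lambda$, and the projection formula (equivalently, pulling the finite-dimensional constant factor $V$ out of the cohomology) gives the desired isomorphism of $L_\alpha$-modules
\[
H^j(L_\alpha/B_\alpha,\,V\otimes\mathbb{C}_\lambda)\;\simeq\;V\otimes H^j(L_\alpha/B_\alpha,\,\mathbb{C}_\lambda).
\]

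For parts (2), (3) and (4), I would invoke (1) to reduce to computing $H^j(L_\alpha/B_\alpha,\mathbb{C}_\lambda)$. This is the classical cohomology of a line bundle of degree $\langle\lambda,\alpha\rangle$ on $\mathbb{P}^1$; alternatively it is the $w=s_\alpha$ instance of Lemma~\ref{lemma 1.2} under the identification $X(s_\alpha)\simeq P_\alpha/B\simeq L_\alpha/B_\alpha$. When $\langle\lambda,\alpha\rangle\ge 0$, Lemma~\ref{lemma 1.1} produces a nontrivial $H^0$ while the $\mathbb{P}^1$ vanishing gives $H^1=0$; when $\langle\lambda,\alpha\rangle\le -2$, Lemma~\ref{lemma 1.2}(3) forces $H^0=0$ and identifies $H^1(L_\alpha/B_\alpha,\mathbb{C}_\lambda)$ with $H^0(L_\alpha/B_\alpha,\mathbb{C}_{s_\alpha\cdot\lambda})$; and when $\langle\lambda,\alpha\rangle=-1$, Lemma~\ref{lemma 1.2}(4) makes all cohomology vanish. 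Tensoring through with $V$ via (1) then yields (2), (3) and (4) exactly as stated.

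The only point requiring care is the $L_\alpha$-equivariance of the trivialization in step one, which must be checked so that the resulting tensor decomposition respects the $L_\alpha$-module structure rather than merely the underlying vector-space structure. Beyond this formal bookkeeping, the argument is routine once the $\mathbb{P}^1$ cohomology is in hand, so I do not expect any substantive obstacle.
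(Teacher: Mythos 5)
Your proposal is correct and follows essentially the same route as the paper: parts (2)--(4) are deduced from part (1) together with Lemma \ref{lemma 1.2} applied to $w=s_{\alpha}$ under the identification $L_{\alpha}/B_{\alpha}\simeq P_{\alpha}/B$. The only difference is that you prove the tensor identity (1) directly via the $L_{\alpha}$-equivariant trivialization of the bundle associated to $V$, whereas the paper simply cites Jantzen (Propositions I.4.8 and I.5.12); your argument is a correct proof of exactly that cited statement.
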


\begin{proof} Proof (1).
	By \cite[Proposition 4.8, p.53, I]{Jan} and \cite[Proposition 5.12, p.77, I]{Jan}, 
	for all $j\geq 0$, we have the following isomorphism of  $L_{\alpha}$-modules:
	$$H^j(L_{\alpha}/B_{\alpha}, V \otimes \mathbb C_{\lambda})\simeq V \otimes
	H^j(L_{\alpha}/B_{\alpha}, \mathbb C_{\lambda}).$$ 
	
	Proof of (2), (3) and (4) follows from Lemma \ref{lemma 1.2}  by taking $w=s_{\alpha}$ and 
	the fact that $L_{\alpha}/B_{\alpha} \simeq P_{\alpha}/B$.

\end{proof}

Recall the structure of indecomposable 
modules  over   $B_{\alpha}$ and $\widetilde {B}_{\alpha}$ (see \cite[Corollary 9.1, p.130]{BKS}).

\begin{lem}\label{lemma 1.4}
	\begin{enumerate}
		\item
		Any finite dimensional indecomposable $\widetilde{B}_{\alpha}$-module $V$ is isomorphic to 
		$V^{\prime}\otimes \mathbb{C}_{\lambda}$ for some irreducible representation
		$V^{\prime}$ of $\widetilde{L}_{\alpha}$ and for some character $\lambda$ of $\widetilde{B}_{\alpha}$.
		\item
		Any finite dimensional indecomposable $B_{\alpha}$-module $V$ is isomorphic to 
		$V^{\prime}\otimes \mathbb{C}_{\lambda}$ for some irreducible representation
		$V^{\prime}$ of $\widetilde{L}_{\alpha}$ and for some character $\lambda$ of $\widetilde{B}_{\alpha}$.
	\end{enumerate}
\end{lem}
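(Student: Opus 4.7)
The plan is to reduce the statement to the classification of indecomposable modules over the Borel subgroup of $SL_{2}(\mathbb{C})$, which can be analysed by elementary $\mathfrak{sl}_{2}$-theory together with a central torus twist. Since $\alpha$ is a simple root, $\widetilde{L}_{\alpha}$ is a connected reductive group of semisimple rank one, so its derived subgroup $H:=[\widetilde{L}_{\alpha},\widetilde{L}_{\alpha}]$ is isomorphic to $SL_{2}(\mathbb{C})$. Letting $Z$ denote the connected component of the center of $\widetilde{L}_{\alpha}$, we have an almost direct product decomposition $\widetilde{L}_{\alpha}=Z\cdot H$, and correspondingly $\widetilde{B}_{\alpha}=Z\cdot B_{H}$, where $B_{H}\subset H$ is the Borel containing $\widetilde{T}\cap H$ and $U_{-\alpha}$.

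I would then let $V$ be a finite-dimensional indecomposable $\widetilde{B}_{\alpha}$-module and observe that the central torus $Z$ acts diagonally. An indecomposable representation of a torus is a single character, so $Z$ must act on all of $V$ through one character $\lambda_{Z}$. Factoring this out, the problem reduces to classifying indecomposable $B_{H}$-modules.

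For an indecomposable $B_{H}$-module $W$, I would decompose it into $\widetilde{T}$-weight spaces and use the fact that the unipotent radical $U_{-\alpha}\cong \mathbb{G}_{a}$ shifts $\widetilde{T}$-weights by $-\alpha$. Indecomposability then forces the $\widetilde{T}$-weights of $W$ to form a single unbroken string $\mu,\mu-\alpha,\dots,\mu-n\alpha$, with the $U_{-\alpha}$-action on the corresponding weight vectors being a single Jordan block. Taking $V'$ to be the irreducible $(n+1)$-dimensional $\widetilde{L}_{\alpha}$-module of highest weight $n\omega_{\alpha}$ (whose restriction to $B_{H}$ has exactly this shape) and letting $\lambda$ be the character of $\widetilde{B}_{\alpha}$ obtained by combining the $\widetilde{T}$-shift $\mu-n\omega_{\alpha}$ with $\lambda_{Z}$, one obtains the desired isomorphism $V\cong V'\otimes \mathbb{C}_{\lambda}$. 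Part (2) of the lemma follows from part (1) by pullback along the universal cover $\pi\colon\widetilde{G}\to G$, which identifies $B_{\alpha}$-modules with those $\widetilde{B}_{\alpha}$-modules on which the (finite) kernel of $\pi$ acts trivially, a condition that is preserved by the above decomposition.

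The main obstacle is the classification step for $B_{H}$: proving that indecomposability rules out gaps in the weight string and that the $U_{-\alpha}$-action has to consist of a single Jordan block rather than several blocks glued together at the level of the $\widetilde{T}$-action. This structural input is precisely the content of \cite[Corollary 9.1, p.130]{BKS}, so the body of the argument is really an assembly of that result with the splitting $\widetilde{L}_{\alpha}=Z\cdot H$ and the reduction from $B_{\alpha}$ to $\widetilde{B}_{\alpha}$.
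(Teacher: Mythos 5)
Your proposal is correct, and it ends up in the same place as the paper: the paper offers no proof of this lemma at all, simply recalling it from \cite[Corollary 9.1, p.130]{BKS}, and your argument likewise rests the decisive classification step (unbroken weight string, single Jordan block for the $U_{-\alpha}$-action) on that same corollary. The surrounding scaffolding you supply --- the almost direct product $\widetilde{L}_{\alpha}=Z\cdot H$ with $H\simeq SL_{2}(\mathbb{C})$, the central character argument, the identification of a weight string with $V'\otimes\mathbb{C}_{\lambda}$, and the reduction of (2) to (1) via the covering $\pi$ --- is sound and is a reasonable reconstruction of what the citation is doing.
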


Now onwards we will assume that $G=PSO(2n+1, \mathbb{C})(n\ge 3).$ 
Note that longest element $w_{0}$ of the Weyl group $W$ of $G$ is equal to $-id.$ We recall the following Proposition from \cite[Proposition 1.3, p.858]{YZ}. 

\begin{prop}
	Let $c\in W$ be a Coxeter element, let $\omega_{i}$ be the fundamental weight corresponding to the simple root $\alpha_{i}.$ Then there exists a least positive integer $h(i, c)$ such that
	$c^{h(i,c)}(\omega_{i})=w_{0}(\omega_{i}).$
\end{prop}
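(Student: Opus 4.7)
The plan is to leverage the fact that in type $B_{n}$ the longest element satisfies $w_{0}=-\mathrm{id}$, and to produce an explicit power of $c$ that realises $w_{0}$. Once any positive integer $k$ with $c^{k}(\omega_{i})=w_{0}(\omega_{i})$ is exhibited, the existence of a \emph{least} such integer follows at once from well-ordering of $\mathbb{Z}_{>0}$; so the whole task reduces to finding one such $k$, uniformly in $i$.

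First I would recall two standard facts about Coxeter elements: any two Coxeter elements of $W$ are conjugate, so they have a common order, which equals the Coxeter number $h$; and on the reflection representation $X(T)\otimes \mathbb{R}$, the eigenvalues of $c$ are precisely the numbers $\exp(2\pi \sqrt{-1}\, m_{j}/h)$, where $m_{1},\ldots ,m_{n}$ are the exponents of $W$. For $W$ of type $B_{n}$ we have $h=2n$ and the exponents are $1,3,5,\ldots ,2n-1$, all of which are odd.

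Next I would compute the eigenvalues of $c^{n}$: they are $\exp(\pi \sqrt{-1}\, m_{j})=(-1)^{m_{j}}=-1$ for every $j$, because each exponent is odd. Thus $c^{n}$ acts as $-\mathrm{id}$ on $X(T)\otimes \mathbb{R}$, and since $W$ acts faithfully there, we conclude $c^{n}=-\mathrm{id}=w_{0}$ inside $W$. In particular $c^{n}(\omega_{i})=-\omega_{i}=w_{0}(\omega_{i})$ for every $i$, so the set $\{k\in \mathbb{Z}_{>0}\colon c^{k}(\omega_{i})=w_{0}(\omega_{i})\}$ is non-empty (it contains $n$) and therefore possesses a least element, which is the desired $h(i,c)$.

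The main (mild) obstacle is the justification of the eigenvalue formula above, which normally appeals to the theory of the Coxeter plane and the eigenvalue theorem of Coxeter–Kostant. An elementary alternative that avoids this invokes the bipartite structure of the Dynkin diagram of type $B_{n}$: partitioning the set of simple reflections into two subsets of mutually commuting reflections produces a factorisation $c=c_{+}c_{-}$ with $c_{\pm}$ involutions, and a direct induction on $n$ using the Cartan matrix of $B_{n}$ shows $(c_{+}c_{-})^{n}=-\mathrm{id}$. Either route yields the key identity $c^{n}=w_{0}$, and the proposition is then immediate.
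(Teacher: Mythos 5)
Your argument is correct, but it takes a different route from the paper: the paper does not prove this proposition at all, it simply recalls it from Yang--Zelevinsky (Proposition 1.3 of \cite{YZ}), and it establishes the key identity $c^{n}=w_{0}$ separately (Lemma \ref{lemma 3.2}(1)) by transporting the corresponding statement for type $C_{n}$ from \cite{CK} through the isomorphism of the Weyl groups of $B_{n}$ and $C_{n}$. You instead prove $c^{n}=w_{0}$ directly: since all Coxeter elements are conjugate, the Coxeter number of $B_{n}$ is $h=2n$ and the exponents $1,3,\dots,2n-1$ are all odd, the Coxeter--Kostant eigenvalue theorem gives that $c^{n}$ has all eigenvalues $-1$ on the reflection representation, hence $c^{n}=-\mathrm{id}=w_{0}$ by faithfulness; the existence of a least $k$ with $c^{k}(\omega_{i})=w_{0}(\omega_{i})$ then follows from well-ordering since $k=n$ works. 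This is a complete and self-contained justification (valid in the paper's standing hypothesis $G=PSO(2n+1,\mathbb{C})$, where $w_{0}=-\mathrm{id}$), and as a bonus it reproves Lemma \ref{lemma 3.2}(1) without appealing to the type $C_{n}$ paper. Two minor points to tidy: the bipartite alternative you sketch produces $c_{+}c_{-}$ only for the particular bipartite Coxeter element, so you should add the one-line remark that $(-\mathrm{id})$ is central and Coxeter elements are conjugate to transfer $(c_{+}c_{-})^{n}=-\mathrm{id}$ to an arbitrary $c$; and note that your argument establishes existence of $h(i,c)$ but not its exact value, which is all the proposition asserts (the sharper formula in \cite{YZ} is not needed here).
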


\begin{lem}\label{lemma 3.2} Let $c\in W$ be a Coxeter element. Then we have 
	
	\begin{enumerate}
		\item[(1)] $w_{0}=c^n.$ 
		\item[(2)] For any sequence $\underline{i}^{r} (1\le r \le n )$  of reduced expressions of $c;$ the sequence $\underline{i}=(\underline{i}^1,\underline{i}^2,\dots,\underline{i}^r)$ is a reduced expression of $w_{0}.$
	\end{enumerate}
	
\end{lem}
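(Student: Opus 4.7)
The plan is to treat the two parts separately, with (2) following easily from (1) by a length count.

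For part (1), I would exploit two classical facts: all Coxeter elements in a Weyl group are conjugate, and for type $B_{n}$ the longest element $w_{0}=-\mathrm{id}$ is central in $W$. Together these reduce the statement $c^{n}=w_{0}$ to a verification for a single Coxeter element, since if $c'=gcg^{-1}$ then $(c')^{n}=g\,c^{n}\,g^{-1}=g\,w_{0}\,g^{-1}=w_{0}$. I would take the standard Coxeter element $c=s_{1}s_{2}\cdots s_{n}$ and compute its action on the usual orthonormal basis $e_{1},\ldots,e_{n}$ of the reflection representation of $B_{n}$, in which $s_{i}$ ($i<n$) swaps $e_{i}$ and $e_{i+1}$ while $s_{n}$ sends $e_{n}\mapsto -e_{n}$. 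A short direct calculation shows
\[
c(e_{i})=e_{i+1}\quad(1\le i\le n-1),\qquad c(e_{n})=-e_{1},
\]
so $c$ is a signed $n$-cycle of order $2n$, and applying it $n$ times gives $c^{n}(e_{i})=-e_{i}$ for every $i$. Hence $c^{n}=-\mathrm{id}=w_{0}$. (Alternatively, one can simply cite the general theorem that $c^{h/2}=w_{0}$ in any Weyl group that contains $-\mathrm{id}$, where $h$ is the Coxeter number; for $B_{n}$, $h=2n$.)

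For part (2), I would argue purely by lengths. The length of any Coxeter element equals $n$, the number of simple reflections, and the length of $w_{0}$ in type $B_{n}$ equals the number of positive roots, namely $n(n-1)+n=n^{2}$. Given any sequence $\underline{i}^{1},\ldots,\underline{i}^{n}$ of reduced expressions of $c$, the concatenated word $\underline{i}=(\underline{i}^{1},\underline{i}^{2},\ldots,\underline{i}^{n})$ has length $n\cdot n=n^{2}$, and by part (1) it represents the element $c^{n}=w_{0}$. A word of length $\ell(w_{0})$ that represents $w_{0}$ is automatically reduced, so $\underline{i}$ is a reduced expression of $w_{0}$.

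The only nontrivial step here is the verification in (1); everything in (2) is a formal consequence. Since the calculation of $c$ on the basis $\{e_{i}\}$ is completely mechanical, there is really no serious obstacle, and the main care needed is to record the conjugacy-invariance argument cleanly so that the computation for the single element $s_{1}s_{2}\cdots s_{n}$ suffices for all Coxeter elements.
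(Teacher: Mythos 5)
Your proof is correct, but it takes a different route from the paper. The paper disposes of this lemma in one line: since the Weyl groups of types $B_{n}$ and $C_{n}$ are isomorphic as Coxeter groups (via $s_{i}\mapsto s_{i}$), it simply imports the statement from the already-established $C_{n}$ case in the cited reference of Chary--Kannan. You instead give a self-contained argument: for (1) you use conjugacy of Coxeter elements together with the centrality of $w_{0}=-\mathrm{id}$ to reduce to the single element $s_{1}s_{2}\cdots s_{n}$, which you then check explicitly in the signed-permutation realization ($c(e_{i})=e_{i+1}$ for $i<n$, $c(e_{n})=-e_{1}$, hence $c^{n}=-\mathrm{id}$); for (2) you use the length count $\ell(w_{0})=n^{2}=n\cdot\ell(c)$ and the fact that a word of length $\ell(w)$ representing $w$ is automatically reduced. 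Both arguments are sound; the paper's is shorter but leans on an external source, while yours is verifiable from first principles and makes transparent exactly why the Coxeter number $h=2n$ and the relation $c^{h/2}=w_{0}$ are what drive the result. One cosmetic point: the statement as printed ends the concatenation at $\underline{i}^{r}$, which is a typo for $\underline{i}^{n}$; you correctly read it as the concatenation of all $n$ blocks.
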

\begin{proof}
	 Note that for $n\ge 3,$ there is an isomorphism of Weyl group of $B_{n}$ and Weyl group of $C_{n}$ sending $s_{i}\mapsto  s_{i}$ for $(1\le i\le n).$
	 Proof of the lemma holds in the case of type $C_{n}$ for ($n\ge 3$) (see \cite[Lemma 4.2, p.441]{CK}). Therefore lemma holds for type $B_{n}$ $(n\ge 3).$
	
\end{proof}

\begin{lem}
	Let $n\ge a_{1}>a_{2}> \dots >a_{r-1}>a_{r}\ge 1$ be a decreasing sequence of integers. Then,
	
	$w=(\prod\limits_{j=a_{1}}^{n} s_{j})(\prod\limits_{j=a_{2}}^{n} s_{j})\cdots (\prod\limits_{j=a_{r-1}}^{n}s_{j}) (\prod\limits_{j=a_{r}}^{n-1} s_{j})$
	is a reduced expression of $w.$
\end{lem}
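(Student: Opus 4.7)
The plan is to apply the standard reducedness criterion: a word $s_{p_1}\cdots s_{p_N}$ is reduced if and only if $(s_{p_1}\cdots s_{p_{j-1}})(\alpha_{p_j})>0$ for every $j$. I work in the usual realisation of the $B_n$ root system on $\mathbb{R}^n$, with $\alpha_i=\epsilon_i-\epsilon_{i+1}$ for $i<n$ and $\alpha_n=\epsilon_n$, so that $W$ acts by signed permutations of $\{\pm\epsilon_1,\ldots,\pm\epsilon_n\}$ and the positive roots are the $\epsilon_i$ together with the $\epsilon_i\pm\epsilon_j$ for $i<j$.

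Organise the given expression into its $r$ blocks and set $d_i:=[a_1,n][a_2,n]\cdots[a_{i-1},n]$, with $d_1$ the identity. The intermediate prefix just before the $k$-th simple reflection inside the $i$-th block equals $d_i$ (if $k=1$) or $d_i\cdot[a_i,a_i+k-2]$ (if $k\ge 2$), and that next simple reflection is $s_{a_i+k-1}$. Using the fact that $[a_i,a_i+k-2]$ cyclically permutes $\{\epsilon_{a_i},\ldots,\epsilon_{a_i+k-1}\}$ (sending $\epsilon_{a_i+k-1}\mapsto\epsilon_{a_i}$ and fixing $\epsilon_{a_i+k}$), one checks that $[a_i,a_i+k-2](\alpha_{a_i+k-1})$ equals $\epsilon_{a_i}-\epsilon_{a_i+k}$ when $a_i+k-1<n$ and equals $\epsilon_{a_i}$ when $a_i+k-1=n$; the $k=1$ contribution is just $\alpha_{a_i}$. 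So the positivity to be verified reduces to the two statements (a) $d_i(\epsilon_{a_i}-\epsilon_j)>0$ for every $a_i<j\le n$ and every $1\le i\le r$, and (b) $d_i(\epsilon_{a_i})>0$ for every $1\le i\le r-1$.

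The decisive observation is that each factor $[a_\ell,n]$ with $\ell<i$ fixes $\epsilon_k$ whenever $k<a_\ell$, and since $a_\ell>a_i$ for all such $\ell$, the product $d_i$ fixes $\epsilon_1,\ldots,\epsilon_{a_i}$ pointwise. This gives (b) immediately from $d_i(\epsilon_{a_i})=\epsilon_{a_i}$. For (a), the signed-permutation structure then forces $d_i$ to send each $\epsilon_j$ with $j>a_i$ to $\pm\epsilon_{j'}$ with $j'>a_i$; consequently $d_i(\epsilon_{a_i}-\epsilon_j)=\epsilon_{a_i}\mp\epsilon_{j'}$ is a positive root in both sign cases, because $a_i<j'$ makes $\epsilon_{a_i}-\epsilon_{j'}$ positive, while $a_i\ne j'$ makes $\epsilon_{a_i}+\epsilon_{j'}$ a positive root.

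The only genuine bookkeeping point I anticipate, and the reason (b) must be treated alongside (a), is the boundary $k=n-a_i+1$ with $i<r$, where the last simple reflection of the $i$-th block $[a_i,n]$ is $s_n$ and the associated root is $\alpha_n=\epsilon_n$ rather than a difference of two $\epsilon$'s. Apart from this edge case, the whole argument is a clean reading of the cyclic and signed-cyclic actions of $[a,n-1]$ and $[a,n]$ combined with the strict inequalities $a_1>a_2>\cdots>a_r$.
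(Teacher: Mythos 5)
Your proof is correct, but it takes a genuinely different route from the paper's. The paper disposes of this lemma in one line: it invokes the isomorphism of Coxeter groups $W(B_n)\cong W(C_n)$ sending $s_i\mapsto s_i$ (the root systems are dual), notes that reducedness is a purely Coxeter-theoretic notion and hence transfers, and cites the corresponding statement already proved for $PSp(2n,\mathbb{C})$ in \cite[Lemma 4.3]{CK}. You instead verify reducedness directly from the criterion that each prefix must send the next simple root to a positive root, working in the signed-permutation model of $W(B_n)$. Your computation is sound: the reduction of all the checks to the two statements (a) $d_i(\epsilon_{a_i}-\epsilon_j)>0$ for $a_i<j\le n$ and (b) $d_i(\epsilon_{a_i})>0$ is accurate (including the boundary letter $s_n$ of each block $i<r$, whose associated root $[a_i,n-1](\alpha_n)=\epsilon_{a_i}$ is exactly what (b) handles), and the key observation that $d_i$ fixes $\epsilon_1,\dots,\epsilon_{a_i}$ pointwise because each factor $[a_\ell,n]$ with $\ell<i$ only moves coordinates with index $\ge a_\ell>a_i$ settles both (a) and (b) cleanly, since a signed permutation fixing the first $a_i$ coordinates must send $\epsilon_j$ ($j>a_i$) to $\pm\epsilon_{j'}$ with $j'>a_i$, and $\epsilon_{a_i}\mp\epsilon_{j'}$ is positive either way. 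What the paper's approach buys is brevity and consistency with how it treats the neighbouring lemmas (all are imported from the $C_n$ paper by the same device); what yours buys is a self-contained, checkable argument that does not depend on an external reference, and which would also work verbatim if one wanted the analogous statement without access to the $C_n$ case.
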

\begin{proof}
Note that for $n\ge 3,$ there is an isomorphism of Weyl group of $B_{n}$ and Weyl group of $C_{n}$ sending $s_{i}\mapsto  s_{i}$ for $(1\le i\le n).$
Proof of the lemma holds in the case of type $C_{n}$ for ($n\ge 3$)  (see \cite[Lemma 4.3,p.441]{CK}). Therefore lemma holds for type $B_{n}$ $(n\ge 3).$
	
\end{proof}	

Let $c$ be a Coxeter element in $W.$ We take a reduced expression

$c=[a_{1}, n][a_{2}, a_{1}-1]\cdots [a_{k}, a_{k-1}-1],$ where $[i, j]=s_{i}s_{i+1}\cdots s_{j}$ for $i\le j$ and $n\ge a_{1}>a_{2}>\dots>a_{k}=1.$

Then we have following.
\begin{lem} \label{lemma 3.4}
	\begin{enumerate}
		
		\item[(1)]
		
		For all $1\le i \le k-1,$

		$c^i=(\prod\limits_{l_{1}=1}^{i}[a_{l_{1}}, n])(\prod\limits_{l_{2}=i+1}^{k}[a_{l_{2}},  a_{l_{2}-i}-1])(\prod\limits_{l_{3}=1}^{i-1}[a_{l_{k}}, a_{k-i+l_{3}}-1]).$
		
		\item[(2)] For all $k\le j\le n,$
		
		$c^{j}=(\prod\limits_{l_{1}=1}^{k-1}[a_{l_{1}}, n])([a_{k}, n]^{j+1-k})(\prod\limits_{l_{2}=1}^{k-1}[a_{k}, a_{l_{2}}-1]).$
		\item[(3)] The expressions of $c^i$ for $1\le i \le n$ as in $(1)$ and $(2)$ are reduced.
	\end{enumerate}
	
\end{lem}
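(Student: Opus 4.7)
The plan is to follow exactly the same pattern used in the proofs of the two preceding lemmas. For $n\ge 3$ there is an isomorphism between the Weyl groups of $B_n$ and $C_n$ sending $s_i\mapsto s_i$, and this isomorphism preserves the length function and therefore reduced expressions. Since the Coxeter element $c$ and its powers are expressed purely as products of simple reflections $s_i$, both the words appearing on the right-hand sides of (1) and (2) and the statement that they are reduced are transported verbatim across the isomorphism. The analogous lemma in type $C_n$ is established in \cite{CK} (the companion of the already-cited Lemmas 4.2 and 4.3), so the whole statement would follow at once by quoting that result.

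If instead one wanted a direct proof, I would proceed by induction on the exponent. The base case $i=1$ in (1) is the given reduced expression of $c$. For the inductive step $c^i\leadsto c^{i+1}$, multiply by $c=[a_1,n][a_2,a_1-1]\cdots[a_k,a_{k-1}-1]$ on the right and repeatedly apply the commutations $s_js_l=s_ls_j$ for $|j-l|\ge 2$ together with the braid relations $s_js_{j+1}s_j=s_{j+1}s_js_{j+1}$ (and the length-four braid involving $s_{n-1}, s_n$ where needed) in order to push each fresh block $[a_{l_2},a_{l_2-1}-1]$ to its claimed position. The transition from formula (1) to formula (2) at $i=k$ is exactly the moment when the tail index in the second product reaches $a_k-1=0$ and further multiplication by $c$ starts to contribute a full $[a_k,n]$ block, which is what the factor $[a_k,n]^{j+1-k}$ records.

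For part (3), the cleanest way is a length count. The word in (1) (resp.\ (2)) has exactly $in$ (resp.\ $jn$) simple reflections, since $\ell(c)=n$ in type $B_n$. By Lemma \ref{lemma 3.2}(1) we have $c^n=w_0$, and $\ell(w_0)=n^2$ in type $B_n$; writing $c^n=c^i\cdot c^{n-i}$ gives
\[
n^2=\ell(c^n)\le \ell(c^i)+\ell(c^{n-i})\le in+(n-i)n=n^2,
\]
so each inequality is an equality and $\ell(c^i)=in$ for every $1\le i\le n$, which forces the exhibited words to be reduced. The principal obstacle in a self-contained treatment is the delicate bookkeeping of the three nested products and the re-indexing $l_2-i$, $k-i+l_3$; the apparent typo in the third product of (1) is itself evidence of how easy it is to lose track of indices. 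Transporting the result through the $B_n\simeq C_n$ isomorphism and invoking \cite{CK} sidesteps this bookkeeping entirely, which is the route I would take.
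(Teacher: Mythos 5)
Your primary route is exactly the paper's proof: the authors simply observe that the Weyl groups of $B_n$ and $C_n$ are isomorphic as Coxeter systems via $s_i\mapsto s_i$ and quote the corresponding Lemma 4.4 of \cite{CK}, so your first paragraph reproduces their argument verbatim. The supplementary direct induction and the length count $\ell(c^i)=in$ are a correct (and more self-contained) bonus, but they go beyond what the paper records.
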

\begin{proof}
Note that for $n\ge 3,$ there is an isomorphism of Weyl group of $B_{n}$ and Weyl group of $C_{n}$ sending $s_{i}\mapsto  s_{i}$ for $(1\le i\le n).$
Proof of the lemma holds in the case of type $C_{n}$ for ($n\ge 3$)  (see \cite[Lemma 4.4, p.442]{CK}). Therefore lemma holds for type $B_{n}$ $(n\ge 3).$
\end{proof}

 \section{cohomology modules $H^1(w, \alpha_{j})$ where $j\neq n-1$ and $w\in W$ }
 
 In this section, we prove that $H^{1}(w, \alpha_{j})=0$
 for every $w\in W$ and $j\neq n-1.$
 
\begin{lem}\label{lemma 2}
Let $v\in W$  and $\alpha \in S$. Then $H^{1}(s_{j},H^0(v,\alpha))=0$ for $j\neq n.$
 		
 	\end{lem}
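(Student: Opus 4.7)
The plan is to recast the vanishing of $H^1(s_j, H^0(v, \alpha))$ as a structural statement about $H^0(v, \alpha)$ as a $B_{\alpha_j}$-module. By Lemma \ref{lemma 1.4}, every finite-dimensional $B_{\alpha_j}$-module splits as a direct sum of indecomposables of the form $V' \otimes \mathbb{C}_\lambda$ with $V'$ an irreducible $\tilde L_{\alpha_j}$-module; by Lemma \ref{lemma1.3}(2) and (4), the cohomology $H^1(L_{\alpha_j}/B_{\alpha_j}, V' \otimes \mathbb{C}_\lambda)$ vanishes whenever $\langle \lambda, \alpha_j \rangle \geq -1$. So my goal reduces to showing that every indecomposable $B_{\alpha_j}$-summand of $H^0(v, \alpha)$ satisfies $\langle \lambda, \alpha_j \rangle \geq -1$.

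I would proceed by induction on $\ell(v)$. The base case $v = e$ is immediate: $H^0(e, \alpha) = \mathbb{C}_\alpha$ is one-dimensional, and since $\alpha_j$ is long for $j \neq n$, the Cartan matrix of type $B_n$ forces $\langle \alpha, \alpha_j \rangle \in \{2, 0, -1\}$ (the unique $-2$ entry $\langle \alpha_{n-1}, \alpha_n \rangle$ requires $j = n$, which is excluded).

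For the inductive step, I would write $v = s_\gamma v'$ with $\ell(v) = \ell(v') + 1$, so that $H^0(v, \alpha) = H^0(s_\gamma, H^0(v', \alpha))$ by the $SES$ (1). If $\alpha_j$ is a left descent of $v$, I choose $\gamma = \alpha_j$; then $H^0(v, \alpha)$ is $P_{\alpha_j}$-equivariant, hence a direct sum of irreducible $\tilde L_{\alpha_j}$-modules. Each summand, read through Lemma \ref{lemma 1.4}, has $\lambda$ a central character of $L_{\alpha_j}$, so $\langle \lambda, \alpha_j \rangle = 0$ and the claim is immediate in this case.

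The hard part will be the remaining case $\gamma \neq \alpha_j$. Here I would decompose $H^0(v', \alpha)$ into its $B_\gamma$-indecomposable summands via Lemma \ref{lemma 1.4}, use Lemma \ref{lemma1.3} to compute $H^0(s_\gamma, -)$ on each summand as an $L_\gamma$-module, and then determine the $B_{\alpha_j}$-indecomposable decomposition of the resulting $P_\gamma$-module. The subtlety is that this $L_\gamma$-decomposition need not be $B$-stable: since $\gamma \neq \alpha_j$, the root subgroup $U_{-\alpha_j}$ lies in the unipotent radical of $P_\gamma$, and its action can link weights from different $L_\gamma$-summands into a single $B_{\alpha_j}$-indecomposable piece of rank $\geq 2$. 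The main obstacle will be verifying that such merged pieces always satisfy $\langle \lambda, \alpha_j \rangle \geq -1$; individual weights with pairing $-2$ do arise (typically at the bottom of the composition series from Lemma \ref{lemma 1.1}(3) applied to the intermediate $H^0(s_\gamma, \mathbb{C}_\mu)$), and one must show they are always linked by $U_{-\alpha_j}$ to a compensating weight of larger pairing from a neighbouring $L_\gamma$-summand. I expect this to follow by combining the inductive hypothesis on $v'$ with the type $B_n$ constraint that for $j \neq n$ the only way to produce a Cartan pairing of $-2$ with $\alpha_j$ is through the single long-short entry $\langle \alpha_{n-1}, \alpha_n \rangle$, which tightly controls which bad weights can occur and how they must be paired.
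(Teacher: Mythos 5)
Your opening reduction is sound and coincides with the paper's: by Lemma \ref{lemma 1.4} and Lemma \ref{lemma1.3} it suffices to show that no indecomposable $\tilde{B}_{\alpha_j}$-summand $V'\otimes\mathbb{C}_{\lambda}$ of $H^0(v,\alpha)$ has $\langle\lambda,\alpha_j\rangle\le -2$. But the proposal does not actually establish this. The whole content sits in the case you yourself label the hard part ($\gamma\neq\alpha_j$), and there you only describe what would have to be verified and then write ``I expect this to follow.'' That expectation is exactly the statement to be proved: nothing in your sketch rules out that a weight $\mu$ with $\langle\mu,\alpha_j\rangle=-2$ splits off as a one-dimensional $B_{\alpha_j}$-summand $\mathbb{C}_{\mu}$ (rather than being glued by $U_{-\alpha_j}$ to a compensating weight), and such a summand would give $H^1(s_j,\mathbb{C}_{\mu})\neq 0$. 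Tracking how the $B_{\alpha_j}$-indecomposable decomposition evolves under $H^0(s_{\gamma},-)$ for $\gamma\neq\alpha_j$ is genuinely delicate, and the induction as set up gives you no handle on it. So the argument has a real gap at its only nontrivial step.

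The paper sidesteps the induction entirely by a structural observation you never invoke. Having disposed of $\alpha=\alpha_n$ via the known vanishing $H^1(w,\alpha_n)=0$, one may assume $\alpha$ is long; choosing $w$ with $w(\alpha)=\alpha_0$ embeds $H^0(v,\alpha)$ into $H^0(vw,\alpha_0)$, a quotient of $\mathfrak{g}=H^0(w_0,\alpha_0)$, so \emph{every weight of $H^0(v,\alpha)$ is a root or zero}. If some summand $V'\otimes\mathbb{C}_{\lambda}$ had $\langle\lambda,\alpha_j\rangle\le -2$, its lowest weight $\mu'=\mu_1+\lambda$ would satisfy $\langle\mu',\alpha_j\rangle\le\langle\lambda,\alpha_j\rangle\le -2$; but the pairing of any root with the coroot of the long simple root $\alpha_j$ ($j\neq n$) lies in $\{-1,0,1\}$ apart from $\pm\alpha_j$ itself. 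This one numerical fact replaces your entire inductive bookkeeping. (The residual weight $\mu'=-\alpha_j$ --- whether $\mathbb{C}_{-\alpha_j}$ can occur as a summand on its own --- is the single point that still needs care, and it is the same difficulty you flagged; the root-theoretic framing at least confines it to that one weight instead of leaving it spread over every step of an induction.) If you want to salvage your approach, you would need to prove, not assume, the linking statement in the $\gamma\neq\alpha_j$ case; adopting the paper's observation that the weights are roots is the efficient way to do so.
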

 	\begin{proof}
	
	By \cite[Corollary 5.6, p.778]{Ka}  we have $H^{1}(w, \alpha_{n})=0.$  Therefore, we may assume that $\alpha$ is a long simple root. If $H^{1}(s_{j},H^0(v,\alpha))_{\mu}\neq0,$ then there exists an indecomposable $\tilde{L}_{\alpha_{j}}$-summand $V$ of $H^0(v,\alpha)$ such that $H^1(s_{j}, V)_{\mu}\neq 0.$ By Lemma 2.4, we have $V\simeq V'\otimes \mathbb{C}_{\lambda} $ for some character $\lambda$ of $\tilde{B}_{\alpha_{j}}$ and for some irreducible $\tilde{L}_{\alpha_{j}}$-module $V'.$
 		Since $H^1(s_{j}, V)_{\mu}\neq 0$ from Lemma \ref {lemma1.3}(3) we  have $\langle \lambda, \alpha_{j} \rangle\le -2$. Since $\alpha$ is a long root, there exists $w\in W$ such that $w(\alpha)=\alpha_{0}$. Thus $H^0(v, \alpha)\subseteq H^0(vw, \alpha_{0}).$ Again, since $\alpha_{0}$ is highest long root,  $H^0(w_{0}, \alpha_{0})=\mathfrak{g}\longrightarrow H^0(vw, \alpha_{0})$ is surjective. 
Let $\mu'$ be the lowest weight of $V.$ Then by the above argument $\mu'$ is a root. Therefore we have $\mu'=\mu_{1}+\lambda,$ where $\mu_{1}$ is the lowest weight of $V'.$ Hence, we have 
 		$\langle \mu', \alpha_{j} \rangle\le -2$. Since $\alpha_{j}$ is a long root and $\mu'$ is a root, we have $\langle \mu', \alpha_{j} \rangle =-1 , 0, 1.$  This is a contradiction. Thus we have $H^{1}(s_{j},H^0(v,\alpha))_{\mu}=0.$ 
 	\end{proof}

 	\begin{lem}\label{lemma 2.1}
 Let $v\in W$ and $\alpha_{j} \in S$ be such that $j\ne n-1.$ Then we have \\
 	$H^{1}(s_{k},H^0(v,\alpha_{j}))=0,$ for every $k= 1,2,\dots,n.$
 		
 	\end{lem}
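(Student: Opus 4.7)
The plan is as follows. By Lemma 2 the case $k \ne n$ is already settled, so only $k = n$ needs attention. When $j = n$, Kannan's vanishing $H^1(w, \alpha_n) = 0$ (valid for every $w \in W$) combined with the injection $H^1(s_n, H^0(v, \alpha_n)) \hookrightarrow H^1(s_n v, \alpha_n)$ coming from part (2) of SES (applied with $\gamma = \alpha_n$ and $w$ replaced by $s_n v$) does the job immediately.

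For $k = n$ and $1 \le j \le n-2$ (so that $\alpha_j$ is long and orthogonal to $\alpha_n$), I would mirror the weight-theoretic argument of Lemma 2. Assuming $H^1(s_n, H^0(v, \alpha_j))_\mu \ne 0$ for some weight $\mu$, Lemma 2.4 produces an indecomposable $\widetilde{B}_{\alpha_n}$-summand $V \simeq V' \otimes \mathbb{C}_\lambda$ with $H^1(s_n, V)_\mu \ne 0$, and Lemma 1.3(3) forces $\langle \lambda, \alpha_n \rangle \le -2$. The embedding device used in Lemma 2 (since $\alpha_j$ is long, choose $w \in W$ with $w(\alpha_j) = \alpha_0$ and combine $H^0(v, \alpha_j) \subseteq H^0(vw, \alpha_0)$ with the surjection $\mathfrak{g} = H^0(w_0, \alpha_0) \twoheadrightarrow H^0(vw, \alpha_0)$) ensures that every $T$-weight of $H^0(v, \alpha_j)$ is a root of $\mathfrak{g}$ or zero. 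The new ingredient beyond Lemma 2 is that $\alpha_n$ is the short simple root in type $B_n$: $\langle \beta, \alpha_n \rangle \in \{-2, 0, 2\}$ for every root $\beta$. Applying this constraint to all weights of $V' \otimes \mathbb{C}_\lambda$ in conjunction with $\langle \lambda, \alpha_n \rangle \le -2$ forces $V'$ to be trivial and $\langle \lambda, \alpha_n \rangle = -2$, so $V \simeq \mathbb{C}_\lambda$ with $\lambda$ a root satisfying $\langle \lambda, \alpha_n \rangle = -2$.

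The main obstacle is the last step: excluding such a $1$-dimensional direct summand $\mathbb{C}_\lambda$ when $1 \le j \le n-2$. The roots with $\alpha_n$-pairing $-2$ are precisely $\{e_i - e_n,\ -e_i - e_n : 1 \le i \le n-1\} \cup \{-e_n\}$. The requirement that $\alpha_j - \lambda$ be a non-negative integer combination of simple roots rules out $\lambda = e_i - e_n$ immediately, since this would force $i = j = n-1$, excluded by hypothesis. For the remaining candidates $\lambda \in \{-e_n\} \cup \{-e_i - e_n : 1 \le i \le n-1\}$ I would argue that the weight $\lambda + \alpha_n$ (which has $\alpha_n$-pairing $0$) is also a weight of $H^0(v, \alpha_j)$, and that $U_{-\alpha_n}$ carries $v_{\lambda + \alpha_n}$ non-trivially onto $v_\lambda$; consequently $\mathbb{C}_\lambda$ cannot be an indecomposable direct summand but must lie inside a larger $\widetilde{B}_{\alpha_n}$-indecomposable whose shift has $\alpha_n$-pairing $0$, and Lemma 1.3(2) kills $H^1$ of that summand, contradicting the assumption. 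This is exactly where the hypothesis $j \ne n-1$ is indispensable: for $j = n-1$ the generating weight $\alpha_{n-1}$ itself already has $\alpha_n$-pairing $-2$ and produces a genuine $1$-dimensional summand, so the vanishing genuinely fails in that case.
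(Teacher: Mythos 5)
Your reduction is sound and follows the paper's strategy up to a point: $k\neq n$ via the preceding lemma, $j=n$ via Kannan's vanishing $H^1(w,\alpha_n)=0$ and the SES injection (with the harmless caveat that when $l(s_nv)=l(v)-1$ the module $H^0(v,\alpha_j)$ is already a $P_{\alpha_n}$-module and the $H^1$ vanishes trivially), and for $1\le j\le n-2$ the observation that $\langle\beta,\alpha_n\rangle\in\{-2,0,2\}$ for every root $\beta$ forces any offending indecomposable $\widetilde{B}_{\alpha_n}$-summand to be one-dimensional, $V\simeq\mathbb{C}_\lambda$ with $\lambda$ a root and $\langle\lambda,\alpha_n\rangle=-2$. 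This cleanly isolates what must be excluded, and your elimination of $\lambda=\beta_i=e_i-e_n$ via $\lambda\le\alpha_j$ is correct.

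The genuine gap is the final step, which you flag as "the main obstacle" and then only assert: that for $\lambda\in\{-\alpha_n\}\cup\{-(\beta_i+2\alpha_n):1\le i\le n-1\}$ the vector of weight $\lambda$ cannot span a one-dimensional direct summand because $\lambda+\alpha_n$ is also a weight and $U_{-\alpha_n}$ links the two weight vectors. This is precisely the content of the paper's Step 1 and Step 2, and it is where almost all of the work lies: Step 1 shows by a separate argument that $-(\alpha_{n-1}+2\alpha_n)$ (the case $i=n-1$) is never a weight of $H^0(v,\alpha_j)$, and Step 2 proves, by an induction on $l(v)$ with a case analysis over the simple reflection $s_t$ with $l(s_tv)=l(v)-1$ (the cases $t=n$, $t=n-1$, $t=i$, $t=i\pm 1$, and $t$ orthogonal all behave differently), that whenever $-(\beta_i+2\alpha_n)$ is a weight so is $-(\beta_i+\alpha_n)$, after which the two weight spaces assemble into a summand $V\otimes\mathbb{C}_{-\omega_n}$ killed by Lemma \ref{lemma1.3}(4). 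Nothing in your proposal substitutes for this induction; "I would argue that $\lambda+\alpha_n$ is also a weight" is exactly the statement that needs proof. Two further points would need attention even granting the string claim: for $\lambda=-\alpha_n$ you have $\lambda+\alpha_n=0$, and the zero weight space (spanned by coweights $h(\alpha_i)$) does not obviously feed into $\mathbb{C}_{-\alpha_n}$ under $U_{-\alpha_n}$ --- the paper instead rules out $-\alpha_n$ as a weight altogether using the fact that every nonzero weight of $H^0(v,\alpha_j)$ is $\alpha_j$, $0$, or $\le-\alpha_j$; and for $i=n-1$ your proposed string partner $-(\beta_{n-1}+\alpha_n)=-e_{n-1}$ need not be available, which is why the paper handles that case by a nonexistence argument (Step 1) rather than by pairing.
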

 	\begin{proof}
 		
 		\underline {\bfseries{Step 1:}} $H^0(v,\alpha_{j})_{-(\alpha_{n-1} + 2\alpha_{n})}=0.$ \\
 		{\bfseries Case 1:}\\
 		Assume that	$j=n,$ choose an element $u\in W$ of  minimal length such that $u^{-1}(\alpha_{n})=\beta_{0},$ the highest short root. Then we have $H^{0}(v, \alpha_{j})\subseteq H^0(vu, \beta_{0}).$

 		Since $\beta_{0}$ is dominant weight  the natural restriction map
 		$$H^0(w_{0}, \beta_{0})\longrightarrow H^0(vu,\beta_{0})$$
 		is surjective. 
 		
 		Hence $H^0(v, \alpha_{j})_{\mu}\neq0$ implies either $\mu=0$ or $\mu$ is a short root. \\
 		Therefore, we have $H^0(v, \alpha_{j})_{-(\alpha_{n-1} + 2 \alpha_{n})}=0.$
 		\\
 		{\bf Case 2:} \\ Assume that $1\le j \le n-2.$ Note that if $H^0(v,\alpha_{j})_{\mu}\neq 0$ then either $\mu= \alpha_{j}, 0$ or  
 		$\mu\le -\alpha_{j}$ (see \cite[Corollary 4.5, p.678]{CK}).

 		Hence $H^0(v, \alpha_{j})_{-(\alpha_{n-1} + 2\alpha_{n})}=0.$

 		\underline{\bfseries{Step 2:}} If $H^0(v, \alpha_{j})_{-(\beta_{i}+2\alpha_{n})}\ne 0$ for some $1\le i\le n-2,$ then $H^0(v,\alpha_{j})_{-(\beta_{i}+ \alpha_{n})}\ne 0.$ 
 		
 		Proof of Step 2: If $v=id,$ we are done.
 		So choose $1\le t\le n$ such that $l(s_{t}v)=l(v)-1.$
 		Let $v'=s_{t}v.$  Then $H^0(v, \alpha_{j})=H^0(s_{t}, H^0(v', \alpha_{j})).$\\
 		
 		{\bf Case 1:}  Assume that $t=n.$  In this case $\langle -(\beta_{i} + 2 \alpha_{n}), \alpha_{t} \rangle = -2.$
 		If $H^0(v, \alpha_{j})_{-(\beta_{i} + 2\alpha_{n})} \neq 0$, then there is an indecomposable $B_{\alpha_{n}}$-summand $V$ of $H^0(v', \alpha_{j})$ with highest weight $-\beta_{i}.$ Since $\langle -\beta_{i}, \alpha_{n} \rangle= 2,$  we have $H^0(s_{t}, V)_{-(\beta_{i} + \alpha_{n})}\neq 0.$
 		Therefore we have $H^0(v, \alpha_{j})_{-(\beta_{i} + \alpha_{n})}\neq0.$
 		\\
 		{\bf Case 2:} Assume that $t=n-1.$ In this case $\langle -(\beta_{i} + 2 \alpha_{n}), \alpha_{t} \rangle = 1.$ If $H^0(v, \alpha_{j})_{-(\beta_{i} + 2\alpha_{n})} \neq 0$, then there is an indecomposable $B_{\alpha_{n}}$-summand $V$ of  $H^0(v' , \alpha_{j})$ with highest weight $-(\beta_{i} + 2\alpha_{n}).$
 		Thus by induction hypothesis we have $H^0(v', \alpha_{j})_{-(\beta_{i} + \alpha_{n})}\neq 0.$
 		Since $\langle -(\beta_{i} + \alpha_{n}), \alpha_{t}\rangle=0,$ we have 
 		$H^0(v, \alpha_{j})_{-(\beta_{i} + \alpha_{n})}\neq 0.$
 		\\{\bfseries Case 3:} Assume that $1\le t\le n-2.$
 		In this case $\langle -(\beta_{i} + 2 \alpha_{n}) , \alpha_{t} \rangle = -1, 0 $ or $1.$
 		\\
 		
 		Assume that $i=t.$ Then we have $\langle -(\beta_{i} + 2 \alpha_{n}) , \alpha_{t} \rangle = -1.$  If further $H^0(v , \alpha_{j})_{-(\beta_{i} + 2\alpha_{n})}\neq 0$,  then there is an indecomposable $B_{\alpha_{t}}$-summand $V$ of $H^0(v', \alpha_{j})$ 
 		with highest weight $-(\beta_{i+1} + 2\alpha_{n}).$
 		Therefore we have $H^0(v', \alpha_{j})_{-(\beta_{i+1} + 2\alpha_{n})}\neq 0.$ It is clear from Step: 1 that $t+1\le n-2$. Therefore by induction 
 		$H^0(v', \alpha_{j})_{-(\beta_{t+1} + \alpha_{n})}\neq 0.$ 
 		Since $\langle -(\beta_{t+1} + \alpha_{n}) , \alpha_{t} \rangle = 1,$ we have $H^0(v ,\alpha_{j})_{-(\beta_{t} + \alpha_{n})}\neq 0.$ 
 		
 	Assume that $1\le t \le i-2$ or  $ i+1\le t\le n-2.$ Then we have $\langle -(\beta_{i} + 2 \alpha_{n}) , \alpha_{t} \rangle = 0.$ Thus $H^0(v ,\alpha_{j})_{-(\beta_{t} + 2\alpha_{n})} = H^0(v',\alpha_{j})_{-(\beta_{t} + 2\alpha_{n})}\neq 0.$ Therefore by induction 
 		$H^0(v ,\alpha_{j})_{-(\beta_{t} + \alpha_{n})}\neq 0.$ Since $\langle -(\beta_{i} + \alpha_{n}), \alpha_{t}\rangle=0,$ we have $H^0(v ,\alpha_{j})_{-(\beta_{i} + 2\alpha_{n})}\neq 0.$
 		
 Assume that $i=t+1.$ Since $\langle -(\beta_{i} + \alpha_{n}) , \alpha_{t} \rangle = 1,$ then there is an indecomposable $B_{\alpha_{t}}$-summand $V$ of $H^0(v', \alpha_{j})$ 
 such that: $V=\mathbb{C}_{-(\beta_{i}+ 2\alpha_{n})}\oplus \mathbb{C}_{-(\beta_{i-1} + 2\alpha_{n})}$ or $V=\mathbb{C}_{-(\beta_{i} + 2\alpha_{n})}.$
 Then we have $H^0(v', \alpha_{j})_{-(\beta_{t} + 2\alpha_{n})}\neq 0.$  Therefore by induction 
 $H^0(v', \alpha_{j})_{-(\beta_{i} + \alpha_{n})}\neq 0.$ 
 Since $\langle -(\beta_{t} + \alpha_{n}) , \alpha_{t} \rangle = 1$, we have $H^0(v ,\alpha_{j})_{-(\beta_{i} + \alpha_{n})}\neq 0$
 Hence the proof of Step 2.
 		
 		{\bf Proof of Lemma :}
 		\\
 		{\bf Case 1:}   Assume that $k\ne n.$ Then by Lemma \ref {lemma 2} we have $H^1(s_{k}, H^0(v, \alpha_{j}))=0.$

 		{\bf Case 2:}  Assume that $k = n.$
 		By Step 1 we see that $H^0(v, \alpha_{j})_{-(\alpha_{n-1} + 2\alpha_{n})}= 0.$ Note that if $\beta$ is a root such that $H^0(v, \alpha_{j})_{\beta}\neq0$ and $\langle \beta , \alpha_{n}\rangle=-2,$ then we have $\beta=-(\beta_{i} + 2\alpha_{n})$ for some $1\le i\le n-2.$

 		 By Step 2 if $H^0(v,\alpha_{j})_{-(\beta_{i} + 2\alpha_{n})}\neq 0$ for some $1\le i\le n-2$, then $H^0(v,\alpha_{j})_{-(\beta_{i} + \alpha_{n})}\neq 0.$
 		Therefore $\mathbb{C}_{-(\beta_{i}+ \alpha_{n})}\oplus \mathbb{C}_{-(\beta_{i} + 2\alpha_{n})}$ is an indecomposable $B_{\alpha_{n}}$-summand of $H^0(v,\alpha_{j}).$ By Lemma \ref {lemma 1.4}, $\mathbb{C}_{-(\beta_{i}+ \alpha_{n})}\oplus \mathbb{C}_{-(\beta_{i} + 2\alpha_{n})}$ is isomorphic to $V\otimes \mathbb{C}_{-\omega_{n}}$, where $V$ is an irreducible $\tilde{L}_{\alpha_{n}}$-module.
 		Therefore by Lemma \ref {lemma1.3}(4) we have $H^1(s_{k},  \mathbb{C}_{-(\beta_{i}+ \alpha_{n})}\oplus \mathbb{C}_{-(\beta_{i} + 2\alpha_{n})} )=0.$ 
 		Thus our result follows.
 	\end{proof}

\begin{lem}\label{lemma 2.5}
 		Let $w$ be an element of $W$ and $\alpha_{j}$ be an element of $S$ such that $j\neq n-1.$ Then $H^1(w,\alpha_{j})=0.$
\end{lem}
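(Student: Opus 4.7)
The plan is to prove this by induction on the length $\ell(w)$ of $w$, using the short exact sequence (SES) recalled in Section~2 together with the vanishing result already established in Lemma \ref{lemma 2.1}.

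For the base case $w = \mathrm{id}$, the variety $X(\mathrm{id})$ is a point, so $H^1(\mathrm{id}, \alpha_j) = 0$ trivially. For the inductive step, assume the statement holds for all elements of length less than $\ell(w)$. Since $\ell(w) \geq 1$, we can pick a simple reflection $s_k$ such that $\ell(s_k w) = \ell(w) - 1$; set $v = s_k w$. Applying the second part of the SES with $V = \mathbb{C}_{\alpha_j}$ and $\gamma = \alpha_k$, we obtain the short exact sequence
\begin{equation*}
0 \to H^1(s_k, H^0(v, \alpha_j)) \to H^1(w, \alpha_j) \to H^0(s_k, H^1(v, \alpha_j)) \to 0.
\end{equation*}

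By the inductive hypothesis $H^1(v, \alpha_j) = 0$, so the rightmost term vanishes. By Lemma \ref{lemma 2.1}, since $j \neq n-1$, the leftmost term $H^1(s_k, H^0(v, \alpha_j))$ also vanishes for every $k \in \{1, 2, \ldots, n\}$. Therefore $H^1(w, \alpha_j) = 0$, completing the induction.

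The main work has already been carried out in Lemma \ref{lemma 2.1}; the present lemma is a clean inductive consequence, so I do not anticipate any real obstacle beyond invoking the SES correctly and noting that the hypothesis $j \neq n-1$ is preserved throughout the induction (since $\alpha_j$ does not change). The use of Lemma \ref{lemma 2.1} is crucial precisely because $\alpha_j$ need not be a short root when $G$ is of type $B_n$, so one cannot appeal directly to \cite[Corollary 5.6]{Ka} as in the simpler type $C_n$ situation.
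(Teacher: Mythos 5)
Your proof is correct and is essentially identical to the paper's: both argue by induction on $\ell(w)$, use the SES to reduce to the two outer terms, kill the right-hand term $H^0(s_k, H^1(v,\alpha_j))$ by the inductive hypothesis, and kill the left-hand term $H^1(s_k, H^0(v,\alpha_j))$ by Lemma \ref{lemma 2.1}. No issues.
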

 	\begin{proof}
		We will prove by induction on length of $w$. If length of $w$ is $0$, then $w=id.$ Thus it follows trivially.
 		Now suppose $w\in W$ such that $l(w)\ge 1.$ Then there exists a simple root $\alpha \in S$ such that $l(s_{\alpha}w)=l(w)-1.$
 		Then using SES:
 		\begin{center}
 			$0\longrightarrow H^1(s_{\alpha}, H^0(s_{\alpha}w, \alpha_{j}))\longrightarrow H^1(w, \alpha_{j})\longrightarrow  H^0(s_{\alpha}, H^1(s_{\alpha}w, \alpha_{j}))\longrightarrow 0.$
 		\end{center}
 		From the above SES using induction hypothesis and Lemma \ref{lemma 2.1}, we get $ H^1(w, \alpha_{j}))=0$ for $j\neq n-1.$

 \end{proof}

\section{cohomology module $H^0$ of the relative tangent bundle}
In this section we describe the weights of $H^0$ of the relative tangent bundle.

Notation:

Let $c$ be a Coxeter element of $W.$ We take a reduced expression $c=[a_{1}, n][a_{2}, a_{1}-1]\cdots[a_{k}, a_{k-1}-1],$ where $[i, j]=s_{i}s_{i+1}\cdots s_{j}$ for $i\le j$ and $n\ge a_{1}>a_{2}>\cdots >a_{k}=1.$

Let $\beta_{i} =\alpha_{i}+\alpha_{i+1}+\cdots+\alpha_{n-1}$ for all $1\le i\le n-1.$

For $1\le r\le k,$ let $n\ge a_{1}>a_{2}>a_{3}>\dots>a_{r}\ge 1$ be a decreasing sequence of integers.

Let

$w_{r}=(\prod\limits_{j=a_{1}}^{n}s_{j})(\prod\limits_{j=a_{2}}^{n}s_{j})(\prod\limits_{j=a_{3}}^{n}s_{j})\cdots(\prod\limits_{j=a_{r-1}}^{n}s_{j})(\prod\limits_{j=a_{r}}^{n-1}s_{j})$

and let

$\tau_{r}=(\prod\limits_{j=a_{1}}^{n}s_{j})(\prod\limits_{j=a_{2}}^{n}s_{j})(\prod\limits_{j=a_{3}}^{n}s_{j})\cdots(\prod\limits_{j=a_{r-1}}^{n}s_{j})(\prod\limits_{j=a_{r}}^{n-2}s_{j}).$

Note that $l(w_{r})=l(\tau_{r}) + 1.$

\begin{lem}\label{lemma 5.1}
Assume that $r\ge 3.$

\begin{enumerate}
\item[(1)] Let $v= s_{a_{r-1}}s_{a_{r-1}+1}\cdots s_{n}s_{a_{r}}s_{a_{r}+1}\cdots s_{n-1}.$ Then we have 
	
	$H^0(v,\alpha_{n-1})=\bigoplus\limits_{i=a_{r}}^{a_{r-1}-1}(\mathbb{C}_{-(\beta_{i} + \alpha_{n})} \oplus \mathbb{C}_{-(\beta_{i} + 2\alpha_{n})}\oplus\mathbb{C}_{-(\beta_{i} + 2\alpha_{n} + \beta_{n-1})} \oplus \cdots \oplus\mathbb{C}_{-(\beta_{i} + 2\alpha_{n} + \beta_{a_{r-1}})})$

	$\bigoplus\limits_{i=a_{r-1}}^{n-2} 
	(\mathbb{C}_{-(\beta_{i} + 2\alpha_{n})}\oplus \mathbb{C}_{-(\beta_{i} + 2\alpha_{n}+\beta_{n-1})}\oplus \cdots \oplus \mathbb{C}_{-(\beta_{i} + 2\alpha_{n} + \beta_{i+1})})\oplus \mathbb{C}_{-(\beta_{n-1} + 2\alpha_{n})}
	.$
	
\item[(2)] Let $v'=s_{1}\cdots s_{n}s_{1}\cdots s_{n-1}.$ Then we have

$H^0(v', \alpha_{n-1})=\bigoplus\limits_{i=1}^{n-2} 
(\mathbb{C}_{-(\beta_{i} + 2\alpha_{n})}\oplus \mathbb{C}_{-(\beta_{i} + 2\alpha_{n}+\beta_{n-1})}\oplus \cdots \oplus \mathbb{C}_{-(\beta_{i} + 2\alpha_{n} + \beta_{i+1})})\oplus \mathbb{C}_{-(\beta_{n-1} + 2\alpha_{n})}
.$

\end{enumerate}	

\end{lem}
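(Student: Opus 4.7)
The plan is to compute $H^0(v, \alpha_{n-1})$ by iterating the identity $H^0(us_\alpha, \lambda) = H^0(u, H^0(s_\alpha, \lambda))$ from Lemma \ref{lemma 1.2}(1), peeling off one simple reflection at a time from the right of the reduced expression of $v$, and using Lemma \ref{lemma 1.1} to describe $H^0(s_\alpha, -)$ on each intermediate weight, together with Lemma \ref{lemma 1.4} for the indecomposable $B_\alpha$-module structure when several weights combine.

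For part (1), I would first handle the inner factor $[a_r, n-1]$. Since $\langle \alpha_{n-1}, \alpha_{n-1}\rangle = 2$, Lemma \ref{lemma 1.1}(3) applied at the rightmost reflection $s_{n-1}$ yields composition factors with weights $\alpha_{n-1}, 0, -\alpha_{n-1}$. Moving leftward through $s_{n-2}, s_{n-3}, \ldots, s_{a_r}$, at each step the current lowest weight pairs as $+1$ with the reflecting simple root while all other weights pair as $0$, so by Lemma \ref{lemma 1.1} exactly one new weight $-\beta_j$ is appended at each stage. The outcome is a $B$-module whose weights are $\alpha_{n-1}, 0, -\beta_{n-1}, -\beta_{n-2}, \ldots, -\beta_{a_r}$.

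Next I would compose with $[a_{r-1}, n]$. Peeling off $s_n$ first, each negative weight $-\beta_i$ (for $a_r \le i \le n-1$) satisfies $\langle -\beta_i, \alpha_n\rangle = 2$ and produces, by Lemma \ref{lemma 1.1}(3), a three-weight string $-\beta_i, -\beta_i - \alpha_n, -\beta_i - 2\alpha_n$; the weights $\alpha_{n-1}$ and $0$ behave separately and will be eliminated after sufficiently many subsequent reflections. Continuing with $s_{n-1}, s_{n-2}, \ldots, s_{a_{r-1}}$, each weight $-\beta_i - 2\alpha_n$ grows a string of extensions of the form $-(\beta_i + 2\alpha_n + \beta_j)$ for an appropriate range of $j$, while weights pairing as $-1$ vanish by Lemma \ref{lemma 1.2}(4). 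The split between the two batches emerges from whether the index $i$ is smaller than $a_{r-1}$ (giving the longer string plus the extra weight $-(\beta_i + \alpha_n)$) or at least $a_{r-1}$ (giving the shorter string stopping at $\beta_{i+1}$).

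Part (2) is the analogous computation for $v' = s_1 \cdots s_n s_1 \cdots s_{n-1}$, which can be viewed as the degenerate case of part (1) where the first batch is empty; it is verified by applying the same iterative peeling directly to the reduced expression of $v'$. The principal obstacle is the bookkeeping in the final stage: one must track carefully how each weight $-\beta_i - 2\alpha_n$ evolves under each successive reflection $s_j$, choosing the correct clause of Lemma \ref{lemma 1.1} based on the pairing, and using Lemma \ref{lemma 1.4} to identify the indecomposable $B_{\alpha_j}$-summands when several weights align, all while confirming that no spurious weights appear beyond those predicted by the claimed decomposition.
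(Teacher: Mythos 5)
Your proposal is correct and takes essentially the same route as the paper's proof: first compute $H^0$ of the inner segment $s_{a_r}\cdots s_{n-1}$ applied to $\alpha_{n-1}$, then prepend the reflections of $[a_{r-1},n]$ one at a time, deciding at each weight which clause of Lemma \ref{lemma1.3} applies and using Lemma \ref{lemma 1.4} to recognize the indecomposable $B_{\alpha}$-summands (the paper organizes the second stage as a descending induction on $k$ in $s_k s_{k+1}\cdots s_n u$, and obtains part (2) from part (1) by one further application of $s_1$). One small correction to your intermediate bookkeeping: the weight $\alpha_{n-1}$ does not survive the inner segment, since $\langle \alpha_{n-1},\alpha_{n-2}\rangle=-1$ kills the summand $\mathbb{C}_{\alpha_{n-1}}$ already at the $s_{n-2}$ step, so $H^0(s_{a_r}\cdots s_{n-1},\alpha_{n-1})=\mathbb{C}h(\alpha_{n-1})\oplus\bigl(\bigoplus_{j=a_r}^{n-1}\mathbb{C}_{-\beta_j}\bigr)$; this does not affect your final answer.
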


\begin{proof}

Proof of (1): Let $u =s_{a_{r}}s_{a_{r}+1}\cdots s_{n-1}.$ By
using SES we have

 $H^0(u, \alpha_{n-1})=\mathbb{C}h(\alpha_{n-1})\oplus(\bigoplus\limits_{j=a_{r}}^{n-1}\mathbb{C}_{-\beta_{j}}).$
 Since $\langle -\beta_{j} , \alpha_{n} \rangle=2$  for all $a_{r}\le j\le n-1,$ 

 by using SES we have
 
 $H^0(s_{n}u, \alpha_{n-1})= \mathbb{C}h({\alpha_{n-1}})\oplus (\bigoplus\limits_{j=a_{r}}^{n-1}\mathbb{C}_{-\beta_{j}}\oplus \mathbb{C}_{-(\beta_{j} + \alpha_{n})}\oplus \mathbb{C}_{-(\beta_{j} + 2\alpha_{n})}).$       \hspace{3.5cm} (4.1.1)
 
Since $\mathbb{C}h({\alpha_{n-1}}) \oplus \mathbb{C}_{-\alpha_{n-1}}$ is an indecomposable $B_{\alpha_{n-1}}$-module (see \cite[p.11]{CKP} and \cite[p.8]{Ka}), by Lemma \ref{lemma 1.4}, we have 

$\mathbb{C}h(\alpha_{n-1}) \oplus \mathbb{C}_{-\alpha_{n-1}}=V\otimes \mathbb{C}_{-\omega_{n-1}},$ 
 where $V$ is the two dimensional irreducible representation of $\tilde{L}_{\alpha_{n-1}}.$

Therefore by Lemma \ref{lemma1.3}(4) we have

 $H^0(\tilde{L}_{\alpha_{n-1}}/\tilde{B}_{\alpha_{n-1}}, \mathbb{C}h({\alpha_{n-1}}) \oplus \mathbb{C}_{-\alpha_{n-1}} )=0.$

Moreover, we have $\langle -(\beta_{n-1} + \alpha_{n}), \beta_{n-1}\rangle=-1$,  $\langle -(\beta_{n-1} + 2\alpha_{n}), \beta_{n-1}\rangle=0,$  $\langle - \beta_{j} , \beta_{n-1} \rangle = -1$, $\langle - (\beta_{j} +\alpha_{n}), \beta_{n-1} \rangle = 0$  and $\langle - (\beta_{j} + 2\alpha_{n}) , \beta_{n-1} \rangle = 1$ for all $a_{r}\le j \le n-2.$  

Therefore we have 

$H^0(s_{n-1}s_{n}u, \alpha_{n-1})= (\bigoplus\limits_{j=a_{r}}^{n-2}\mathbb{C}_{-(\beta_{j}+\alpha_{n})} \oplus \mathbb{C}_{-(\beta_{j} + 2\alpha_{n})}\oplus \mathbb{C}_{-(\beta_{j} + 2\alpha_{n} + \beta_{n-1})})\oplus \mathbb{C}_{-(\beta_{n-1} + 2\alpha_{n})}.$  \hspace{.3cm} (4.1.2)
 
{\bf Claim:} For $ a_{r-1}\le k\le n-2$

$H^0(s_{k}s_{k+1}\cdots s_{n}u, \alpha_{n-1})=\bigoplus\limits_{j=a_{r}}^{k-1}(\mathbb{C}_{-(\beta_{j} + \alpha_{n})} \oplus \mathbb{C}_{-(\beta_{j} + 2\alpha_{n})}\oplus\mathbb{C}_{-(\beta_{j} + 2\alpha_{n} + \beta_{n-1})} \oplus \cdots \oplus\mathbb{C}_{-(\beta_{j} + 2\alpha_{n} + \beta_{k})})$
$\bigoplus\limits_{j=k}^{n-2} 
(\mathbb{C}_{-(\beta_{j} + 2\alpha_{n})}\oplus \mathbb{C}_{-(\beta_{j} + 2\alpha_{n}+\beta_{n-1})}\oplus \cdots \oplus \mathbb{C}_{-(\beta_{j} + 2\alpha_{n} + \beta_{j+1})})\oplus \mathbb{C}_{-(\beta_{n-1} + 2\alpha_{n})}.$

Proof of the claim:
We will prove by descending induction on $k.$ 

By hypothesis we have 

$H^0(s_{k+1}\cdots s_{n}u, \alpha_{n-1})=\bigoplus\limits_{j=a_{r}}^{k}(\mathbb{C}_{-(\beta_{j} + \alpha_{n})} \oplus \mathbb{C}_{-(\beta_{j} + 2\alpha_{n})}\oplus\mathbb{C}_{-(\beta_{j} + 2\alpha_{n} + \beta_{n-1})} \oplus \cdots \oplus\mathbb{C}_{-(\beta_{j} + 2\alpha_{n} + \beta_{k+1})})$
$\bigoplus\limits_{j=k+1}^{n-2} 
(\mathbb{C}_{-(\beta_{j} + 2\alpha_{n})}\oplus \mathbb{C}_{-(\beta_{j} + 2\alpha_{n}+\beta_{n-1})}\oplus \cdots \oplus \mathbb{C}_{-(\beta_{j} + 2\alpha_{n} + \beta_{j+1})})\oplus \mathbb{C}_{-(\beta_{n-1} + 2\alpha_{n})}. \hspace{2.9cm} (4.1.3)$

Let $V=\bigoplus\limits_{j=a_{r}}^{k-1}(\mathbb{C}_{-(\beta_{j} + \alpha_{n})} \oplus \mathbb{C}_{-(\beta_{j} + 2\alpha_{n})}\oplus\mathbb{C}_{-(\beta_{j} + 2\alpha_{n} + \beta_{n-1})} \oplus \cdots \oplus\mathbb{C}_{-(\beta_{j} + 2\alpha_{n} + \beta_{k+1})})$ and $V'=\bigoplus\limits_{j=k+2}^{n-2} 
(\mathbb{C}_{-(\beta_{j} + 2\alpha_{n})}\oplus \mathbb{C}_{-(\beta_{j} + 2\alpha_{n}+\beta_{n-1})}\oplus \cdots \oplus \mathbb{C}_{-(\beta_{j} + 2\alpha_{n} + \beta_{j+1})}) \oplus \mathbb{C}_{-(\beta_{n-1} + 2\alpha_{n})}.$

Then roots $\{(\beta_{j} + \alpha_{n}), (\beta_{j} + 2\alpha_{n}), (\beta_{j} + 2\alpha_{n} + \beta_{n-1}),\dots,(\beta_{j} + 2\alpha_{n} + \beta_{k+2}): a_{r}\le j \le k-1\},$ $\{(\beta_{j} + 2\alpha_{n}), (\beta_{j} + 2\alpha_{n} + \beta_{n-1}),\dots,(\beta_{j} + 2\alpha_{n} + \beta_{j+1}): k+2\le j \le n-2\}$ and $-(\beta_{n-1} + 2\alpha_{n})$
are orthogonal to $\alpha_{k}.$

Therefore $V$, $V'$ are direct sums of irreducible $\tilde{L}_{\alpha_{k}}-$modules. By Lemma \ref{lemma1.3}(2), we have 

 $H^0(\tilde{L}_{\alpha_{k}}/\tilde{B}_{\alpha_{k}}, V)= V,$
$H^0(\tilde{L}_{\alpha_{k}}/\tilde{B}_{\alpha_{k}}, V')= V'$  and

 $H^0(\tilde{L}_{\alpha_{k}}/\tilde{B}_{\alpha_{k}}, \mathbb{C}_{-(\beta_{n-1} + 2\alpha_{n})})=\mathbb{C}_{-(\beta_{n-1} + 2\alpha_{n})}.$

Further the remaining roots of $(4.1.3)$ are $\{-(\beta_{k} + \alpha_{n}), -(\beta_{k} + 2\alpha_{n}), -(\beta_{k} +2\alpha_{n} + \beta_{n-1}),\ldots, -(\beta_{k}+2\alpha_{n}+ \beta_{k+2}), -(\beta_{k} + 2\alpha_{n}+ \beta_{k+1})\},$ $\{-(\beta_{k+1} + 2\alpha_{n}),-(\beta_{k+1}+2\alpha_{n}+\beta_{n-1}), \ldots , -(\beta_{k} + 2\alpha_{n}+ \beta_{k+2})\}$ and $\{-(\beta_{j} + 2\alpha_{n}+ \beta_{k+1}): a_{r}\le j \le k\}.$

 Since $\langle -(\beta_{k}+\alpha_{n}), \alpha_{k}\rangle=-1,$  by Lemma \ref{lemma1.3}(4) we have 
 
 $ H^0(\tilde{L}_{\alpha_{k}}/\tilde{B}_{\alpha_{k}}, \mathbb{C}_{-(\beta_{k} + \alpha_{n})})=0.$ 
 
Since $\mathbb{C}_{-(\beta_{k} + 2\alpha_{n})}\oplus \mathbb{C}_{-(\beta_{k+1} + 2\alpha_{n})}$  is the irreducible two dimensional $\tilde{L}_{\alpha_{k}}$-module, by Lemma \ref {lemma1.3}(2) we have
 
 $H^0(\tilde{L}_{\alpha_{k}}/\tilde{B}_{\alpha_{k}}, \mathbb{C}_{-(\beta_{k} + 2\alpha_{n})}\oplus \mathbb{C}_{-(\beta_{k+1} + 2\alpha_{n})} )= \mathbb{C}_{-(\beta_{k} + 2\alpha_{n})}\oplus \mathbb{C}_{-(\beta_{k+1} + 2\alpha_{n})}.$

Similarly for each $k+2\le j\le n-1,$  $\mathbb{C}_{-(\beta_{k} + 2\alpha_{n}+ \beta_{j})}\oplus \mathbb{C}_{-(\beta_{k+1} + 2\alpha_{n}+ \beta_{j})}$  is the irreducible two dimensional $\tilde{L}_{\alpha_{k}}-$module. Therefore by Lemma \ref {lemma1.3}(2) we have

$H^0(\tilde{L}_{\alpha_{k}}/\tilde{B}_{\alpha_{k}}, \mathbb{C}_{-(\beta_{k} + 2\alpha_{n}+ \beta_{j})} \oplus \mathbb{C}_{-(\beta_{k+1} + 2\alpha_{n} + \beta_{j})})= \mathbb{C}_{-(\beta_{k} + 2\alpha_{n} + \beta_{j})} \oplus \mathbb{C}_{-(\beta_{k+1} + 2\alpha_{n} + \beta_{j})})$ for each $k+2\le j\le n-1.$

Moreover, $\langle-(\beta_{j} + 2\alpha_{n} + \beta_{k+1}) , \alpha_{k}\rangle=1$ for all $a_{r}\le j \le k-1 $. Therefore by Lemma \ref {lemma1.3}(2) we have  

$H^0(\tilde{L}_{\alpha_{k}}/\tilde{B}_{\alpha_{k}}, \mathbb{C}_{-(\beta_{j} + 2\alpha_{n}+ \beta_{k+1})})=\mathbb{C}_{-(\beta_{j} +
2\alpha_{n} + \beta_{k+1})} \oplus \mathbb{C}_{-(\beta_{j} + 2\alpha_{n}+ \beta_{k})}$ for all   $a_{r}\le j \le k-1 .$ 

Since $\langle-(\beta_{k} + 2\alpha_{n} + \beta_{k+1}) , \alpha_{k}\rangle=0,$ by Lemma \ref {lemma1.3}(2)  we have

 $H^0(\tilde{L}_{\alpha_{k}}/\tilde{B}_{\alpha_{k}}, \mathbb{C}_{-(\beta_{k} + 2\alpha_{n}+ \beta_{k+1})})=\mathbb{C}_{-(\beta_{k} +
	2\alpha_{n}+ \beta_{k+1})}.$

From the above discussion, we have

$H^0(s_{k}s_{k+1}\cdots s_{n}u,
\alpha_{n-1})=\bigoplus\limits_{j=a_{r}}^{k-1}(\mathbb{C}_{-(\beta_{j} + \alpha_{n})} \oplus \mathbb{C}_{-(\beta_{j} + 2\alpha_{n})} \oplus \mathbb{C}_{-(\beta_{j} + 2\alpha_{n} + \beta_{n-1})} \oplus \cdots \oplus \mathbb{C}_{-(\beta_{j} + 2\alpha_{n} + \beta_{k})})$ 
$\bigoplus \limits_{j=k}^{n-2} 
(\mathbb{C}_{-(\beta_{j} + 2\alpha_{n})} \oplus \mathbb{C}_{-(\beta_{j} + 2\alpha_{n}+\beta_{n-1})} \oplus \cdots \oplus \mathbb{C}_{-(\beta_{j} + 2\alpha_{n} + \beta_{j+1})}) \oplus \mathbb{C}_{-(\beta_{n-1} + 2\alpha_{n})}.$

Therefore the claim follows.

The claim implies

 $H^0(v, \alpha_{n-1})=H^0(s_{a_{r-1}}s_{a_{r}+1}\cdots s_{n}v'_{r},\alpha_{n-1})=\bigoplus\limits_{j=a_{r}}^{a_{r-1}-1}(\mathbb{C}_{-(\beta_{j} + \alpha_{n})} \oplus \mathbb{C}_{-(\beta_{j} + 2\alpha_{n})} \oplus \mathbb{C}_{-(\beta_{j} + 2\alpha_{n} + \beta_{n-1})} \oplus \cdots \oplus \mathbb{C}_{-(\beta_{j} + 2\alpha_{n} + \beta_{a_{r-1}})})$
 $\bigoplus\limits_{j=a_{r-1}}^{n-2} 
 (\mathbb{C}_{-(\beta_{j} + 2\alpha_{n})} \oplus \mathbb{C}_{-(\beta_{j} + 2\alpha_{n}+ \beta_{n-1})} \oplus \cdots \oplus \mathbb{C}_{-(\beta_{j} + 2\alpha_{n} + \beta_{j+1})})\oplus \mathbb{C}_{-(\beta_{n-1} + 2\alpha_{n})}
 .$

Proof of (2): By (1) we have

$H^0(s_{2}s_{3}\cdots s_{n}s_{1}s_{2}\cdots s_{n-1}, \alpha_{n-1} )=(\mathbb{C}_{-(\beta_{1} + \alpha_{n})} \oplus \mathbb{C}_{-(\beta_{1} + 2\alpha_{n})} \oplus \mathbb{C}_{-(\beta_{1} + 2\alpha_{n} + \beta_{n-1})} \oplus \cdots \oplus \mathbb{C}_{-(\beta_{1} + 2\alpha_{n} + \beta_{2})})$
$\bigoplus\limits_{j=2}^{n-2} 
(\mathbb{C}_{-(\beta_{j} + 2\alpha_{n})}\oplus \mathbb{C}_{-(\beta_{j} + 2\alpha_{n}+ \beta_{n-1})} \oplus \cdots \oplus \mathbb{C}_{-(\beta_{j} + 2\alpha_{n} + \beta_{j+1})})\oplus \mathbb{C}_{-(\beta_{n-1} + 2\alpha_{n})}.
\hspace{.5cm}(4.1.4)$

 The roots of (4.1.4)  are $\{-(\beta_{1} + \alpha_{n}), -(\beta_{1} + 2\alpha_{n}), -(\beta_{1} +2\alpha_{n} + \beta_{n-1}),\ldots, -(\beta_{1}+2\alpha_{n}+ \beta_{3}), -(\beta_{1} + 2\alpha_{n}+ \beta_{2})\},$ $\{-(\beta_{2} + 2\alpha_{n}),-(\beta_{2}+2\alpha_{n}+\beta_{n-1}), \ldots , -(\beta_{1} + 2\alpha_{n}+ \beta_{3})\}$ and $-(\beta_{1} + 2\alpha_{n}+ \beta_{2}).$

 Since $-(\beta_{n-1} + 2\alpha_{n})$
 is orthogonal to $\alpha_{1},$ by Lemma \ref{lemma1.3}(2) we have

  $H^0(\tilde{L}_{\alpha_{1}}/\tilde{B}_{\alpha_{1}}, \mathbb{C}_{-(\beta_{n-1} + \alpha_{n})})=\mathbb{C}_{-(\beta_{n-1} + \alpha_{n})}.$ 
 
Since $\langle -(\beta_{1} + \alpha_{n}) , \alpha_{1} \rangle=-1,$  by Lemma \ref{lemma1.3}(4) we have 

$ H^0(\tilde{L}_{\alpha_{1}}/\tilde{B}_{\alpha_{1}}, \mathbb{C}_{-(\beta_{1} + \alpha_{n})})=0.$ 

Since $\mathbb{C}_{-(\beta_{1} + 2\alpha_{n})}\oplus \mathbb{C}_{-(\beta_{2} + 2\alpha_{n})}$  is the irreducible two dimensional $\tilde{L}_{\alpha_{1}}$-module, by Lemma \ref {lemma1.3}(2) we have

$H^0(\tilde{L}_{\alpha_{1}}/\tilde{B}_{\alpha_{1}}, \mathbb{C}_{-(\beta_{1} + 2\alpha_{n})}\oplus \mathbb{C}_{-(\beta_{2} + 2\alpha_{n})} )= \mathbb{C}_{-(\beta_{1} + 2\alpha_{n})} \oplus \mathbb{C}_{-(\beta_{2} + 2\alpha_{n})}.$

Similarly for each $3\le j\le n-1,$  $\mathbb{C}_{-(\beta_{1} + 2\alpha_{n} + \beta_{j})} \oplus \mathbb{C}_{-(\beta_{2} + 2\alpha_{n} + \beta_{j})}$  is the irreducible two dimensional $\tilde{L}_{\alpha_{1}}-$module. Therefore by Lemma \ref {lemma1.3}(2) we have

$H^0(\tilde{L}_{\alpha_{1}}/\tilde{B}_{\alpha_{1}}, \mathbb{C}_{-(\beta_{1} + 2\alpha_{n} + \beta_{j})} \oplus \mathbb{C}_{-(\beta_{2} + 2\alpha_{n} + \beta_{j})})= \mathbb{C}_{-(\beta_{1} + 2\alpha_{n}+ \beta_{j})} \oplus \mathbb{C}_{-(\beta_{2} + 2\alpha_{n}+ \beta_{j})})$ for each $3\le j\le n-1.$

Since $\langle-(\beta_{1} + 2\alpha_{n} + \beta_{2}) , \alpha_{1}\rangle=0,$ by Lemma \ref {lemma1.3}(2)  we have

$H^0(\tilde{L}_{\alpha_{1}}/\tilde{B}_{\alpha_{1}}, \mathbb{C}_{-(\beta_{1} + 2\alpha_{n} + \beta_{2})})=\mathbb{C}_{-(\beta_{1} +
	2\alpha_{n}+ \beta_{2})}.$




 From the above discussion, we have

$H^0(v', \alpha_{n-1})=\bigoplus\limits_{i=1}^{n-2} 
(\mathbb{C}_{-(\beta_{i} + 2\alpha_{n})}\oplus \mathbb{C}_{-(\beta_{i} + 2\alpha_{n} + \beta_{n-1})} \oplus \cdots \oplus \mathbb{C}_{-(\beta_{i} + 2\alpha_{n} + \beta_{i+1})})\oplus \mathbb{C}_{-(\beta_{n-1} + 2\alpha_{n})}
.$
\end{proof}

\begin{lem}\label {lemma 5.2}
Let $3\le r \le k$ and let $v= s_{a_{r-1}}s_{a_{r-1}+1}\cdots s_{n}s_{a_{r}}s_{a_{r}+1}\cdots s_{n-1}.$ Then we have
 
\begin{enumerate}
 	\item 
 
 $H^0(s_{a_{r-2}}\cdots s_{n}v, \alpha_{n-1})=\bigoplus\limits_{i=a_{r}}^{a_{r-1}-1}(\mathbb{C}_{-(\beta_{i} + 2\alpha_{n} + \beta_{a_{r-2}-1})} \oplus \cdots \oplus\mathbb{C}_{-(\beta_{i} + 2\alpha_{n} + \beta_{a_{r-1}})})$
 
 $\bigoplus\limits_{i=a_{r-1}}^{a_{r-2}-2} 
 (\mathbb{C}_{-(\beta_{i} + 2\alpha_{n} + \beta_{a_{r-2}-1})}\oplus \cdots \oplus \mathbb{C}_{-(\beta_{i} + 2\alpha_{n} + \beta_{i+1})})
 .$
 
\item[(2)]$H^0(w_{r}, \alpha_{n-1})=\bigoplus\limits_{i=a_{r}}^{a_{r-1}-1}(\mathbb{C}_{-(\beta_{i} + 2\alpha_{n} + \beta_{a_{r-2}-1})} \oplus \cdots \oplus \mathbb{C}_{-(\beta_{i} + 2\alpha_{n} + \beta_{a_{r-1}})})$

$\bigoplus\limits_{i=a_{r-1}}^{a_{r-2}-2} 
(\mathbb{C}_{-(\beta_{i} + 2\alpha_{n} + \beta_{a_{r-2}-1})}\oplus \cdots \oplus \mathbb{C}_{-(\beta_{i} + 2\alpha_{n} + \beta_{i+1})}).$

\item[(3)] Let $u_{1}=(s_{a_{1}}\cdots s_{n})(s_{a_{2}}\cdots s_{n})\cdots (s_{a_{k-1}}\cdots s_{n})v',$ where $v'$ is defined as in Lemma \ref {lemma 5.1}. Then we have

$H^0(u_{1}, \alpha_{n-1})=\bigoplus\limits_{i=1}^{a_{k-1}-2} 
(\mathbb{C}_{-(\beta_{i} + 2\alpha_{n} + \beta_{a_{k-1}-1})} \oplus \cdots \oplus \mathbb{C}_{-(\beta_{i} + 2\alpha_{n} + \beta_{i+1})}).$

\end{enumerate}
\end{lem}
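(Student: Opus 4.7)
The plan is to prove each of the three parts by iterating the short exact sequence of Lemma~\ref{lemma 1.2}(1), namely $H^0(s_\alpha w,\lambda)\simeq H^0(s_\alpha, H^0(w,\lambda))$ when $l(s_\alpha w)=l(w)+1$, and analysing each step via the indecomposable classification of Lemma~\ref{lemma 1.4} and the cohomology dichotomy of Lemma~\ref{lemma1.3}, starting from the explicit base cases furnished by Lemma~\ref{lemma 5.1}. At each step one peels off a single simple reflection $s_k$ from the left of the reduced word, views the current module as a $\tilde B_{\alpha_k}$-module, and decomposes it into a sum of indecomposable summands $V'\otimes \mathbb{C}_\lambda$ using Lemma~\ref{lemma 1.4}. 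Lemma~\ref{lemma1.3} then dictates the outcome according to $\langle\lambda,\alpha_k\rangle$: pairing $0$ preserves the summand, pairing $\geq 1$ replaces it by a Demazure extension, pairing $-1$ kills it, and pairing $\leq -2$ kills $H^0$.

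For part~(1), I would begin from the decomposition of $H^0(v,\alpha_{n-1})$ in Lemma~\ref{lemma 5.1}(1) and apply $s_n, s_{n-1},\dots, s_{a_{r-2}}$ in succession, establishing intermediate formulas for $H^0(s_k s_{k+1}\cdots s_n v,\alpha_{n-1})$ by descending induction on $k$. The initial step $k=n$ is the most delicate: using the $B_n$-specific Cartan entry $\langle\alpha_{n-1},\alpha_n\rangle=-2$, the weights $-(\beta_i+2\alpha_n)$ and $-(\beta_{n-1}+2\alpha_n)$ pair $-2$ with $\alpha_n$ and vanish in $H^0$, while the remaining weights pair $0$ with $\alpha_n$ and survive. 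Each subsequent reflection $s_k$ for $a_{r-2}\leq k\leq n-1$ is handled by direct computation of the pairings $\langle-(\beta_i+c\alpha_n+\beta_j),\alpha_k\rangle$, which take values in $\{-1,0,1\}$ depending on the position of $k$ relative to $i,j$; Lemma~\ref{lemma1.3} then produces the desired shifts and killings, yielding the stated formula once the step $k=a_{r-2}$ has been processed.

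For part~(2), factor $w_r=(s_{a_1}\cdots s_n)\cdots(s_{a_{r-3}}\cdots s_n)\cdot (s_{a_{r-2}}\cdots s_n\, v)$ with lengths adding up (Lemma~\ref{lemma 3.4}), so
\[
H^0(w_r,\alpha_{n-1})=H^0\bigl((s_{a_1}\cdots s_n)\cdots(s_{a_{r-3}}\cdots s_n),\, H^0(s_{a_{r-2}}\cdots s_n\, v,\alpha_{n-1})\bigr).
\]
Every weight $-(\beta_i+2\alpha_n+\beta_j)$ appearing in the output of part~(1) satisfies $j\leq a_{r-2}-1<a_{r-3}$, while every simple reflection $s_k$ in the prefix satisfies $k\geq a_{r-3}$, hence $k>j>i$. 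A direct Cartan computation shows $\langle-(\beta_i+2\alpha_n+\beta_j),\alpha_k\rangle=0$ for every such $k$: when $k>j$ the coefficients of $\alpha_{k-1},\alpha_k,\alpha_{k+1}$ (whenever they exist) in $\beta_i+2\alpha_n+\beta_j$ all equal $2$ and the three contributions cancel, including at the boundary $k\in\{n-1,n\}$ via the entries $\langle\alpha_n,\alpha_{n-1}\rangle=-1$ and $\langle\alpha_{n-1},\alpha_n\rangle=-2$. By Lemma~\ref{lemma1.3}(2) each $1$-dimensional weight space is therefore preserved under $H^0(s_k,-)$, and iterating through all the prefix reflections yields the asserted identity with part~(1).

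Part~(3) proceeds analogously, starting from $H^0(v',\alpha_{n-1})$ in Lemma~\ref{lemma 5.1}(2) rather than from Lemma~\ref{lemma 5.1}(1): first apply the reflections $s_n, s_{n-1},\dots, s_{a_{k-1}}$ (a variant of the argument of part~(1) adapted to the $a_k=1$ case) to obtain a module with exactly the weights $-(\beta_i+2\alpha_n+\beta_j)$ for $1\leq i\leq a_{k-1}-2$ and $i+1\leq j\leq a_{k-1}-1$, then verify as in part~(2) that the remaining reflections $(s_{a_l}\cdots s_n)$ for $l=k-2,\dots,1$ act trivially on every such weight. The principal obstacle throughout is the combinatorial bookkeeping of $\tilde B_{\alpha_k}$-indecomposable structure at each step of the induction, and the need to correctly pair up adjacent weights $\mu,\mu-\alpha_k$ with pairings $(1,-1)$ into $2$-dimensional irreducible $\tilde L_{\alpha_k}$-modules when required by Lemma~\ref{lemma 1.4}; the $B_n$-specific asymmetry $\langle\alpha_{n-1},\alpha_n\rangle=-2$ versus $\langle\alpha_n,\alpha_{n-1}\rangle=-1$ (absent in type $C_n$) drives the killings at the step $k=n$ and accounts for the smaller module size compared to Lemma~\ref{lemma 5.1}.
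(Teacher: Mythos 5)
Your overall strategy coincides with the paper's: start from Lemma \ref{lemma 5.1}, peel off the prefix reflections one at a time via $H^0(s_\alpha w,\lambda)\simeq H^0(s_\alpha,H^0(w,\lambda))$, decompose into $\tilde{B}_{\alpha_k}$-indecomposables by Lemma \ref{lemma 1.4}, and apply Lemma \ref{lemma1.3}; your orthogonality computation in part (2) (and its analogue in part (3)) is exactly the paper's argument and is correct.

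However, your concrete description of the decisive first step $k=n$ of part (1) is wrong, and if carried through it produces a strictly larger module than the stated answer. You claim that at this step only the weights pairing $-2$ with $\alpha_n$ (namely $-(\beta_i+2\alpha_n)$ and $-(\beta_{n-1}+2\alpha_n)$) die, "while the remaining weights pair $0$ with $\alpha_n$ and survive." But the weights $-(\beta_i+\alpha_n)$ for $a_r\le i\le a_{r-1}-1$, which do pair $0$ with $\alpha_n$, must also die: each $\mathbb{C}_{-(\beta_i+\alpha_n)}\oplus\mathbb{C}_{-(\beta_i+2\alpha_n)}$ is an indecomposable two-dimensional $B_{\alpha_n}$-module, isomorphic by Lemma \ref{lemma 1.4} to $V_i\otimes\mathbb{C}_{-\omega_n}$ with $V_i$ the two-dimensional irreducible $\tilde{L}_{\alpha_n}$-module, so Lemma \ref{lemma1.3}(4) (the character has pairing $-1$) kills the entire summand, including its $0$-pairing weight. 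This is precisely the point where one cannot reason weight-by-weight, and it is the mechanism the paper invokes; your own general framework in the first paragraph would have caught it, but your stated survival rule contradicts it. Without this correction, the extra weights $-(\beta_i+\alpha_n)$ are orthogonal to every subsequent $\alpha_k$ with $k>i$ and would persist all the way into the final formulas of (1), (2) and (3), which is false. (A second, smaller imprecision: the standalone weights $-(\beta_i+2\alpha_n)$ for $a_{r-1}\le i\le n-2$ die by Lemma \ref{lemma1.3}(3), i.e.\ only $H^0$ vanishes, not the whole summand as in the paired case; and no pairing $+1$ actually occurs among the survivors in this part, so no "shifts" arise.)
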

\begin{proof}
Proof of (1):
Since $r\ge 3,$ we have $a_{r-1}<n.$ By Lemma \ref {lemma 5.1}(1), we have 

	$H^0(v,\alpha_{n-1})=\bigoplus\limits_{i=a_{r}}^{a_{r-1}-1}(\mathbb{C}_{-(\beta_{i} + \alpha_{n})} \oplus \mathbb{C}_{-(\beta_{i} + 2\alpha_{n})} \oplus \mathbb{C}_{-(\beta_{i} + 2\alpha_{n} + \beta_{n-1})} \oplus \cdots \oplus \mathbb{C}_{-(\beta_{i} + 2\alpha_{n} + \beta_{a_{r-1}})})$
$\bigoplus\limits_{i=a_{r-1}}^{n-2} 
(\mathbb{C}_{-(\beta_{i} + 2\alpha_{n})}\oplus \mathbb{C}_{-(\beta_{i} + 2\alpha_{n}+\beta_{n-1})} \oplus \cdots \oplus \mathbb{C}_{-(\beta_{i} + 2\alpha_{n} + \beta_{i+1})}) \oplus \mathbb{C}_{-(\beta_{n-1} + 2\alpha_{n})}
.$

Since $\{-(\beta_{i} + 2\alpha_{n}+\beta_{n-1}), \ldots, -(\beta_{i} + 2\alpha_{n} + \beta_{a_{r-1}}): a_{r}\le i \le a_{r-1}-1\}$ are orthogonal to $\alpha_{n}$, by Lemma \ref {lemma1.3}(2), we have 

$H^0(\tilde{L}_{\alpha_{n}}/\tilde{B}_{\alpha_{n}}, \mathbb{C}_{-(\beta_{i} + 2\alpha_{n} + \beta_{t})})=\mathbb{C}_{-(\beta_{i} +
	2\alpha_{n}+ \beta_{t})}$ for all $a_{r}\le i\le a_{r-1}-1$ and $a_{r-1} \le t \le n-1.$

Since $\{-(\beta_{i} + 2\alpha_{n} + \beta_{n-1}), \ldots, -(\beta_{i} + 2\alpha_{n} + \beta_{i+1}): a_{r-1}\le i \le n-2\}$ are orthogonal to $\alpha_{n}$, by Lemma \ref {lemma1.3}(2) we have 

$H^0(\tilde{L}_{\alpha_{n}}/\tilde{B}_{\alpha_{n}}, \mathbb{C}_{-(\beta_{i} + 2\alpha_{n} + \beta_{l})})=\mathbb{C}_{-(\beta_{i} +
	2\alpha_{n} + \beta_{l})}$ for all  $ i+1 \le l \le n-1,$ where $a_{r-1}\le i\le n-2.$

Since 
$\langle -(\beta_{i}+ 2\alpha_{n}),\alpha_{n}\rangle=-2$ for all $a_{r}\le i\le n-1,$ by Lemma \ref {lemma1.3}(3) we have 

$H^0(\tilde{L}_{\alpha_{n}}/\tilde{B}_{\alpha_{n}}, \mathbb{C}_{-(\beta_{n-1} + 2\alpha_{n})})=0$ for all $a_{r-1}\le i\le n-1.$

Moreover, for each $a_{r}\le i\le a_{r-1}-1,$ $\mathbb{C}_{-(\beta_{i} + \alpha_{n})} \oplus \mathbb{C}_{-(\beta_{i} + 2\alpha_{n})}$ is an indecomposable two dimensioal $B_{\alpha_{n}}$-module. Therefore by Lemma \ref {lemma 1.4}, we have $\mathbb{C}_{-(\beta_{i} + \alpha_{n})} \oplus \mathbb{C}_{-(\beta_{i} + 2\alpha_{n})}=V_{i}\otimes \mathbb{C}_{-\omega_{n}},$
where $V_{i}$ is the irreducible two dimensional representation of $\tilde{L}_{\alpha_{n}}.$
By Lemma \ref {lemma1.3}(4) we have

$H^0(\tilde{L}_{\alpha_{n}}/\tilde{B}_{\alpha_{n}}, \mathbb{C}_{-(\beta_{i} + \alpha_{n})} \oplus \mathbb{C}_{-(\beta_{i} + 2\alpha_{n})})=0$  for each $a_{r}\le i\le a_{r-1}-1.$


From the above discussion, we have 

$H^0(s_{n}v_{r}, \alpha_{n-1})= \bigoplus\limits_{i=a_{r}}^{a_{r-1}-1}(\mathbb{C}_{-(\beta_{i} + 2\alpha_{n} + \beta_{n-1})} \oplus \cdots \oplus \mathbb{C}_{-(\beta_{i} + 2\alpha_{n} + \beta_{a_{r-1}})})$
$\bigoplus\limits_{i=a_{r-1}}^{n-2} 
(\mathbb{C}_{-(\beta_{i} + 2\alpha_{n} + \beta_{n-1})} \oplus \cdots \oplus \mathbb{C}_{-(\beta_{i} + 2\alpha_{n} + \beta_{i+1})}).$

Since $\langle -(\beta_{i} + 2\alpha_{n} + \beta_{n-1}) , \alpha_{n-1} \rangle= -1$  for each $a_{r}\le i \le n-2,$ by Lemma \ref{lemma1.3}(4) we have  $H^0(\tilde{L}_{\alpha_{n-1}}/\tilde{B}_{\alpha_{n-1}}, \mathbb{C}_{-(\beta_{i} + \alpha_{n} + \beta_{n-1})})=0$  for each $a_{r}\le i\le n-2.$

Moreover, $\{-(\beta_{i} + 2\alpha_{n} + \beta_{n-2}), \dots, -(\beta_{i} + 2\alpha_{n}+ \beta_{a_{r-1}}): a_{r}\le i\le a_{r-1}-1\}$, $\{-(\beta_{i} + 2\alpha_{n}+ \beta_{n-2}), \dots, -(\beta_{i} + 2\alpha_{n}+ \beta_{i+1}): a_{r-1}\le i\le n-3\}$, and $\beta_{n-1}+2\alpha_{n}$  are orthogonal to $\alpha_{n-1}.$

Therefore  we have 

$H^0(s_{n-1}s_{n}v, \alpha_{n-1})= \bigoplus\limits_{i=a_{r}}^{a_{r-1}-1}(\mathbb{C}_{-(\beta_{i} + 2\alpha_{n} + \beta_{n-2})} \oplus \cdots \oplus \mathbb{C}_{-(\beta_{i} + 2\alpha_{n} + \beta_{a_{r-1}})})$
$\bigoplus\limits_{i=a_{r-1}}^{n-3} 
(\mathbb{C}_{-(\beta_{i} + 2\alpha_{n} + \beta_{n-2})} \oplus \cdots \oplus \mathbb{C}_{-(\beta_{i} + 2\alpha_{n} + \beta_{i+1})})
.$

Proceeding recursively, we have 

$H^0(s_{a_{r-2}}s_{a_{r-2}+1}\cdots s_{n}v, \alpha_{n-1})= \bigoplus\limits_{i=a_{r}}^{a_{r-1}-1}(\mathbb{C}_{-(\beta_{i} + 2\alpha_{n} + \beta_{a_{r-2}-1})} \oplus \cdots \oplus \mathbb{C}_{-(\beta_{i} + 2\alpha_{n} + \beta_{a_{r-1}})})\bigoplus$ \\
$\bigoplus\limits_{i=a_{r-1}}^{a_{r-2}-2} 
(\mathbb{C}_{-(\beta_{i} + 2\alpha_{n} + \beta_{a_{r-2}-1})} \oplus \cdots \oplus \mathbb{C}_{-(\beta_{i} + 2\alpha_{n} + \beta_{i+1})}).$

Proof of (2):

Since $\{-(\beta_{i} + 2\alpha_{n} + \beta_{a_{r-2}-1}), \ldots , -(\beta_{i} + 2\alpha_{n} + \beta_{a_{r-1}}): a_{r}\le i\le a_{r-1}-1\},$ $\{-(\beta_{i} +2\alpha_{n} + \beta_{a_{r-2}-1}), \ldots , -(\beta_{i} + 2\alpha_{n} + \beta_{i+1}): a_{r-1}\le i\le a_{r-2}-2\}$ are orthogonal to 
$\alpha_{j}$ for all $a_{r-3}\le j\le n,$ by Lemma \ref{lemma1.3}(2) we have 

$H^0(s_{a_{r-3}}\cdots s_{n}s_{a_{r-2}}\cdots s_{n}v, \alpha_{n-1})=H^0(s_{a_{r-2}}\cdots s_{n}v, \alpha_{n-1}).$

Proceeding recursively we have 

$H^0(w_{r}, \alpha_{n-1})=H^0(s_{a_{r-2}}\cdots s_{n}v, \alpha_{n-1})=\bigoplus\limits_{i=a_{r}}^{a_{r-1}-1}(\mathbb{C}_{-(\beta_{i} + 2\alpha_{n} + \beta_{a_{r-2}-1})} \oplus \cdots \oplus \mathbb{C}_{-(\beta_{i} + 2\alpha_{n} + \beta_{a_{r-1}})})$
$\bigoplus\limits_{i=a_{r-1}}^{a_{r-2}-2} 
(\mathbb{C}_{-(\beta_{i} + 2\alpha_{n} + \beta_{a_{r-2}-1})} \oplus \cdots \oplus \mathbb{C}_{-(\beta_{i} + 2\alpha_{n} + \beta_{i+1})}).$

Proof of (3):

By Lemma \ref {lemma 5.1}(2) we have 	

$H^0(v',\alpha_{n-1})= \bigoplus\limits_{i=1}^{n-2} 
(\mathbb{C}_{-(\beta_{i} + 2\alpha_{n})} \oplus \mathbb{C}_{-(\beta_{i} + 2\alpha_{n}+ \beta_{n-1})} \oplus \cdots \oplus \mathbb{C}_{-(\beta_{i} + 2\alpha_{n} + \beta_{i+1})}) \oplus \mathbb{C}_{-(\beta_{n-1} + 2\alpha_{n})}
.$

Since $\{-(\beta_{i} + 2\alpha_{n}+\beta_{n-1}), \ldots, -(\beta_{i} + 2\alpha_{n} + \beta_{i+1}): 1\le i \le n-2\}$ are orthogonal to $\alpha_{n}$, by Lemma \ref {lemma1.3}(2) we have 

$H^0(\tilde{L}_{\alpha_{n}}/\tilde{B}_{\alpha_{n}}, \mathbb{C}_{-(\beta_{i} + 2\alpha_{n} + \beta_{l})})=\mathbb{C}_{-(\beta_{i} +
	2\alpha_{n} + \beta_{l})}$ for all  $ i+1 \le l \le n-1,$ where $1\le i \le n-2.$

Since 
$\langle -(\beta_{i}+ 2\alpha_{n}),\alpha_{n}\rangle=-2$ for all $1\le i\le n-1,$ by Lemma \ref {lemma1.3}(3) we have 

$H^0(\tilde{L}_{\alpha_{n}}/\tilde{B}_{\alpha_{n}}, \mathbb{C}_{-(\beta_{n-1} + 2\alpha_{n})})=0$ for all $1\le i \le n-1.$

From the above discussion, we have

$H^0(s_{n}v', \alpha_{n-1})=
\bigoplus\limits_{i=1}^{n-2} 
(\mathbb{C}_{-(\beta_{i} + 2\alpha_{n} + \beta_{n-1})} \oplus \cdots \oplus \mathbb{C}_{-(\beta_{i} + 2\alpha_{n} + \beta_{i+1})}).$

Since $\langle -(\beta_{i} + 2\alpha_{n} +\beta_{n-1}) , \alpha_{n-1} \rangle= -1$  for each $1\le i \le n-2,$ by Lemma \ref{lemma1.3}(4) we have  $H^0(\tilde{L}_{\alpha_{n-1}}/\tilde{B}_{\alpha_{n-1}}, \mathbb{C}_{-(\beta_{i} + 2\alpha_{n} + \beta_{n-1})})=0$  for each $1\le i\le n-2.$

Moreover, $\{-(\beta_{i} + 2\alpha_{n}+ \beta_{n-2}), \dots, -(\beta_{i} + 2\alpha_{n}+ \beta_{i+1}): 1\le i\le n-3\}$, and $\beta_{n-1}+2\alpha_{n}$  are orthogonal to $\alpha_{n-1}.$

Therefore  we have 

$H^0(s_{n-1}s_{n}v', \alpha_{n-1})= \bigoplus\limits_{i=1}^{n-3} 
(\mathbb{C}_{-(\beta_{i} + 2\alpha_{n} + \beta_{n-2})} \oplus \cdots \oplus \mathbb{C}_{-(\beta_{i} + 2\alpha_{n} + \beta_{i+1})})
.$

Proceeding recursively we have 

$H^0(s_{a_{k-1}}s_{a_{k-1}+1}\cdots s_{n}v', \alpha_{n-1})=
\bigoplus\limits_{i=1}^{a_{k-1}-2} 
(\mathbb{C}_{-(\beta_{i} + 2\alpha_{n} + \beta_{a_{k-1}-1})}\oplus \cdots \oplus \mathbb{C}_{-(\beta_{i} + 2\alpha_{n} + \beta_{i+1})}).$

 Since $\{-(\beta_{i} +2\alpha_{n} + \beta_{a_{k-2}-1}), \ldots , -(\beta_{i} + 2\alpha_{n} + \beta_{i+1}): 1\le i\le a_{k-1}-2\}$ are orthogonal to 
$\alpha_{j}$ for all $a_{k-2}\le j\le n$, we have

$H^0(s_{a_{k-2}}\cdots s_{n}s_{a_{k-1}}\cdots s_{n}v', \alpha_{n-1})=H^0(s_{a_{k-1}}\cdots s_{n}v', \alpha_{n-1}).$

Proceeding recursively we have 

$H^0(u_{1}, \alpha_{n-1})=
\bigoplus\limits_{i=1}^{a_{k-1}-2} 
(\mathbb{C}_{-(\beta_{i} + 2\alpha_{n} + \beta_{a_{k-1}-1})}\oplus \cdots \oplus \mathbb{C}_{-(\beta_{i} + 2\alpha_{n} + \beta_{i+1})}).$
\end{proof}
\begin{lem}\label{lemma 5.2.1} Let $3\le r \le k.$ Then $H^0(w_{r-2}s_{n}s_{a_{r-1}}s_{a_{r-1} + 1}\cdots s_{n + 2 - r}, \alpha_{n + 2 -r })_{\mu}\neq0$ if $\mu$ is of the form $\mu=-(\beta_{j} + \alpha_{n} )$ for some $a_{r-1}\le j\le a_{r-2} -1 .$  
	
\end{lem}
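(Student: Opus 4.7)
The plan is to compute $H^0(u,\alpha_{n+2-r})$ for $u=w_{r-2}s_n s_{a_{r-1}}\cdots s_{n+2-r}$ iteratively via Lemma~\ref{lemma 1.2}(1) and Lemma~\ref{lemma1.3}, processing the simple reflections from right to left and tracking the $T$-weights step by step, in the style of the computations in Lemmas~\ref{lemma 5.1} and~\ref{lemma 5.2}.

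First I would rewrite $u$. Since the final factor of $w_{r-2}$ is $s_{n-1}$, one has $w_{r-2}\cdot s_n=(s_{a_1}\cdots s_n)(s_{a_2}\cdots s_n)\cdots(s_{a_{r-2}}\cdots s_n)$, and hence
\[
u=(s_{a_1}\cdots s_n)(s_{a_2}\cdots s_n)\cdots(s_{a_{r-2}}\cdots s_n)\,(s_{a_{r-1}}s_{a_{r-1}+1}\cdots s_{n+2-r}).
\]
Call the last factor the suffix and the preceding $r-2$ factors the upper blocks, indexed $m=r-2,r-3,\ldots,1$. An induction on the length of the suffix, parallel to the opening step of the proof of Lemma~\ref{lemma 5.1}(1), shows that $H^0(s_{a_{r-1}}\cdots s_{n+2-r},\alpha_{n+2-r})$ contains the one-dimensional summand $\mathbb{C}_{-(\alpha_j+\alpha_{j+1}+\cdots+\alpha_{n+2-r})}$ for every $a_{r-1}\le j\le n+2-r$.

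Next I would process the upper blocks from $m=r-2$ down to $m=2$. Within $(s_{a_m}\cdots s_n)$ read right-to-left, the leading operators $s_n,s_{n-1},\ldots,s_{n+2-m}$ all pair to $0$ with the current weight $-(\alpha_j+\cdots+\alpha_{n-m})$ and preserve it; then $s_{n+1-m}$ pairs as $-\langle\alpha_{n-m},\alpha_{n+1-m}\rangle=1$ and, by Lemma~\ref{lemma 1.1}(3), extends to the composition factor $\mathbb{C}_{-(\alpha_j+\cdots+\alpha_{n+1-m})}$; the subsequent $s_{n-m}$ eliminates the original (pairing $-1$) and preserves the extension (pairing $0$); and the remaining reflections $s_{n-1-m},\ldots,s_{a_m}$ all pair to $0$ against the extended weight, since $j<a_{r-2}\le a_m$ forces every such index $i$ to satisfy $i>j+1$. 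Iterating $m=r-2,\ldots,2$, the intermediate module contains $\mathbb{C}_{-(\alpha_j+\cdots+\alpha_{n-1})}$ as a summand for each $a_{r-1}\le j\le a_{r-2}-1$ (this middle stage being vacuous when $r=3$).

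Finally, in block $m=1$ the leading $s_n$ acts on $\mathbb{C}_{-(\alpha_j+\cdots+\alpha_{n-1})}$ with pairing $-\langle\alpha_{n-1},\alpha_n\rangle=2$, and Lemma~\ref{lemma 1.1}(3) produces the composition factor $\mathbb{C}_{-(\alpha_j+\cdots+\alpha_n)}=\mathbb{C}_{-(\beta_j+\alpha_n)}$. The remaining operators $s_{n-1},\ldots,s_{a_1}$ of block $1$ all pair to $0$ against $-(\beta_j+\alpha_n)$: the only simple roots with non-zero Cartan pairing against this weight are $\alpha_{j-1}$ (pairing $+1$) and $\alpha_j$ (pairing $-1$), and every remaining index $i$ satisfies $i\ge a_1>a_{r-2}>j$. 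A routine check that $\mathbb{C}_{-(\beta_j+\alpha_n)}$ is a one-dimensional $B_{\alpha_i}$-summand of the current module at each step (i.e., that neither $-(\beta_j+\alpha_n)+\alpha_i$ nor $-(\beta_j+\alpha_n)-\alpha_i$ is another weight), together with Lemma~\ref{lemma1.3}(2), shows that this summand persists, giving $H^0(u,\alpha_{n+2-r})_{-(\beta_j+\alpha_n)}\neq 0$ for every $a_{r-1}\le j\le a_{r-2}-1$.

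The main obstacle will be the careful weight-by-weight bookkeeping in the middle stage: I will need to identify the correct indecomposable $B_{\alpha_i}$-summand structure at each intermediate step to certify that the targeted weight $-(\beta_j+\alpha_n)$ is neither absorbed into a larger summand that vanishes under $H^0(s_{\alpha_i},\cdot)$ nor inadvertently annihilated. These routine but intricate checks are entirely analogous to the line-by-line computations in the proofs of Lemmas~\ref{lemma 5.1} and~\ref{lemma 5.2}, to which the details reduce.
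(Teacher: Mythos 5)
Your proposal is correct and follows essentially the same route as the paper: both process the reduced word from the right via the recursion $H^0(w,V)\simeq H^0(s_{\gamma},H^0(s_{\gamma}w,V))$, track the weight string $-(\alpha_j+\cdots+\alpha_{n+2-r})\rightarrow -(\alpha_j+\cdots+\alpha_{n+3-r})\rightarrow\cdots\rightarrow -\beta_j\rightarrow -(\beta_j+\alpha_n)$ block by block, and use Lemmas \ref{lemma 1.1}--\ref{lemma 1.4} to control the indecomposable $B_{\alpha_i}$-summands (the paper simply carries the entire module at each stage, which is precisely the bookkeeping you defer). Two harmless index slips: the trailing indices only satisfy $i\ge j+1$ rather than $i>j+1$ (the pairing is still $0$ at $i=j+1$), and for $r=3$ one has $a_1=a_{r-2}$, so the chain should read $i\ge a_1\ge a_{r-2}>j$.
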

\begin{proof}
 By applying SES repeatedly, it is easy to see that

  $H^0(s_{n}s_{a_{r-1}}s_{a_{r-1}+1}\cdots s_{n+2-r}, \alpha_{n+2-r})=\mathbb{C}h({\alpha_{n+2-r}}) \oplus (\bigoplus\limits_{j=a_{r-1}}^{n+2-r}\mathbb{C}_{-\gamma_{j,n+2-r}}),$ where $\gamma_{j,j'}=(\alpha_{j}+\dots+\alpha_{j'})$ for $j'\ge j$. 

 Let $V_{1}=H^0(s_{n}s_{a_{r-1}}s_{a_{r-1}+1}\cdots s_{n+2-r}, \alpha_{n+2-r}).$ We next calculate $H^0(s_{a_{r-2}}\cdots s_{n-1}, V_{1}).$
 Since $n\ge a_{1}>a_{2}>\dots>a_{k}=1,$ we have $a_{i}\le n+1-i$ for all $1\le i\le k.$ Assume $l\ge n+4-r,$ then $\langle \gamma_{j, n+2-r} , \alpha_{l} \rangle=0$ for all $a_{r-1}\le j\le n+2-r.$ By Lemma \ref{lemma1.3}(2) we have $H^0(\tilde{L}_{\alpha_{l}}/\tilde{B}_{\alpha_{l}}, V_{1})=V_{1}$
 for all $l\ge n+4-r.$
 
 Therefore we have $H^0(s_{a_{r-2}}\cdots s_{n-1}, V_{1})=H^0(s_{a_{r-2}}\cdots s_{n+2-r}s_{n+3-r}, V_{1}).$
 
 Note that, since $\langle -\gamma_{j,n+2-r}, \alpha_{n+3-r}\rangle=1$ for all $a_{r-1}\le j\le n+2-r,$ by Lemma \ref{lemma1.3}(2) we have  $H^0(s_{n+3-r}, V_{1})=\mathbb{C}h({\alpha_{n+2-r}}) \oplus (\bigoplus\limits_{j=a_{r-1}}^{n+2-r}(\mathbb{C}_{-\gamma_{j, n+2-r}} \oplus \mathbb{C}_{-\gamma_{j, n+3-r}} )).$

Since $\mathbb{C}h({\alpha_{n+2-r}}) \oplus \mathbb{C}_{-\gamma_{n+2-r, n+2-r}}$ is  an indecomposable two dimensional $B_{\alpha_{n+2-r}}$-module, by Lemma \ref{lemma 1.4},
$\mathbb{C}h({\alpha_{n+2-r}}) \oplus \mathbb{C}_{-\gamma_{n+2-r, n+2-r}}=V\otimes \mathbb{C}_{-\omega_{n+2-r}},$ where $V$ is the irreducible two dimensional $\tilde{L}_{\alpha_{n+2-r}}$-module.

Since $\langle \gamma_{j, n+2-r}, \alpha_{n+2-r}\rangle=-1,$ for all $a_{r-1}\le j\le n+1-r,$ by Lemma \ref{lemma1.3}(4) we have 
 
$H^0(\tilde{L}_{\alpha_{n+2-r}}/\tilde{B}_{\alpha_{n+2-r}}, \mathbb{C}_{\gamma_{j, n+2-r}})=0$  for all $a_{r-1}\le j\le n+1-r.$

From the above discussion, we have 

$H^0(s_{n+2-r}, H^0(s_{n+3-r}, V_{1}))=\bigoplus\limits_{j=a_{r-1}}^{n+1-r}\mathbb{C}_{-\gamma_{j, n+3-r}}.$ 
 
Since $\langle \gamma_{n+1-r, n+3-r}, \alpha_{n+1-r}\rangle=-1,$ by Lemma \ref {lemma1.3}(4), we have 

$H^0(\tilde{L}_{\alpha_{n+1-r}}/\tilde{B}_{\alpha_{n+1-r}}, \mathbb{C}_{\gamma_{n+1-r, n+3-r}})=0.$

Moreover, $\langle \gamma_{j, n+3-r}, \alpha_{n+1-r}\rangle=0$ for all $a_{r-1}\le j\le n-r.$ 
Therefore we have

$H^0(s_{n+1-r}, H^0(s_{n+2-r}, H^0(s_{n+3-r}, V_{1})))=\bigoplus\limits_{j=a_{r-1}}^{n-r}\mathbb{C}_{-\gamma_{j, n+3-r}}.$

Proceeding recursively we have 

$H^0(s_{a_{r-2}}\cdots s_{n-1}, V_{1})=\bigoplus\limits_{j=a_{r-1}}^{a_{r-2}-1}\mathbb{C}_{-\gamma_{j, n+3-r}}$ and $H^0(s_{n}s_{a_{r-2}}\cdots s_{n-1}, V_{1})=\bigoplus\limits_{j=a_{r-1}}^{a_{r-2}-1}\mathbb{C}_{-\gamma_{j, n+3-r}}.$

Let $V_{2}= H^0(s_{n}s_{a_{r-2}}\cdots s_{n-1}, V_{1}).$ Similarly, we have $H^0(s_{a_{r-3}}\cdots s_{n-1}, V_{2})=\bigoplus\limits_{j=a_{r-1}}^{a_{r-2}-1}\mathbb{C}_{-\gamma_{j, n+4-r}}$ and $H^0(s_{n}s_{a_{r-3}}\cdots s_{n-1}, V_{2})=\bigoplus\limits_{j=a_{r-1}}^{a_{r-2}-1}\mathbb{C}_{-\gamma_{j, n+4-r}}.$ 

Proceeding recursively  we have 

$V_{r-2}=H^0(s_{a_{2}}\cdots s_{n-1}, V_{r-3})=\bigoplus\limits_{j=a_{r-1}}^{a_{r-2}-1}\mathbb{C}_{-\gamma_{j, n-1}}$, 

where $V_{r-3}=H^0((s_{n}s_{3}\cdots s_{n})\cdots(s_{a_{r-2}}\cdots s_{n+2-r}), \alpha_{n+2-r}).$

Note that $\gamma_{j, n-1}=\beta_{j}$. 
Since $\langle -\alpha_{n-1}, \alpha_{n} \rangle=2,$ by Lemma \ref{lemma1.3}(2) and Lemma \ref{lemma 1.4}, we have 

$$H^0(s_{n}, V_{r-2})=\bigoplus\limits_{j=a_{r-1}}^{a_{r-2}-1} (\mathbb{C}_{-\beta_{j}}\oplus \mathbb{C}_{-(\beta_{j}+\alpha_{n})}\oplus\mathbb{C}_{-(\beta_{j} +2\alpha_{n})}).$$

 Moreover, $\langle -\beta_{j}, \alpha_{n-1} \rangle=-1$, $\langle -(\beta_{j}+\alpha_{n}), \alpha_{n-1}\rangle=0$ and $\langle -(\beta_{j}+ 2\alpha_{n}), \alpha_{n-1}\rangle=1$ for all $a_{r-1}\le j\le a_{r-2}-1$. 
 
Therefore by Lemma \ref{lemma1.3}(2), Lemma \ref{lemma1.3}(4) and Lemma \ref{lemma 1.4} we have

$$H^0(s_{n-1}, H^0(s_{n}, V_{r-2}))=\bigoplus\limits_{j=a_{r-1}}^{a_{r-2}-1}(\mathbb{C}_{-(\beta_{j}+\alpha_{n})}\oplus \mathbb{C}_{-(\beta_{j}+2\alpha_{n})}\oplus \mathbb{C}_{-(\beta_{j}+2\alpha_{n}+\beta_{n-1})}).$$

Proceeding recursively we have 

$H^0(s_{a_{1}}, H^0(s_{a_{1}+1}\cdots s_{n}, V_{r-2}))=\bigoplus\limits_{j=a_{r-1}}^{a_{r-2}-1}(\mathbb{C}_{-(\beta_{j}+\alpha_{n})} \oplus \mathbb{C}_{-(\beta_{j}+2\alpha_{n})} \oplus \mathbb{C}_{-(\beta_{j} + 2\alpha_{n} +\beta_{n-1})} \oplus \dots \oplus \mathbb{C}_{-(\beta_{j}+2\alpha_{n}+\beta_{a_{1}})}).$

Hence the proof of the lemma follows.

\end{proof}

\begin{lem}\label{lemma 5.3}
	
Let $3\le r \le k.$ Then $H^0(w_{r-1}s_{n}, \alpha_{n})_{\mu}\neq 0$ if and only if $\mu$ is of the form $\mu=-(\beta_{j} + \alpha_{n}),$ for some $a_{r-1}\le j \le a_{r-2} - 1.$ 

\end{lem}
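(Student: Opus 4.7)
The plan is to compute $H^{0}(w_{r-1}s_{n},\alpha_{n})$ explicitly using the reduced expression
\[
w_{r-1}s_{n}=[a_{1},n][a_{2},n]\cdots[a_{r-1},n]
\]
(a prefix of the expression of $c^{r-1}$ given in Lemma~\ref{lemma 3.4}) and iterating the short exact sequence of Lemma~\ref{lemma 1.2} right-to-left. First I would compute the innermost factor $H^{0}([a_{r-1},n],\alpha_{n})$: starting from the three-dimensional $L_{\alpha_{n}}$-module $H^{0}(s_{n},\alpha_{n})=\mathbb{C}_{\alpha_{n}}\oplus\mathbb{C}h(\alpha_{n})\oplus\mathbb{C}_{-\alpha_{n}}$ and applying $s_{n-1},s_{n-2},\ldots,s_{a_{r-1}}$ one at a time via Lemma~\ref{lemma1.3}, the factor $\mathbb{C}_{\alpha_{n}}$ is killed at the $s_{n-1}$ step (since $\langle\alpha_{n},\alpha_{n-1}\rangle=-1$) and each subsequent step produces a new line of the form $\mathbb{C}_{-(\beta_{j}+\alpha_{n})}$, giving
\[
H^{0}([a_{r-1},n],\alpha_{n})=\mathbb{C}h(\alpha_{n})\oplus\mathbb{C}_{-\alpha_{n}}\oplus\bigoplus_{j=a_{r-1}}^{n-1}\mathbb{C}_{-(\beta_{j}+\alpha_{n})}.
\]

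The second step is to apply $[a_{r-2},n]=s_{a_{r-2}}\cdots s_{n}$ right-to-left. Two ingredients enter: (i) $\mathbb{C}h(\alpha_{n})\oplus\mathbb{C}_{-\alpha_{n}}$ is an indecomposable $B_{\alpha_{n}}$-module (by the analogue of the argument used in the proof of Lemma~\ref{lemma 5.1}), so by Lemma~\ref{lemma 1.4} it equals $V\otimes\mathbb{C}_{-\omega_{n}}$ for the two-dimensional irreducible $\tilde{L}_{\alpha_{n}}$-module $V$; since $\langle -\omega_{n},\alpha_{n}\rangle=-1$, Lemma~\ref{lemma1.3}(4) kills it under the first $s_{n}$; (ii) using $\langle\alpha_{n-1},\alpha_{n}\rangle=-2$ and $\langle\alpha_{n},\alpha_{n-1}\rangle=-1$, a direct pairing calculation yields
\[
\langle -(\beta_{j}+\alpha_{n}),\alpha_{i}\rangle=\begin{cases} 0 & \text{if }i=n,\\ -1 & \text{if }1\le i\le n-1\text{ and }j=i,\\ +1 & \text{if }1\le i\le n-2\text{ and }j=i+1,\\ 0 & \text{otherwise.}\end{cases}
\]
Reading $s_{n},s_{n-1},\ldots,s_{a_{r-2}}$ in order, at each step $s_{i}$ the summand with index $j=i$ is killed via Lemma~\ref{lemma1.3}(4), and no new summand appears, because the only potential source of expansion---the summand with $j=i+1$---has already been eliminated at the preceding step $s_{i+1}$. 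This leaves
\[
H^{0}([a_{r-2},n][a_{r-1},n],\alpha_{n})=\bigoplus_{j=a_{r-1}}^{a_{r-2}-1}\mathbb{C}_{-(\beta_{j}+\alpha_{n})}.
\]

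Finally, for each remaining left factor $[a_{m},n]$ with $1\le m\le r-3$ (vacuous when $r=3$), every simple reflection $s_{i}$ appearing in it satisfies $i\ge a_{m}\ge a_{r-3}\ge a_{r-2}+1$, so for any $j$ in the surviving range $[a_{r-1},a_{r-2}-1]$ one has $j\le a_{r-2}-1\le i-2$, forcing $\langle -(\beta_{j}+\alpha_{n}),\alpha_{i}\rangle=0$; Lemma~\ref{lemma1.3}(2) then preserves every summand intact. Combining the three steps yields the claimed weight description of $H^{0}(w_{r-1}s_{n},\alpha_{n})$. The main obstacle I anticipate is the careful bookkeeping in the second step---verifying rigorously that the kills at $j=i$ and the would-be expansions at $j=i+1$ interleave so that no spurious weight survives---together with the identification of the indecomposable summand $\mathbb{C}h(\alpha_{n})\oplus\mathbb{C}_{-\alpha_{n}}$ inside the iteratively constructed module.
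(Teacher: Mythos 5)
Your proposal is correct and follows essentially the same route as the paper: peel off the reduced expression $[a_1,n][a_2,n]\cdots[a_{r-1},n]$ one reflection at a time via the SES and Lemmas \ref{lemma1.3} and \ref{lemma 1.4}, observe that the block $[a_{r-2},n]$ successively kills the summands $\mathbb{C}_{-(\beta_j+\alpha_n)}$ with $j\ge a_{r-2}$ (the top index dying at each step before it could regenerate a lower one), and that the blocks $[a_m,n]$ with $m\le r-3$ act trivially because all the relevant pairings vanish. The only divergence is your intermediate formula for $H^0([a_{r-1},n],\alpha_n)$, which (correctly) retains the indecomposable $B_{\alpha_n}$-summand $\mathbb{C}h(\alpha_n)\oplus\mathbb{C}_{-\alpha_n}$ that the paper's displayed formula omits; since that summand is $V\otimes\mathbb{C}_{-\omega_n}$ and is annihilated by the first $H^0(s_n,-)$ of the next block, the two computations coincide from that point on and yield the same conclusion.
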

\begin{proof}
 By applying SES repeatedly, it is easy to see that

 $H^0(s_{a_{r-1}}\cdots s_{n-1}s_{n}, \alpha_{n})=\bigoplus\limits_{j=a_{r-1}}^{n-1}\mathbb{C}_{-(\beta_{j} + \alpha_{n})}.$

Let $V_{1}=H^0(s_{a_{r-1}}\cdots s_{n-1}s_{n}, \alpha_{n}).$
 Since $\langle -(\beta_{j}+\alpha_{n}) , \alpha_{n} \rangle=0,$ we have $H^0(s_{n}, V_{1})=V_{1}.$

Since $\langle -(\beta_{n-1}+\alpha_{n}), \alpha_{n-1} \rangle=-1$
and  $\langle -(\beta_{j}+\alpha_{n}, \alpha_{n-1})\rangle=0,$ for all $a_{r-1}\le j\le n-2,$ by Lemma \ref{lemma1.3}(2), Lemma \ref{lemma1.3}(4) and Lemma \ref{lemma 1.4} we have 

$$H^0(s_{n-1}, V_{1})=\bigoplus\limits_{j=a_{r-1}}^{n-2}\mathbb{C}_{-(\beta_{j} +\alpha_{n})}.$$

Proceeding recursively we have 

$$H^0(s_{a_{r-2}}\cdots s_{n-1}s_{n}, V_{1})=\bigoplus\limits_{j=a_{r-1}}^{a_{r-2}-1}\mathbb{C}_{-(\beta_{j} +\alpha_{n})}.$$
 
Since $n\ge a_{1}>a_{2}>\dots>a_{k}=1$, we see that $\langle -(\beta_{j}+ \alpha_{n}), \alpha_{t}\rangle=0$,  for all $a_{r-1}\le j\le a_{r-2}-1$ and for all $a_{r-3}\le t\le n,$ therefore by Lemma \ref{lemma1.3}(2) and Lemma \ref{lemma 1.4}, we have

$$H^0(s_{a_{r-3}}\cdots s_{n}s_{a_{r-2}}\cdots s_{n}s_{a_{r-1}}\cdots s_{n-1}s_{n}, \alpha_{n})=\bigoplus\limits_{j=a_{r-1}}^{a_{r-2}-1}\mathbb{C}_{-(\beta_{j} +\alpha_{n})}.$$ 

 Since $n\ge a_{1}>a_{2}>\dots>a_{k}=1$, we see that  $\langle -(\beta_{j}+ \alpha_{n}), \alpha_{t}\rangle=0$ for all $a_{r-1}\le j\le a_{r-2}-1$ and for all $a_{r-4}\le t\le n.$ By Lemma \ref{lemma1.3}(2) and Lemma \ref{lemma 1.4}, we have

$$H^0(s_{a_{r-4}}\cdots s_{n}s_{a_{r-3}}\cdots s_{n}s_{a_{r-2}}\cdots s_{n}s_{a_{r-1}}\cdots s_{n-1}s_{n}, \alpha_{n})=\bigoplus\limits_{j=a_{r-1}}^{a_{r-2}-1}\mathbb{C}_{-(\beta_{j} +\alpha_{n})}.$$

Proceeding recursively we have 

$$H^0(w_{r-1}s_{n}, \alpha_{n})=\bigoplus\limits_{j=a_{r-1}}^{a_{r-2}-1}\mathbb{C}_{-(\beta_{j} +\alpha_{n})}.$$

Hence the lemma follows. 
\end{proof}

\begin{lem}\label{lemma 5.4} If $\mu$ is of the form $\mu=-(\beta_{j} + \alpha_{n} )$ for some $1 \le j\le a_{k-1} -1 ,$ then we have $H^0(w_{k-1}s_{n}s_{1}s_{2}\cdots s_{n + 1 - k}, \alpha_{n + 1-k })_{\mu}\neq0.$
	
\end{lem}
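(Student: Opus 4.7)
The strategy is to mirror the proof of Lemma \ref{lemma 5.2.1} at the boundary value $r = k+1$, which is excluded there by the hypothesis $r \le k$. Since $a_k = 1$, the tail $s_n s_1 s_2 \cdots s_{n+1-k}$ and the prefix $w_{k-1}$ play the roles of $s_n s_{a_{r-1}} \cdots s_{n+2-r}$ and $w_{r-2}$ in Lemma \ref{lemma 5.2.1}. First, iterated application of SES using Lemma \ref{lemma 1.1}, starting from $\alpha_{n+1-k}$ and processing $s_{n+1-k}, s_{n-k}, \ldots, s_1$ in turn (during which the positive weight $\alpha_{n+1-k}$ gets killed when $s_{n-k}$ is applied, since $\langle \alpha_{n+1-k}, \alpha_{n-k}\rangle = -1$), followed by $s_n$ (which acts trivially because the intermediate weights lie in $\mathrm{span}\{\alpha_1, \ldots, \alpha_{n+1-k}\}$ and hence are orthogonal to $\alpha_n$), yields
\[
V_1' := H^0(s_n s_1 s_2 \cdots s_{n+1-k}, \alpha_{n+1-k}) = \mathbb{C}h(\alpha_{n+1-k}) \oplus \bigoplus_{j=1}^{n+1-k} \mathbb{C}_{-\gamma_{j, n+1-k}}.
\]

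Next, I feed $V_1'$ into $w_{k-1}$ block by block, copying the descending induction of Lemma \ref{lemma 5.2.1} verbatim. The innermost (short) block $s_{a_{k-1}} \cdots s_{n-1}$ kills the indecomposable summand $\mathbb{C}h(\alpha_{n+1-k}) \oplus \mathbb{C}_{-\alpha_{n+1-k}}$ via Lemma \ref{lemma1.3}(4) together with Lemma \ref{lemma 1.4}, while Lemma \ref{lemma1.3}(2) lifts each surviving weight $-\gamma_{j, n+1-k}$ (for $1 \le j \le a_{k-1}-1$) to $-\gamma_{j, n+2-k}$. Each subsequent block $s_n s_{a_i} \cdots s_{n-1}$ for $i = k-2, k-3, \ldots, 2$ further shifts the chain by one simple root (the final $s_n$ in the block acts trivially by orthogonality). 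After processing all these blocks, the cohomology reduces to $\bigoplus_{j=1}^{a_{k-1}-1} \mathbb{C}_{-\gamma_{j, n-1}} = \bigoplus_{j=1}^{a_{k-1}-1} \mathbb{C}_{-\beta_j}$.

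Finally, I would apply the outermost block $s_{a_1} s_{a_1+1} \cdots s_n$: since $\langle -\beta_j, \alpha_n\rangle = 2$, the first reflection $s_n$ enlarges each $\mathbb{C}_{-\beta_j}$ to a $3$-dimensional $B_{\alpha_n}$-module with composition factors $\mathbb{C}_{-\beta_j}$, $\mathbb{C}_{-(\beta_j+\alpha_n)}$, $\mathbb{C}_{-(\beta_j+2\alpha_n)}$ by Lemma \ref{lemma 1.1}(3); the subsequent reflections $s_{n-1}, \ldots, s_{a_1}$ extend this chain to the further weights $-(\beta_j + 2\alpha_n + \beta_{n-1}), \ldots, -(\beta_j + 2\alpha_n + \beta_{a_1})$, exactly as in the closing computation of Lemma \ref{lemma 5.2.1}. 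The weight $-(\beta_j + \alpha_n)$ therefore survives in the final $H^0$ for every $1 \le j \le a_{k-1}-1$, which establishes the lemma. The main point requiring verification is that the boundary condition $a_k = 1$ does not perturb any of the inner-product computations used in the recursive SES reductions; since those pairings depend only on the Dynkin diagram of type $B_n$ and on the relative positions of the $a_i$ (and not on the specific fact $a_k = 1$), the argument of Lemma \ref{lemma 5.2.1} transfers without modification.
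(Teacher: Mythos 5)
Your proposal is correct and follows essentially the same route as the paper: the paper's own proof of Lemma \ref{lemma 5.4} is precisely the Lemma \ref{lemma 5.2.1} computation transcribed at the boundary value $a_{r-1}=a_{k}=1$, producing the same intermediate modules $V_{1}=\mathbb{C}h(\alpha_{n+1-k})\oplus\bigoplus_{j=1}^{n+1-k}\mathbb{C}_{-\gamma_{j,n+1-k}}$, the same block-by-block shifts down to $\bigoplus_{j=1}^{a_{k-1}-1}\mathbb{C}_{-\beta_{j}}$, and the same final expansion under $s_{n},s_{n-1},\ldots,s_{a_{1}}$ exhibiting the weight $-(\beta_{j}+\alpha_{n})$.
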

\begin{proof}
By applying SES repeatedly, it is easy to see that 
	
	$H^0(s_{n}s_{a_{1}}s_{2}\cdots s_{n+1-k}, \alpha_{n+1-k})=\mathbb{C}h({\alpha_{n+1-k}}) \oplus (\bigoplus\limits_{j=1}^{n+1-k}\mathbb{C}_{-\gamma_{j,n+1-k}}),$
	
where $\gamma_{j,j'}=(\alpha_{j}+\dots+\alpha_{j'})$ for $j'\ge j$.

Let $V_{1}=H^0(s_{n}s_{1}s_{2}\cdots s_{n+1-k}, \alpha_{n+1-k}).$ We next calculate $H^0(s_{a_{k-1}}\cdots s_{n-1}, V_{1}).$

Since $n\ge a_{1}>a_{2}>\dots>a_{k}=1,$ we have $a_{i}\le n+1-i$ for all $1\le i\le k.$
Moreover, $\langle \gamma_{j, n+1-k} , \alpha_{l} \rangle=0$ for all $1\le j\le n+1-k$  and for all $l\ge n+3-k.$

Therefore by using Lemma \ref {lemma1.3}(2) and Lemma \ref{lemma 1.4}, we have 

 $$H^0(s_{a_{k-1}}\cdots s_{n-1}, V_{1})=H^0(s_{a_{k-1}}\cdots s_{n+1-k}s_{n+2-k}, V_{1}).$$

Since $\langle  -\gamma_{j, n+1-k},\alpha_{n+2-k}\rangle=1$ for all $1\le j\le n+1-k,$ by  using Lemma \ref {lemma1.3}(2) and Lemma \ref{lemma 1.4}, we have  

$H^0(s_{n+2-k}, V_{1})=\mathbb{C}h({\alpha_{n+1-k}}) \oplus (\bigoplus\limits_{j=1}^{n+1-k}(\mathbb{C}_{-\gamma_{j,n+1-k}} \oplus \mathbb{C}_{-\gamma_{j, n+2-k}} )).$
	
Since $\mathbb{C}h({\alpha_{n+1-k}}) \oplus \mathbb{C}_{-\gamma_{n+1-k, n+1-k}}$ is an indecomposable two dimensional $B_{\alpha_{n+1-k}}$-module, by Lemma \ref{lemma 1.4} we have $\mathbb{C}h({\alpha_{n+1-k}}) \oplus \mathbb{C}_{-\gamma_{n+1-k, n+1-k}}=V\otimes \mathbb{C}_{-\omega_{n+1-k}}$, where $V$ is the irreducible two dimensional $\tilde{L}_{\alpha_{n+1-k}}$-module.

 By Lemma \ref{lemma1.3}(4) we have 

$H^0(\tilde{L}_{\alpha_{n+1-k}}/\tilde{B}_{\alpha_{n+1-k}}, \mathbb{C}h({\alpha_{n+1-k}})\oplus\mathbb{C}_{-\gamma_{n+1-k, n+1-k}})=0.$

Since $\langle \gamma_{j, n+1-k}, \alpha_{n+1-k}\rangle=-1$ for all $1\le j\le n-k,$ by Lemma \ref{lemma1.3}(4) we have

$H^0(\tilde{L}_{\alpha_{n+1-k}}/\tilde{B}_{\alpha_{n+1-k}}, \mathbb{C}_{\gamma_{j, n+1-k}})=0$ for all $1\le j\le n-k.$ 

From the above discussion, we have
	
	$H^0(s_{n+1-k}, H^0(s_{n+2-k}, V_{1}))=\bigoplus\limits_{j=1}^{n-k}\mathbb{C}_{-\gamma_{j,n+2-k}}.$ 
	
Since $\langle \gamma_{j, n+2-k}, \alpha_{n-k}\rangle=0$ for all $1\le j\le n-k-1,$ and  $\langle \gamma_{n-k, n+2-k}, \alpha_{n-k}\rangle=-1,$ by using  Lemma \ref {lemma1.3}(2), Lemma \ref {lemma1.3}(2)(4) and Lemma \ref{lemma 1.4}, we have

	$$H^0(s_{n-k}, H^0(s_{n+1-k}, H^0(s_{n+2-k}, V_{1})))=\bigoplus\limits_{j=1}^{n-k-1}\mathbb{C}_{-\gamma_{j,n+2-k}}.$$
	
Proceeding recursively we have

 $$H^0(s_{n}s_{a_{k-1}}\cdots s_{n-1}, V_{1})=\bigoplus\limits_{j=1}^{a_{k-1}-1}\mathbb{C}_{-\gamma_{j,n+2-k}}.$$
	
Let $V_{2}= H^0(s_{n}s_{a_{k-1}}\cdots s_{n-1}, V_{1}).$ Then similarly, we have

	$$H^0(s_{a_{k-2}}\cdots s_{n-1}, V_{2})=\bigoplus\limits_{j=1}^{a_{k-1}-1}\mathbb{C}_{-\gamma_{j, n+3-k}}.$$
	
Proceeding recursively we have
	
	$$V_{k-1}=H^0(s_{a_{2}}\cdots s_{n-1}, V_{k-2})=\bigoplus\limits_{j=1}^{a_{k-1}-1}\mathbb{C}_{-\gamma_{j, n-1}}$$
	
	where $V_{k-2}=H^0((s_{n}s_{3}\cdots s_{n})\cdots(s_{a_{k-1}}\cdots s_{n+1-k}), \alpha_{n+1-k}).$
	
	Note that $\gamma_{j, n-1}=\beta_{j}$. Since $\langle -\alpha_{n-1}, \alpha_{n} \rangle= 2,$ by using Lemma \ref {lemma1.3}(2) and Lemma \ref {lemma 1.4} we have 
	
	$H^0(s_{n}, V_{k-1})=\bigoplus\limits_{j=1}^{a_{k-1}-1} (\mathbb{C}_{-\beta_{j}}\oplus \mathbb{C}_{-(\beta_{j}+\alpha_{n})}\oplus\mathbb{C}_{-(\beta_{j} +2\alpha_{n})}).$
	
 Moreover, $\langle -\beta_{j}, \alpha_{n-1} \rangle=-1,$  $\langle -(\beta_{j}+\alpha_{n}), \alpha_{n-1}\rangle=0$ and $\langle -(\beta_{j}+ 2\alpha_{n}), \alpha_{n-1}\rangle=1$, for all $1\le j\le a_{k-1}-1.$ 
 
 Therefore  by using  Lemma \ref {lemma1.3}(2), Lemma \ref {lemma1.3}(4) and Lemma \ref {lemma 1.4} we have 
	
	$$H^0(s_{n-1}, H^0(s_{n}, V_{k-1}))=\bigoplus\limits_{j=1}^{a_{k-1}-1}(\mathbb{C}_{-(\beta_{j}+\alpha_{n})}\oplus \mathbb{C}_{-(\beta_{j}+2\alpha_{n})}\oplus \mathbb{C}_{-(\beta_{j}+2\alpha_{n}+\beta_{n-1})}).$$
	
Proceeding recursively we have 
	
	$H^0(s_{a_{1}}, H^0(s_{a_{1}+1}\cdots s_{n}, V_{k-1}))=\bigoplus\limits_{j=1}^{a_{k-1}-1}(\mathbb{C}_{-(\beta_{j}+\alpha_{n})}\oplus\mathbb{C}_{-(\beta_{j}+2\alpha_{n})}\oplus\mathbb{C}_{-(\beta_{j} + 2\alpha_{n} +\beta_{n-1})} \oplus \dots \oplus \mathbb{C}_{-(\beta_{j}+2\alpha_{n}+\beta_{a_{1}})}).$
	
	Hence the proof of the lemma follows. 
	
\end{proof}

\begin{lem}\label{lemma 5.5}
	
 $H^0(w_{k}s_{n}, \alpha_{n})_{\mu}\neq 0$ if and only if $\mu$ is of the form $\mu=-(\beta_{j} + \alpha_{n})$ for some $1\le j \le a_{k-1} - 1.$ 
	
\end{lem}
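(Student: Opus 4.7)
My plan is to mimic the proof of Lemma \ref{lemma 5.3}, treating Lemma \ref{lemma 5.5} as the formal ``$r=k+1$'' case in which $a_{r-1}$ is replaced by $a_k=1$. The key observation is that
\[
w_k s_n \;=\; \Bigl(\prod_{m=1}^{k-1}\prod_{j=a_m}^{n} s_j\Bigr)\; s_1 s_2 \cdots s_{n-1} s_n,
\]
which has exactly the block structure of $w_{r-1}s_n$ from Lemma \ref{lemma 5.3}, now with the innermost block being $s_1 s_2 \cdots s_{n-1} s_n$ rather than $s_{a_{r-1}} \cdots s_{n-1} s_n$.

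First, I will apply the SES repeatedly, exactly as in the opening step of Lemma \ref{lemma 5.3}, to obtain
\[
V_1 := H^0(s_1 s_2 \cdots s_{n-1} s_n, \alpha_n) \;=\; \bigoplus_{j=1}^{n-1} \mathbb{C}_{-(\beta_j + \alpha_n)},
\]
which is simply the $a_{r-1}=1$ specialization of the computation carried out there. Next I process the block $\prod_{j=a_{k-1}}^n s_j$ acting on $V_1$: since $\langle -(\beta_j + \alpha_n), \alpha_n\rangle = 0$, Lemmas \ref{lemma1.3}(2) and \ref{lemma 1.4} give $H^0(s_n, V_1)=V_1$. I then descend through $s_{n-1}, s_{n-2}, \ldots, s_{a_{k-1}}$; at step $s_t$ the summand $\mathbb{C}_{-(\beta_t + \alpha_n)}$ is killed (since $\langle -(\beta_t + \alpha_n), \alpha_t\rangle = -1$, via Lemma \ref{lemma1.3}(4)), while the remaining summands survive unchanged (since $\langle -(\beta_j + \alpha_n), \alpha_t\rangle = 0$ for $j<t$). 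This will yield $\bigoplus_{j=1}^{a_{k-1}-1} \mathbb{C}_{-(\beta_j + \alpha_n)}$.

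Finally, I will show that each of the outer blocks $\prod_{j=a_m}^n s_j$ for $m=k-2, \ldots, 1$ preserves this cohomology. The key estimate is that for $1 \le j \le a_{k-1}-1$ and $a_m \le t \le n$ with $m \le k-2$, the strict chain $n \ge a_1 > \cdots > a_k = 1$ forces $t \ge a_{k-2} \ge a_{k-1}+1 \ge j+2$, and then a direct root-system computation in type $B_n$ gives $\langle -(\beta_j + \alpha_n), \alpha_t\rangle = 0$ (the endpoint cases $t=n-1$ and $t=n$ still give zero, from $2-1-1=0$ and $-2+2=0$ respectively). Invoking Lemmas \ref{lemma1.3}(2) and \ref{lemma 1.4} repeatedly then delivers
\[
H^0(w_k s_n, \alpha_n) \;=\; \bigoplus_{j=1}^{a_{k-1}-1} \mathbb{C}_{-(\beta_j + \alpha_n)},
\]
from which the ``if and only if'' assertion is immediate.

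I expect no serious obstacles: the argument parallels Lemma \ref{lemma 5.3} step by step, and the only care needed is tracking the $B_\alpha$-module structure at each stage (so that the direct-sum decompositions remain compatible with $H^0(s_\alpha, -)$, which is where Lemma \ref{lemma 1.4} plays its usual role) and the orthogonality check in the outer-block stage above. Both points are already handled by the techniques developed in the earlier lemmas.
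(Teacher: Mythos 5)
Your proposal is correct and follows essentially the same route as the paper: both compute $V_1=H^0(s_1\cdots s_{n-1}s_n,\alpha_n)=\bigoplus_{j=1}^{n-1}\mathbb{C}_{-(\beta_j+\alpha_n)}$, then process the block $[a_{k-1},n]$ by killing $\mathbb{C}_{-(\beta_t+\alpha_n)}$ at each step $s_t$ via the pairing $-1$ and Lemma \ref{lemma1.3}(4), and finally observe that all outer blocks act trivially because $\langle -(\beta_j+\alpha_n),\alpha_t\rangle=0$ for $j\le a_{k-1}-1$ and $t\ge a_{k-2}$. The paper's proof of Lemma \ref{lemma 5.5} is itself precisely the $a_{r-1}=1$ specialization of Lemma \ref{lemma 5.3}, which is exactly how you frame it.
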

\begin{proof}
By applying SES repeatedly it is easy to see that 
	
	$$H^0(s_{1}\cdots s_{n-1}s_{n}, \alpha_{n})=\bigoplus\limits_{j=1}^{n-1}\mathbb{C}_{-(\beta_{j} + \alpha_{n})}.$$
	
Let $V_{1}=H^0(s_{1}\cdots s_{n-1}s_{n}, \alpha_{n}).$
Since  $\langle -(\beta_{j}+\alpha_{n}) , \alpha_{n} \rangle=0,$ for all $1\le j\le n,$ by Lemma \ref {lemma1.3}(2) and Lemma \ref {lemma 1.4} we have $H^0(s_{n}, V_{1})=V_{1}.$
	
Moreover, $\langle -(\beta_{n-1}+\alpha_{n}), \alpha_{n-1} \rangle=-1$ and $\langle -(\beta_{j}+\alpha_{n}, \alpha_{n-1})\rangle=0$ for all $1\le j\le n-2.$ 
Therefore by using Lemma \ref {lemma1.3}(4), Lemma \ref {lemma1.3}(2) and Lemma \ref {lemma 1.4}, we have
	
	$$H^0(s_{n-1}, V_{1})=\bigoplus\limits_{j=1}^{n-2}\mathbb{C}_{-(\beta_{j} +\alpha_{n})}.$$
	
	Proceeding recursively we have

	$$H^0(s_{a_{k-1}}\cdots s_{n-1}s_{n}, V_{1})=\bigoplus\limits_{j=1}^{a_{k-1}-1}\mathbb{C}_{-(\beta_{j} +\alpha_{n})}.$$
	
Since $n\ge a_{1}>a_{2}>\dots>a_{k}=1$, we have $\langle -(\beta_{j}+ \alpha_{n}), \alpha_{t}\rangle=0$ for all $1\le j\le a_{k-1}-1$ and for all $a_{k-2}\le t\le n.$ 
	
By using Lemma \ref {lemma1.3}(2), Lemma \ref {lemma 1.4} we have
	
	$$H^0(s_{a_{k-2}}\cdots s_{n}s_{a_{k-1}}\cdots s_{n}s_{1}\cdots s_{n-1}s_{n}, \alpha_{n})=\bigoplus\limits_{j=1}^{a_{k-1}-1}\mathbb{C}_{-(\beta_{j} +\alpha_{n})}.$$ 
	
Similarly, since $n\ge a_{1}>a_{2}>\dots>a_{k}=1$, we have $\langle -(\beta_{j}+ \alpha_{n}), \alpha_{t}\rangle=0$ for all $1\le j\le a_{k-1}-1$ and for all $a_{k-3}\le t\le n,$ therefore 
by using Lemma \ref {lemma1.3}(2) and Lemma \ref {lemma 1.4} we have
	
	$$H^0(s_{a_{k-3}}\cdots s_{n}s_{a_{k-2}}\cdots s_{n}s_{a_{k-1}}\cdots s_{n}s_{1}\cdots s_{n-1}s_{n}, \alpha_{n})=\bigoplus\limits_{j=1}^{a_{k-1}-1}\mathbb{C}_{-(\beta_{j} +\alpha_{n})}.$$
	
Proceeding recursively we have 
	
	$$H^0(w_{k}s_{n},  \alpha_{n})=\bigoplus\limits_{j=1}^{a_{k-1}-1}\mathbb{C}_{-(\beta_{j} +\alpha_{n})}.$$
	
Hence the proof of the lemma follows. 
\end{proof}

\section{cohomology module $H^1$ of the relative tangent bundle}
In this section, we describe the weights of $H^1$ of the relative tangent bundle. Let $n\ge a_{1}>a_{2}>\ldots >a_{k-1}> a_{k}=1$ be a decreasing sequence of integers such that $k\ge 3.$ Fix $3\le r \le k.$
\begin{lem}\label{lemma 6.1} Let $v_{r}=s_{n}s_{a_{r}}\cdots s_{n-2},$ $v_{r-1}=s_{a_{r-1}}\cdots s_{n-1}$ and $v_{r-2}=s_{a_{r-2}}\cdots s_{n-1}s_{n}.$ Then we have
	
\begin{enumerate}

\item [(1)] $H^1(v_{r-1}v_{r}s_{n-1}, \alpha_{n-1}) =0.$ 

\item [(2)] Let $w=v_{r-2}v_{r-1}v_{r}s_{n-1}.$  $H^1(w, \alpha_{n-1})_{\mu}\neq 0$ if and only if $\mu$ is of the form

 $\mu=-(\beta_{t} + \alpha_{n})$ for some $a_{r-1}\le t \le a_{r-2}-1.$ In such a case,\\  dim$(H^1(w, \alpha_{n-1})_{\mu})=1.$ 

\end{enumerate}
\end{lem}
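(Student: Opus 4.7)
Both parts are to be proved by iteratively applying the short exact sequence (SES) from Section~2 to build the element up from shorter ones, leveraging the explicit $H^0$-computations of Section~4. For Part~(1), I would first use the commutativity of $s_n$ with $s_{a_r},\dots,s_{n-2}$ to rewrite $w'=v_{r-1}v_r s_{n-1}=(s_{a_{r-1}}\cdots s_{n-1}s_n)(s_{a_r}\cdots s_{n-1})$, which identifies $w'$ with the element $v$ of Lemma~\ref{lemma 5.1}(1). Build $w'$ up from $s_{n-1}$ by adjoining simple reflections to the left in three stages: (A)~adjoin $s_{n-2},\dots,s_{a_r}$; (B)~adjoin $s_n$; (C)~adjoin $s_{n-1},\dots,s_{a_{r-1}}$. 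The base case $H^1(s_{n-1},\alpha_{n-1})=0$ holds by Lemma~\ref{lemma1.3}(2). In Stages A and C every reflection $s_j$ has $j\neq n$, so Lemma~\ref{lemma 2} kills the $H^1(s_j,H^0(\cdot,\alpha_{n-1}))$ term of the SES and the induction forces $H^1(\cdot,\alpha_{n-1})=0$. In Stage~B, the formula $H^0(u,\alpha_{n-1})=\mathbb{C}h(\alpha_{n-1})\oplus\bigoplus_{j=a_r}^{n-1}\mathbb{C}_{-\beta_j}$ (from the proof of Lemma~\ref{lemma 5.1}(1)) shows that each weight space is a standalone indecomposable $B_{\alpha_n}$-summand pairing non-negatively with $\alpha_n$, so $H^1(s_n,H^0(u,\alpha_{n-1}))=0$ by Lemma~\ref{lemma1.3}(2). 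This yields $H^1(w',\alpha_{n-1})=0$.

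\textbf{Part~(2).} Write $w=v_{r-2}w'$ with $v_{r-2}=s_{a_{r-2}}\cdots s_{n-1}s_n$. Starting from Part~(1) and the known $H^0(w',\alpha_{n-1})$ from Lemma~\ref{lemma 5.1}(1), build $w$ up by left-multiplying successively by $s_n,s_{n-1},\dots,s_{a_{r-2}}$ and applying the SES at each step. For the first step, $H^1(w',\alpha_{n-1})=0$ collapses the SES to $H^1(s_n w',\alpha_{n-1})=H^1(s_n,H^0(w',\alpha_{n-1}))$. Now analyze the indecomposable $B_{\alpha_n}$-summands of $H^0(w',\alpha_{n-1})$: the $2$-dimensional summands $\mathbb{C}_{-(\beta_i+\alpha_n)}\oplus\mathbb{C}_{-(\beta_i+2\alpha_n)}$ for $a_r\le i\le a_{r-1}-1$ are of the form $V\otimes\mathbb{C}_{-\omega_n}$ (as established in the proof of Lemma~\ref{lemma 5.2}(1)) and contribute $0$ by Lemma~\ref{lemma1.3}(4); the weights $-(\beta_i+2\alpha_n+\beta_t)$ are orthogonal to $\alpha_n$ and contribute nothing; the standalone $1$-dimensional summands $\mathbb{C}_{-(\beta_i+2\alpha_n)}$ for $a_{r-1}\le i\le n-1$ satisfy $\langle\cdot,\alpha_n\rangle=-2$, and each contributes $\mathbb{C}_{-(\beta_i+\alpha_n)}$ via Lemma~\ref{lemma1.3}(3) using $s_n\cdot(-(\beta_i+2\alpha_n))=-(\beta_i+\alpha_n)$. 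Hence $H^1(s_n w',\alpha_{n-1})=\bigoplus_{i=a_{r-1}}^{n-1}\mathbb{C}_{-(\beta_i+\alpha_n)}$. For each subsequent step with $s_j$ ($j=n-1,\dots,a_{r-2}$), Lemma~\ref{lemma 2} kills the $H^1(s_j,H^0(\cdot,\alpha_{n-1}))$ term, so the SES reduces to $H^1\cong H^0(s_j,H^1_{\mathrm{prev}})$. Pairwise differences among the weights $-(\beta_i+\alpha_n)$ are not multiples of $\alpha_j$, so each is a standalone $B_{\alpha_j}$-summand; the direct computation $\langle-(\beta_i+\alpha_n),\alpha_j\rangle=0$ unless $i=j$ (in which case it equals $-1$) together with Lemma~\ref{lemma1.3}(2),(4) shows that exactly the weight $-(\beta_j+\alpha_n)$ is killed at step $s_j$. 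After completing $s_{a_{r-2}}$, the surviving weights are $-(\beta_t+\alpha_n)$ for $a_{r-1}\le t\le a_{r-2}-1$, each of dimension one.

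\textbf{Main obstacle.} The central technical point is to maintain control over the indecomposable-summand structure at each step: verifying that the intermediate $B_{\alpha_j}$-summands of the relevant $H^0$- and $H^1$-modules are either standalone $1$-dimensional weight spaces (to which Lemma~\ref{lemma1.3}(2),(4) applies directly) or explicit $2$-dimensional modules of the form $V\otimes\mathbb{C}_\lambda$ with $V$ the $2$-dimensional irreducible $\tilde{L}_{\alpha_j}$-module. These checks amount to systematically comparing pairwise differences of the explicit weights $-(\beta_i+c\alpha_n+\beta_t)$ appearing in Lemma~\ref{lemma 5.1}(1) with $\mathbb{Z}\alpha_j$, which is routine but must be done carefully for Stage~C of Part~(1) and at every step of Part~(2) to ensure that no unanticipated indecomposable summand appears and that the evaluation of Lemma~\ref{lemma1.3} summand by summand is legitimate.
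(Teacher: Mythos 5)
Your argument is correct and follows essentially the same route as the paper's proof: iterated use of the SES with Lemma~\ref{lemma 2} handling every step except the single $s_n$-step, where the explicit indecomposable $B_{\alpha_n}$-summand structure of $H^0(v_{r-1}v_rs_{n-1},\alpha_{n-1})$ from Lemma~\ref{lemma 5.1}(1) yields $H^1(s_n,\cdot)=\bigoplus_{i=a_{r-1}}^{n-1}\mathbb{C}_{-(\beta_i+\alpha_n)}$, after which the weights are pruned one index at a time exactly as in the paper. The only loose point is your blanket claim that $\langle-(\beta_i+\alpha_n),\alpha_j\rangle=0$ unless $i=j$ (it equals $1$ when $j=i-1$), but that case never arises since at the step for $s_j$ all surviving indices satisfy $i\le j$.
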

\begin{proof}
Proof of $(1)$: Note that $H^0(s_{n-1}, \alpha_{n-1})=\mathbb{C}_{-\alpha_{n-1}} \oplus \mathbb{C}h({\alpha_{n-1}}) \oplus \mathbb{C}_{\alpha_{n-1}}$  (see \cite[Corollary 2.5]{CKP}). \hspace{13cm} (5.1.1)

 Since $\langle \alpha_{n-1}, \alpha_{n-1} \rangle = 2,$ we have $H^1(s_{n-1}, \alpha_{n-1})=0.$ Since $\langle \alpha_{n-1} , \alpha_{n-2} \rangle=-1$, we have
 $H^1(s_{n-2},H^0(s_{n-1}, \alpha_{n-1}))=0.$

Therefore by using SES we have  $H^1(s_{n-2}s_{n-1}, \alpha_{n-1})=0.$    \hspace{4.5cm} (5.1.2)

Since $\langle -\alpha_{n-1} ,\alpha_{n-2} \rangle=1$, by using (5.1.1) we have 

$H^0(s_{n-2}s_{n-1}, \alpha_{n-1})=\mathbb{C}h({\alpha_{n-1}}) \oplus \mathbb{C}_{-\alpha_{n-1}} \oplus \mathbb{C}_{-\beta_{n-2}}.$

Since $\langle -\alpha_{n-1} , \alpha_{n-3} \rangle=0$ and  $\langle -\beta_{n-2} , \alpha_{n-3} \rangle=1$ we have

 $H^1(s_{n-3}, H^0(s_{n-2}s_{n-1}, \alpha_{n-1}))=0.$  \hspace{8.4cm}(5.1.3)

Therefore by using SES together with  (5.1.2), (5.1.3) we have 

$H^1(s_{n-3}s_{n-2}s_{n-1}, \alpha_{n-1})=0.$  \hspace{9.4cm}(5.1.4)

Proceeding in this way we have $H^1(s_{a_{r}}\cdots s_{n-2}s_{n-1}, \alpha_{n-1})=0$       \hspace{3cm}    (5.1.5)
and $H^0(s_{a_{r}}\cdots s_{n-2}s_{n-1}, \alpha_{n-1})=\mathbb{C}h(\alpha_{n-1}) \oplus \bigoplus\limits_{j=a_{r}}^{n-1}\mathbb{C}_{-\beta_{j}}.$ \hspace{5cm}    (5.1.6)

Since $\langle - \beta_{j}, \alpha_{n} \rangle>0$ for all $a_{r}\le j\le n-1,$ by using (5.1.6) we have 

$H^1(s_{n}, H^0(s_{a_{r}}\cdots s_{n-2}s_{n-1}, \alpha_{n-1}))=0.$  \hspace{7.6cm} (5.1.7)

 Therefore by using SES, (5.1.5) and (5.1.7) together  we have

  $H^1(v_{r}s_{n-1}, \alpha_{n-1})=0.$ \hspace{10.7cm}(5.1.8)

 In the proof of Lemma \ref {lemma 5.1}(1) we notice that
 
  $H^0(v_{r}s_{n-1}, \alpha_{n-1})= \mathbb{C}h({\alpha_{n-1}})\oplus (\bigoplus\limits_{j=a_{r}}^{n-1}\mathbb{C}_{\beta_{j}}\oplus \mathbb{C}_{-(\beta_{j} + \alpha_{n})}\oplus \mathbb{C}_{-(\beta_{j} + 2\alpha_{n})}).$      
  
  Thus we have 
  
  $H^1(s_{n-1}, H^0(v_{r}s_{n-1}, \alpha_{n-1}))=0$ (see lines from (4.1.1) to (4.1.2)).\hspace{3cm} (5.1.9)

Proceeding by similar arguments and using (5.1.8), (5.1.9) we have

 $H^1(v_{r-1}v_{r}s_{n-1}, \alpha_{n-1})=0.$
 
Proof of (2) 

From the Lemma \ref {lemma 5.1}(1) we have 

$H^0(v_{r-1}v_{r}s_{n-1}, \alpha_{n-1})=\bigoplus\limits_{i=a_{r}}^{a_{r-1}-1}(\mathbb{C}_{-(\beta_{i} + \alpha_{n})} \oplus \mathbb{C}_{-(\beta_{i} + 2\alpha_{n})}\oplus\mathbb{C}_{-(\beta_{i} + 2\alpha_{n} + \beta_{n-1})} \oplus \cdots \oplus\mathbb{C}_{-(\beta_{i} + 2\alpha_{n} +
	\beta_{a_{r-1}})})$

$\bigoplus\limits_{i=a_{r-1}}^{n-2} 
(\mathbb{C}_{-(\beta_{i} + 2\alpha_{n})}\oplus \mathbb{C}_{-(\beta_{i} + 2\alpha_{n}+\beta_{n-1})}\oplus \cdots \oplus \mathbb{C}_{-(\beta_{i} + 2\alpha_{n} +
\beta_{i+1})})\oplus \mathbb{C}_{-(\alpha_{n-1} + 2\alpha_{n})}
.$

Notice that for each $a_{r}\le i\le a_{r-1}-1$, $\mathbb{C}_{-(\beta_{i} + \alpha_{n})} \oplus \mathbb{C}_{-(\beta_{i} + 2\alpha_{n})}$ forms an indecomposable two dimensional $B_{\alpha_{n}}$-module.

Since $\langle -(\beta_{i} + 2\alpha_{n}), \alpha_{n} \rangle=-2,$
by Lemma \ref{lemma 1.4} we have 

$\mathbb{C}_{-(\beta_{i} + \alpha_{n})} \oplus \mathbb{C}_{-(\beta_{i} + 2\alpha_{n})}=V_{i}\otimes \mathbb{C}_{-\omega_{n}},$ where $V_{i}$ is the irreducible two dimensional $\tilde{L}_{\alpha_{n}}$-module.

By Lemma \ref{lemma1.3}(4) we have 

$H^j(\tilde{L}_{\alpha_{n}}/\tilde{B}_{\alpha_{n}}, \mathbb{C}_{-(\beta_{i} + \alpha_{n})} \oplus \mathbb{C}_{-(\beta_{i} + 2\alpha_{n})})=0$ for all $j\ge 0$ and for all $a_{r}\le i\le a_{r-1}-1.$

Since $\langle -(\beta_{i} + 2\alpha_{n} + \beta_{t}), \alpha_{n}\rangle=0$ for each $a_{r}\le i\le a_{r-1}-1$, and $a_{r-1}\le t\le n-1,$ we have 

$H^1(\tilde{L}_{\alpha_{n}}/\tilde{B}_{\alpha_{n}}, \mathbb{C}_{-(\beta_{i} + 2\alpha_{n} + \beta_{t})})=0$ for all $a_{r}\le i\le a_{r-1}-1$  and $a_{r-1}\le t\le n-1.$

Moreover, we have $\langle -(\beta_{i} + 2\alpha_{n} +
\beta_{i+1}), \alpha_{n}\rangle=0$ for each $a_{r-1}\le i\le n-2$ and $\langle -(\beta_{i} + 2\alpha_{n}), \alpha_{n}\rangle=-2$ for each $a_{r-1}\le i \le n-1$.

Therefore we have

 $H^1(\tilde{L}_{\alpha_{n}}/\tilde{B}_{\alpha_{n}}, \mathbb{C}_{-(\beta_{i} + 2\alpha_{n} + \beta_{i+1})})=0$ for all $a_{r-1}\le i\le n-2.$ 
 
By Lemma \ref {lemma1.3}(3) we have

 $H^1(\tilde{L}_{\alpha_{n}}/\tilde{B}_{\alpha_{n}}, \mathbb{C}_{-(\beta_{i} + 2\alpha_{n})})=H^0(\tilde{L}_{\alpha_{n}}/\tilde{B}_{\alpha_{n}}, s_{n-1}\cdot-(\beta_{i} + 2\alpha_{n}))=\mathbb{C}_{-(\beta_{i} + \alpha_{n})}$ for all $a_{r-1}\le i\le n-1.$

 From the above discussion, we have 

$H^1(s_{n}, H^0(v_{r-1}v_{r}s_{n-1}, \alpha_{n-1}))=\bigoplus\limits_{i=a_{r-1}}^{n-1}\mathbb{C}_{-(\beta_{i} + \alpha_{n})}.$  \hspace{7cm}(5.2.1)

By (1) and using SES we have 

$ H^1(s_{n}, H^0(v_{r-1}v_{r}s_{n-1},  \alpha_{n-1}))= H^1(s_{n}v_{r-1}v_{r}s_{n-1}, \alpha_{n-1}).$ \hspace{5.4cm}  (5.2.2)

From (5.2.1) and (5.2.2) we have 

$$H^1(s_{n}v_{r-1}v_{r}s_{n-1}, \alpha_{n-1})=\bigoplus\limits_{i=a_{r-1}}^{n-1}\mathbb{C}_{-(\beta_{i} + \alpha_{n})}. \hspace{8.5cm} (5.2.3)$$      

By Lemma \ref {lemma 2} we have

 $H^1(s_{n-1}, H^0(s_{n}v_{r-1}v_{r}s_{n-1}, \alpha_{n-1}))=0.$ \hspace{8.6cm}(5.2.4)

Let $v=v_{r-1}v_{r}s_{n-1}.$
Therefore by using SES and (5.2.4) we have 

$H^1(s_{n-1}s_{n}v, \alpha_{n-1})= H^0(s_{n-1}, H^1(s_{n}v, \alpha_{n-1})). $

Since $\langle -(\beta_{n-1} + \alpha_{n}), \alpha_{n-1} \rangle=-1 $ and $\langle -(\beta_{i} + \alpha_{n}), \alpha_{n-1} \rangle=0 $ for all $a_{r-1}\le i\le n-2,$ by using (5.2.3) we have

 $H^1(s_{n-1}s_{n}v, \alpha_{n-1})=\bigoplus\limits_{i=a_{r-1}}^{n-2}\mathbb{C}_{-(\beta_{i} + \alpha_{n})}.$

Proceeding recursively we have

$H^1(v_{r-2}v_{r-1}v_{r}s_{n-1} , \alpha_{n-1})=H^1(s_{a_{r-2}}\cdots s_{n-1}s_{n}v, \alpha_{n-1})=\bigoplus\limits_{i=a_{r-1}}^{a_{r-2}-1}\mathbb{C}_{-(\beta_{i} + \alpha_{n})}.$

\end{proof}

Recall that $w_{r}=[a_{1}, n][a_{2}, n]\cdots [a_{r-1},n][a_{r}, n-1],$ where $1 \le r \le k$ and $n\ge a_{1}>a_{2}>\ldots >a_{k-1}>a_{k}=1.$ 
\begin{lem}\label{lemma 6.2}
	
\begin{enumerate}
\item[(1)] $H^1(w_{1}, \alpha_{n-1})=0.$
\item[(2)] If $a_{2}\neq n-1,$ then $H^1(w_{2}, \alpha_{n-1})=0.$
\item[(3)] Let $3\le r \le k.$ Then, $H^1(w_{r}, \alpha_{n-1})_{\mu}\neq 0$ if and only if $\mu=$-$(\beta_{j} + \alpha_{n})$ for some $j$ such that $a_{r-1}\le j\le a_{r-2}-1.$ In such case dim$(H^1(w_{r}, \alpha_{n-1}))_{\mu}=1.$
\end{enumerate}
\end{lem}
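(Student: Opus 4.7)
The plan is to handle the three parts separately via the $SES$ recursion, combining Lemmas \ref{lemma 2}, \ref{lemma 5.2}, and \ref{lemma 6.1}.

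For part $(1)$, the word $w_1=s_{a_1}\cdots s_{n-1}$ consists entirely of simple reflections $s_\ell$ with $\ell<n$. Starting from $H^1(s_{n-1},\alpha_{n-1})=0$ (which follows from $\langle \alpha_{n-1},\alpha_{n-1}\rangle = 2$) and extending by $s_{n-2},s_{n-3},\ldots,s_{a_1}$ on the left, at each step Lemma \ref{lemma 2} gives $H^1(s_\ell,H^0(w',\alpha_{n-1}))=0$ for $\ell<n$; combined with the inductive vanishing of $H^1(w',\alpha_{n-1})$, the $SES$ yields $H^1(w_1,\alpha_{n-1})=0$. This is precisely the argument of Lemma \ref{lemma 6.1}(1) up to equation $(5.1.5)$ with $a_r$ replaced by $a_1$.

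For part $(2)$, writing $w_2=v_1 v_2 s_{n-1}$ with $v_1=s_{a_1}\cdots s_{n-1}$ and $v_2=s_n s_{a_2}\cdots s_{n-2}$, the hypothesis $a_2\ne n-1$ (i.e.\ $a_2\le n-2$) makes $v_2$ a well-defined non-trivial word. The proof of Lemma \ref{lemma 6.1}(1) never invokes the factor $v_{r-2}$ and applies verbatim with $r=2$, giving $H^1(w_2,\alpha_{n-1})=0$.

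For part $(3)$, set $Q=[a_1,n][a_2,n]\cdots[a_{r-3},n]$ so that $w_r=Q\cdot v_{r-2}v_{r-1}v_r s_{n-1}$. Lemma \ref{lemma 6.1}(2) supplies
$$H^1(v_{r-2}v_{r-1}v_r s_{n-1},\alpha_{n-1})=\bigoplus_{t=a_{r-1}}^{a_{r-2}-1}\mathbb{C}_{-(\beta_t+\alpha_n)},$$
with each weight space of dimension $1$. The plan is to prepend the letters of $Q$ one simple reflection $s_\ell$ at a time, applying the $SES$
$$0\to H^1(s_\ell,H^0(w',\alpha_{n-1}))\to H^1(s_\ell w',\alpha_{n-1})\to H^0(s_\ell,H^1(w',\alpha_{n-1}))\to 0$$
and verifying at each step that \emph{(a)} the left term vanishes and \emph{(b)} the right term equals $H^1(w',\alpha_{n-1})$. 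For $\ell\ne n$, \emph{(a)} is Lemma \ref{lemma 2}; for $\ell=n$ I exploit the explicit description in Lemma \ref{lemma 5.2}, since the intermediate $H^0$ is a direct sum of one-dimensional weight spaces with weights of the form $-(\beta_i+2\alpha_n+\beta_t)$, each orthogonal to $\alpha_n$ because $\langle\beta_i+2\alpha_n+\beta_t,\alpha_n\rangle=-2+4-2=0$, and Lemma \ref{lemma1.3}(2) then gives the vanishing. For \emph{(b)}, every $\ell$ appearing in $Q$ satisfies $\ell\ge a_{r-3}>a_{r-2}>t$, hence $\ell\ge t+2$, so a direct root-system computation in $B_n$ yields $\langle-(\beta_t+\alpha_n),\alpha_\ell\rangle=0$; by Lemma \ref{lemma1.3}(2) each summand is preserved by $H^0(s_\ell,-)$. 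Iterating through all letters of $Q$ completes the proof and preserves the one-dimensionality of each weight space.

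The main technical obstacle is the $\ell=n$ case of \emph{(a)}: Lemma \ref{lemma 2} is unavailable there, and one must instead use the stabilised closed form of $H^0$ provided by Lemma \ref{lemma 5.2} to establish the required orthogonality to $\alpha_n$.
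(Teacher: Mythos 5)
Your proposal is correct and follows essentially the same route as the paper: part (1) is the Lemma \ref{lemma 2} + SES induction, part (2) is the same computation the paper performs directly (your observation that the proof of Lemma \ref{lemma 6.1}(1) runs verbatim for $r=2$ once $a_2\le n-2$ is exactly why the paper's direct argument works), and part (3) prepends the prefix $[a_1,n]\cdots[a_{r-3},n]$ to the base case of Lemma \ref{lemma 6.1}(2) via SES, using Lemma \ref{lemma 5.2} to kill the $H^1(s_n,H^0(\cdot))$ terms and the orthogonality $\langle -(\beta_t+\alpha_n),\alpha_\ell\rangle=0$ to preserve the $H^0(s_\ell,\cdot)$ terms, just as the paper does. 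Your explicit identification of the $\ell=n$ steps as the only place where Lemma \ref{lemma 2} fails, and your appeal to the stabilised form of $H^0$ from Lemma \ref{lemma 5.2} there, is precisely the paper's mechanism.
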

\begin{proof}

Proof of (1):  Follows from proof of Lemma \ref {lemma 2} and using SES.
	
Proof of (2): By proof of (1), we have 

$H^1(s_{a_{2}}\cdots s_{n-1}, \alpha_{n-1})=0.$ \hspace{10.3cm} (5.2.1)
Since $a_{2}\neq n-1,$ $H^0(s_{a_{2}}\cdots s_{n-1}, \alpha_{n-1})=\mathbb{C}h(\alpha_{n-1})\oplus (\bigoplus\limits_{j=a_{2}}^{n-1}\mathbb{C}_{-(\beta_{j})}).$

Since $\langle -\beta_{j}, \alpha_{n}\rangle \ge 1$ for all $a_{2}\le j\le n-1,$ we have 

$H^1(s_{n}, H^0(s_{a_{2}}\cdots s_{n-1}, \alpha_{n-1}))=0.$ \hspace{9cm}(5.2.2)

By SES, (5.2.1), (5.2.2) we have $H^1(s_{n}s_{a_{2}}\cdots s_{n-1}, \alpha_{n-1})=0.$

By using Lemma \ref {lemma 2} and SES respeatedly, we see that 

$H^1(w_{2}, \alpha_{n-1})=H^1(s_{a_{1}}\cdots s_{n}s_{a_{2}}\cdots s_{n-1}, \alpha_{n-1})=0.$

Proof of (3): 

Let $v_{r}=s_{n}s_{a_{r}}\cdots s_{n-2},$ $v_{r-1}=s_{a_{r-1}}\cdots s_{n-1}$ and $v_{r-2}=s_{a_{r-2}}\cdots s_{n-1}s_{n}.$ Then by Lemma \ref {lemma 6.1}(2) we have

$$H^1(v_{r-2}v_{r-1}v_{r}s_{n-1}, \alpha_{n-1})=\bigoplus\limits_{j=a_{r-1}}^{a_{r-2}-1}\mathbb{C}_{-(\beta_{j} + \alpha_{n})}.$$  

 By Lemma \ref {lemma 5.2}(1) if $(H^0(v_{r-2}v_{r-1}v_{r}s_{n-1}, \alpha_{n-1}))_{\mu}\neq0$ then $\langle\mu, \alpha_{n} \rangle\ge0.$
 
 Therefore we have $H^1(s_{n}, H^0(v_{r-2}v_{r-1}v_{r}s_{n-1}, \alpha_{n-1}) )=0.$    \hspace{5cm}(5.3.1)

By using SES and (5.3.1) we have 

$H^1(s_{n}v_{r-2}v_{r-1}v_{r}s_{n-1} , \alpha_{n-1})= H^0(s_{n}, H^1(v_{r-2}v_{r-1}v_{r}, \alpha_{n-1})).$       \hspace{4.5cm}     (5.3.2)

Since $\langle -(\beta_{j} + \alpha_{n}), \alpha_{n}\rangle=0$ for all $a_{r-1}\le j\le a_{r-2}-1$, we have from (5.3.2)

$H^1(s_{n}v_{r-2}v_{r-1}v_{r}s_{n-1} , \alpha_{n-1})=\bigoplus\limits_{j=a_{r-1}}^{a_{r-2}-1}\mathbb{C}_{-(\beta_{j} + \alpha_{n})}.$

By Lemma \ref {lemma 2}, we have $H^1(s_{n-1}, H^0(s_{n}v_{r-2}v_{r-1}v_{r}s_{n-1}, \alpha_{n-1}) )=0.$\hspace{3.5cm}  (5.3.4)

Therefore by using SES together with (5.3.4) we have

$H^1(s_{n-1}s_{n}v_{r-2}v_{r-1}v_{r} , \alpha_{n-1})= H^0(s_{n-1}, H^1(s_{n}v_{r-2}v_{r-1}v_{r}s_{n-1}, \alpha_{n-1})).$ \hspace{3.0cm}(5.3.5)

Since $\langle -(\beta_{j} + \alpha_{n}), \alpha_{n-1}\rangle=0$ for all $a_{r-1}\le j\le a_{r-2}-1,$ using (5.3.5) we have 

$H^1(s_{n-1}s_{n}v_{r-2}v_{r-1}v_{r}s_{n-1}, \alpha_{n-1})=\bigoplus\limits_{j=a_{r-1}}^{a_{r-2}-1}\mathbb{C}_{-(\beta_{j} + \alpha_{n})}.$

Proceeding recursively  we have

$$H^1(s_{a_{r-3}}\cdots s_{n-1}s_{n}v_{r-2}v_{r-1}v_{r}s_{n-1} , \alpha_{n-1})=\bigoplus\limits_{j=a_{r-1}}^{a_{r-2}-1}\mathbb{C}_{-(\beta_{j} + \alpha_{n})}.$$

Since $\langle -(\beta_{j} + \alpha_{n}), \alpha_{t}\rangle$ for all $a_{r-1}\le j\le a_{r-2}-1$ and $a_{r-4}\le t\le n,$ using similar arguments as above we have  

$$H^1(w_{r}, \alpha_{n-1})=\bigoplus\limits_{j=a_{r-1}}^{a_{r-2}-1}\mathbb{C}_{-(\beta_{j} + \alpha_{n})}.$$

\end{proof}
\begin{cor}\label{cor 6.3}
Let $3\le r\le k.$ If  $H^1(w_{r}, \alpha_{n-1})_{\mu}\neq 0,$ then

 $H^0(w_{r-2}s_{n}s_{a_{r-1}}\cdots s_{n + 2-r}, \alpha_{n+2-r})_{\mu}\neq0.$
\end{cor}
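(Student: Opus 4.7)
The corollary is essentially a direct juxtaposition of two previously established results, so my plan is to combine them with minimal additional work.

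First, I would invoke Lemma \ref{lemma 6.2}(3), which completely characterizes the nonzero weight spaces of $H^1(w_r, \alpha_{n-1})$. Specifically, the hypothesis $H^1(w_r, \alpha_{n-1})_\mu \neq 0$ together with $3 \le r \le k$ forces $\mu$ to be of the form $\mu = -(\beta_j + \alpha_n)$ for some integer $j$ satisfying $a_{r-1} \le j \le a_{r-2}-1$. This step uses no new computation; it just pins down the only possible shape of $\mu$.

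Second, I would feed this information into Lemma \ref{lemma 5.2.1}. That lemma asserts precisely that for any $\mu$ of the form $-(\beta_j + \alpha_n)$ with $a_{r-1} \le j \le a_{r-2}-1$, the weight space
\[
H^0\bigl(w_{r-2}\, s_n\, s_{a_{r-1}} s_{a_{r-1}+1} \cdots s_{n+2-r},\, \alpha_{n+2-r}\bigr)_\mu
\]
is nonzero. Combining the two steps yields the desired conclusion.

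The corollary therefore requires no genuinely new argument, and there is no real obstacle: the work has already been done in establishing Lemma \ref{lemma 6.2}(3), which restricts the shape of admissible weights of $H^1(w_r,\alpha_{n-1})$, and Lemma \ref{lemma 5.2.1}, which produces these same weights in the relevant $H^0$. The only care one needs to take is to make sure the indexing ranges match: both lemmas use the same range $a_{r-1} \le j \le a_{r-2}-1$ under the assumption $3 \le r \le k$, so the compatibility is automatic. A one-line proof suffices.
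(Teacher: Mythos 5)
Your proposal is correct and is exactly the paper's argument: the paper's proof reads ``Corollary follows from Lemma \ref{lemma 5.2.1} and Lemma \ref{lemma 6.2}(2),'' where the citation of part (2) is evidently a typo for part (3), the part you correctly invoke. The weight ranges $a_{r-1}\le j\le a_{r-2}-1$ in the two lemmas do match as you note, so the one-line combination suffices.
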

\begin{proof}
Corollary follows from Lemma \ref {lemma 5.2.1} and Lemma \ref{lemma 6.2}(2). 
\end{proof}
\begin{cor}\label{cor 6.4}
Let $3\le r \le k.$ If $H^1(w_{r}, \alpha_{n-1})_{\mu}\neq 0,$ then $H^0(w_{r-1}s_{n}, \alpha_{n})_{\mu}\neq0.$
\end{cor}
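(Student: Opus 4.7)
The plan is essentially to read off the corollary by comparing two characterizations of weight spaces that are already established in the excerpt. Specifically, Lemma~\ref{lemma 6.2}(3) gives the complete list of weights $\mu$ for which $H^1(w_r, \alpha_{n-1})_\mu \neq 0$: these are precisely the weights of the form $\mu = -(\beta_j + \alpha_n)$ with $a_{r-1} \le j \le a_{r-2}-1$. Meanwhile, Lemma~\ref{lemma 5.3} gives the complete list of weights for which $H^0(w_{r-1} s_n, \alpha_n)_\mu \neq 0$, and this is exactly the same set of weights $-(\beta_j + \alpha_n)$ with $a_{r-1} \le j \le a_{r-2}-1$.

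Thus the proof is a one-line deduction: assume $H^1(w_r, \alpha_{n-1})_\mu \neq 0$; invoke Lemma~\ref{lemma 6.2}(3) to write $\mu = -(\beta_j + \alpha_n)$ for some $j$ in the range $a_{r-1} \le j \le a_{r-2}-1$; then apply Lemma~\ref{lemma 5.3} to conclude $H^0(w_{r-1} s_n, \alpha_n)_\mu \neq 0$. There is no real obstacle here, since both characterizations are already in hand and the two index sets match exactly; the content of the corollary is just the observation that the nonvanishing weights of these two cohomology modules coincide, which is why the corollary appears as a consequence rather than an independent computation.

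If one wanted to emphasize why this bookkeeping is the right thing to verify (rather than why it is true), it is worth noting that the corollary is the companion to Corollary~\ref{cor 6.3}: together they will let later sections lift any nontrivial element of $H^1(w_r, \alpha_{n-1})$ to sections of related line bundles associated to the Coxeter element, which is the mechanism by which the main theorem's hypothesis $a_2 \neq n-1$ eventually gets converted into a vanishing statement for the tangent bundle cohomology. In short, the only thing to do is align the index ranges in Lemma~\ref{lemma 6.2}(3) and Lemma~\ref{lemma 5.3}, both of which run over $a_{r-1} \le j \le a_{r-2}-1$, and then cite both lemmas.
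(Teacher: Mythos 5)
Your proof is correct and is essentially identical to the paper's, which simply cites Lemma \ref{lemma 5.3} together with Lemma \ref{lemma 6.2} (the paper writes part (2) there, but it clearly means part (3), exactly the statement you invoke). Nothing further is needed.
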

\begin{proof}
Corollary follows from Lemma \ref{lemma 5.3} and Lemma
\ref{lemma 6.2}(2).	
\end{proof}

\begin{lem}\label{lemma 6.5} Let $u_{1}=w_{k}s_{n}[a_{k}, n-1].$ Then 
	$H^1(u_{1}, \alpha_{n-1})_{\mu}\neq 0$ if and only if $\mu$ is of the form, $\mu=-(\beta_{j} + \alpha_{n})$ for some $1\le j \le a_{k-1}-1.$	
\end{lem}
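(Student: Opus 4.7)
The plan is to compute $H^1(u_1,\alpha_{n-1})$ by following the two-stage SES argument already used for $H^1(w_r,\alpha_{n-1})$ in Lemmas \ref{lemma 6.1} and \ref{lemma 6.2}(3), adapted to the boundary situation where the last two ``indices'' coincide at $1$. Observe first that $u_1=w_k s_n [1,n-1]=[a_1,n]\cdots[a_{k-1},n][1,n][1,n-1]$, which is exactly the word $u_1$ appearing in Lemma \ref{lemma 5.2}(3); in particular the companion module $H^0(u_1,\alpha_{n-1})$ is already described there. I would factor $u_1 = [a_1,n]\cdots[a_{k-2},n]\cdot Y$, where $Y=[a_{k-1},n][1,n][1,n-1]$ is the ``core'' suffix, and treat the core and the outer factors separately.

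For the core, I would mimic the proof of Lemma \ref{lemma 6.1}. First, set $v'=[1,n][1,n-1]$ and show $H^1(v',\alpha_{n-1})=0$ by the same step-by-step SES calculation as in Lemma \ref{lemma 6.1}(1), simply specializing the parameter $a_r$ to $1$; the weight-pairing checks (using $H^0(s_{n-1},\alpha_{n-1})=\mathbb{C}_{-\alpha_{n-1}}\oplus\mathbb{C}h(\alpha_{n-1})\oplus\mathbb{C}_{\alpha_{n-1}}$ and repeated use of Lemma \ref{lemma1.3}) are unaffected by this specialization. Then multiply on the left by $v^{***}=s_{a_{k-1}}\cdots s_{n-1}s_n$ one simple reflection at a time, feeding Lemma \ref{lemma 5.1}(2) into the SES for the $H^0$-terms. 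The $\mathbb{C}_{-(\beta_i+2\alpha_n)}$ summands of $H^0(v',\alpha_{n-1})$ pair as $-2$ with $\alpha_n$, so by Lemma \ref{lemma1.3}(3) they produce $\mathbb{C}_{-(\beta_i+\alpha_n)}$ in $H^1$ when $s_n$ is applied, while the remaining summands either vanish (via Lemma \ref{lemma 1.4} as indecomposable $\tilde L_{\alpha_n}$-summands of the form $V\otimes\mathbb{C}_{-\omega_n}$) or are orthogonal to the pertinent simple root. Tracking these through the remaining reflections $s_{n-1},s_{n-2},\ldots,s_{a_{k-1}}$ (as in the proof of Lemma \ref{lemma 6.2}(3)) should yield $H^1(Y,\alpha_{n-1})=\bigoplus_{j=1}^{a_{k-1}-1}\mathbb{C}_{-(\beta_j+\alpha_n)}$.

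For the outer extension, I would apply the SES for each simple reflection in $[a_1,n]\cdots[a_{k-2},n]$, reading from left to right. The key arithmetic is that for any such index $t$ we have $t\ge a_{k-2}>a_{k-1}-1\ge j$, so $t\notin\{j-1,j\}$ and $t\le n$; hence $\langle-(\beta_j+\alpha_n),\alpha_t\rangle=0$ for every $j$ in the claimed range, and Lemma \ref{lemma1.3}(2) gives $H^0(s_t,\mathbb{C}_{-(\beta_j+\alpha_n)})=\mathbb{C}_{-(\beta_j+\alpha_n)}$. For the other half of the SES, the factor $H^1(s_t,H^0(\cdot,\alpha_{n-1}))$ vanishes: when $t\neq n$ this is Lemma \ref{lemma 2.1} combined with Lemma \ref{lemma 2}, and when $t=n$ it follows because, by Lemma \ref{lemma 5.2}(3), every weight appearing in the relevant $H^0(\cdot,\alpha_{n-1})$ pairs non-negatively with $\alpha_n$. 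Thus each extension step preserves $H^1$, and the claimed description of $H^1(u_1,\alpha_{n-1})$ follows.

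The main obstacle is the weight bookkeeping in the core step, where one must verify carefully that across the chain of SES applications the support of $H^1$ is exactly $\{-(\beta_j+\alpha_n):1\le j\le a_{k-1}-1\}$ with multiplicity one, no spurious contributions arise from the intermediate indecomposable $B_{\alpha_n}$-summands, and the boundary condition $a_k=1$ (which forbids a direct appeal to Lemma \ref{lemma 6.1} with $r=k+1$) is handled by routing through Lemma \ref{lemma 5.1}(2) in place of Lemma \ref{lemma 5.1}(1). Once the core computation is in hand, the outer extension is formal.
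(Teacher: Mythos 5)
Your proposal is correct and follows essentially the same route as the paper: the paper also computes the core $[a_{k-1},n][1,n][1,n-1]$ by first showing $H^1([1,n][1,n-1],\alpha_{n-1})=0$, extracting $\bigoplus_{i}\mathbb{C}_{-(\beta_i+\alpha_n)}$ from the standalone $\mathbb{C}_{-(\beta_i+2\alpha_n)}$ summands of Lemma \ref{lemma 5.1}(2) when $s_n$ is applied, trimming down to $1\le j\le a_{k-1}-1$ through $s_{n-1},\dots,s_{a_{k-1}}$, and then propagating unchanged through $[a_1,n]\cdots[a_{k-2},n]$ using Lemma \ref{lemma 5.2} for the $t=n$ steps and Lemma \ref{lemma 2} for $t\neq n$. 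The only nitpick is that Lemma \ref{lemma 2.1} does not apply here (it excludes $\alpha_{n-1}$); Lemma \ref{lemma 2} alone is what you need for the $t\neq n$ vanishing, and you do cite it.
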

\begin{proof}

Let $v_{k+1}=s_{n}s_{1}\cdots s_{n-2},$ $v_{k}=s_{1}\cdots s_{n-1}$ and $v_{k-1}=s_{a_{k-1}}\cdots s_{n-1}s_{n}.$ 

{\bf Step 1:} 

$$H^1(v_{k-1}v_{k}v_{k+1}s_{n-1}, \alpha_{n-1})=\bigoplus\limits_{j=1}^{a_{k-1}-1}\mathbb{C}_{-(\beta_{j} + \alpha_{n})}.$$  
Proof of Step 1: By Lemma \ref {lemma 5.1}(2), we have

$H^0(s_{1}s_{2}\cdots s_{n}s_{1} \cdots s_{n-1}, \alpha_{n-1})=\bigoplus\limits_{i=1}^{n-2}(\mathbb{C}_{-(\beta_{i} + 2\alpha_{n})}\oplus \mathbb{C}_{-(\beta_{i} + 2\alpha_{n} + \beta_{n-1})}\oplus \dots \oplus \mathbb{C}_{-(\beta_{i} + 2\alpha_{n} + \beta_{i+1})})$

$\oplus  \mathbb{C}_{-(\beta_{n-1} + 2\alpha_{n})}.$ 

By Lemma \ref{lemma 6.2}(1), $H^1(s_{2}\cdots s_{n}s_{1}\cdots s_{n-1}, \alpha_{n-1})=0.$ Therefore by SES, we have

$H^1(v_{k}v_{k+1}s_{n-1}, \alpha_{n-1})=H^1(s_{1}, H^0(s_{2}\cdots s_{n}s_{1}\cdots s_{n-1}, \alpha_{n-1}))=0$ (by Lemma \ref{lemma 2}). \hspace {.8cm} (5.5.0)
By SES and (5.5.0) we have 

$H^1(s_{n}v_{k}v_{k+1}s_{n-1}, \alpha_{n-1})=H^1(s_{n}, H^0(v_{k}v_{k+1}s_{n-1}, \alpha_{n-1}))=\bigoplus\limits_{i=1}^{n-1}\mathbb{C}_{-(\beta_{i}+\alpha_{n})}.$

By Lemma \ref{lemma 2}, we have $H^1(s_{n-1}, H^0(s_{n}v_{k}v_{k+1}s_{n-1}, \alpha_{n-1}))=0.$ Therefore by SES, we have $H^1(s_{n-1}s_{n}v_{k}v_{k+1}s_{n-1}, \alpha_{n-1})=H^0(s_{n-1}, H^1(s_{n}v_{k}v_{k+1}s_{n-1}, \alpha_{n-1})).$

 Since $\langle -(\beta_{n-1}+ \alpha_{n}), \alpha_{n-1} \rangle=-1$ and $\langle -(\beta_{i} + \alpha_{n}), \alpha_{n-1}\rangle =0$ for all $1\le i\le n-2, $ we have $H^1(s_{n-1}s_{n}v_{k}v_{k+1}s_{n-1}, \alpha_{n-1})=\bigoplus\limits_{i=1}^{n-2}\mathbb{C}_{-(\beta_{i} + \alpha_{n})}.$ 
Proceeding in this way
recurssively, we see that $H^1(v_{k-1}v_{k}v_{k+1}s_{n-1}, \alpha_{n-1})=\bigoplus\limits_{i=1}^{a_{k-1}-1}\mathbb{C}_{-(\beta_{i} + \alpha_{n})}.$ Hence Step 1 follows. 

 By Lemma \ref {lemma 5.2}, we have $H^1(s_{n}, H^0(v_{k-1}v_{k}v_{k+1}s_{n-1}, \alpha_{n-1}) )=0.$  \hspace{4.5cm}  (5.5.1)

Therefore by using SES and (5.5.1) we have

$H^1(s_{n}v_{k-1}v_{k}v_{k+1}s_{n-1} , \alpha_{n-1})=H^0(s_{n}, H^1(v_{k-1}v_{k}v_{k+1}s_{n-1}, \alpha_{n-1})).$      \hspace{4cm}    (5.5.2) 

Since $\langle -(\beta_{j} + \alpha_{n}), \alpha_{n}\rangle=0$ for all $1\le j\le a_{k-1}-1,$ by using (5.5.2) we have

$H^1(s_{n}v_{k-1}v_{k}v_{k+1}s_{n-1} , \alpha_{n-1})=\bigoplus\limits_{j=1}^{a_{k-1}-1}\mathbb{C}_{-(\beta_{j} + \alpha_{n})}.$

By Lemma \ref {lemma 2}, we have $H^1(s_{n-1}, H^0(s_{n}v_{k-1}v_{k}v_{k+1}s_{n-1} , \alpha_{n-1}) )=0.$  \hspace{3.9cm}(5.2.3)

Therefore by using SES and (5.2.3) we have

$H^1(s_{n-1}s_{n}v_{k-1}v_{k}v_{k+1}s_{n-1} , \alpha_{n-1})= H^0(s_{n-1}, H^1(s_{n}v_{k-1}v_{k}v_{k+1}s_{n-1}, \alpha_{n-1})).$ \hspace{2.6cm}(5.2.4)

Since $\langle -(\beta_{j} + \alpha_{n}), \alpha_{n-1}\rangle=0$ for all $1\le j\le a_{k-1}-1,$ by using (6.2.4) we have 

$H^1(s_{n-1}s_{n}v_{k-1}v_{k}v_{k+1}s_{n-1}, \alpha_{n-1})=\bigoplus\limits_{j=1}^{a_{k-1}-1}\mathbb{C}_{-(\beta_{j} + \alpha_{n})}.$

Proceeding recursively we have

$$H^1(s_{a_{k-2}}\cdots s_{n-1}s_{n}v_{k-1}v_{k}v_{k+1}s_{n-1} , \alpha_{n-1})=\bigoplus\limits_{j=1}^{a_{k-1}-1}\mathbb{C}_{-(\beta_{j} + \alpha_{n})}.$$

Using the similar arguments as above we have  

$$H^1(u_{1}, \alpha_{n-1})=\bigoplus\limits_{j=1}^{a_{k-1}-1}\mathbb{C}_{-(\beta_{j} + \alpha_{n})}.$$

\end{proof}
\begin{cor}\label{cor 6.6}
If  $H^1(u_{1}, \alpha_{n-1})_{\mu}\neq 0,$ then $H^0(w_{k-1}s_{n}s_{1}s_{2}\cdots s_{n + 1-k},  \alpha_{n+1-k})_{\mu}\neq0.$
\end{cor}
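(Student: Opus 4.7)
The plan is to deduce this corollary directly by matching the weight descriptions produced by the two immediately preceding computational lemmas. More precisely, I would first invoke Lemma \ref{lemma 6.5}, which gives a complete list of weights $\mu$ for which $H^1(u_1,\alpha_{n-1})_\mu\neq 0$: namely, $\mu$ must be of the form $\mu=-(\beta_j+\alpha_n)$ for some $1\le j\le a_{k-1}-1$. This reduces the statement to checking the nonvanishing of $H^0(w_{k-1}s_n s_1 s_2\cdots s_{n+1-k},\alpha_{n+1-k})_\mu$ only for these specific $\mu$.

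Then I would simply quote Lemma \ref{lemma 5.4}, which asserts exactly that for any weight of the form $\mu=-(\beta_j+\alpha_n)$ with $1\le j\le a_{k-1}-1$, the cohomology module $H^0(w_{k-1}s_n s_1 s_2\cdots s_{n+1-k},\alpha_{n+1-k})_\mu$ is nonzero. Combining these two observations yields the desired implication.

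Thus the proof is a one-line concatenation of Lemmas \ref{lemma 6.5} and \ref{lemma 5.4}, exactly parallel to the argument used for Corollaries \ref{cor 6.3} and \ref{cor 6.4}, where Lemma \ref{lemma 5.2.1} (resp.\ Lemma \ref{lemma 5.3}) was combined with Lemma \ref{lemma 6.2}(2). There is no real obstacle here, since the heavy lifting has already been done in the inductive computations of Sections 4 and 5; the corollary is really a bookkeeping statement asserting that the set of nonzero $H^1$-weights of $u_1$ coincides with (in fact is contained in) the set of nonzero $H^0$-weights of $w_{k-1}s_n s_1 s_2\cdots s_{n+1-k}$ for the sheaf $\alpha_{n+1-k}$. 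The only mild point to keep in mind is that Lemma \ref{lemma 5.4} is phrased as a sufficient condition (``if $\mu$ is of this form, then the weight space is nonzero''), which is precisely what is needed here since Lemma \ref{lemma 6.5} has already constrained $\mu$ to lie in that form.
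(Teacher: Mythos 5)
Your proposal matches the paper's own proof exactly: the paper derives Corollary \ref{cor 6.6} by combining Lemma \ref{lemma 6.5} (which characterizes the nonzero weights of $H^1(u_1,\alpha_{n-1})$ as precisely those of the form $-(\beta_j+\alpha_n)$ with $1\le j\le a_{k-1}-1$) with Lemma \ref{lemma 5.4} (which shows the corresponding $H^0$-weight spaces are nonzero). The argument is correct and identical in structure to the paper's.
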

\begin{proof}
	Corollary follows from Lemma \ref {lemma 5.4} and Lemma \ref{lemma 6.5}. 
\end{proof}
\begin{cor}\label{cor 6.7}
 If $H^1(u_{1}, \alpha_{n-1})_{\mu}\neq 0,$ then $H^0(w_{k}s_{n}, \alpha_{n})_{\mu}\neq0.$
\end{cor}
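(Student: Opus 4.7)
The statement is parallel to Corollary 6.4 (which combined Lemma 5.3 with Lemma 6.2(3)) and to Corollary 6.6 (which combined Lemma 5.4 with Lemma 6.5). So the plan is simply to compare the two explicit weight characterizations already established in the paper.

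First I would invoke Lemma \ref{lemma 6.5}, which gives a complete description of the nonzero weight spaces of $H^1(u_1, \alpha_{n-1})$: namely, $H^1(u_1, \alpha_{n-1})_\mu \neq 0$ if and only if $\mu$ has the form $\mu = -(\beta_j + \alpha_n)$ for some $1 \le j \le a_{k-1}-1$. So the hypothesis of the corollary forces $\mu$ to be of this specific shape.

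Second, I would apply Lemma \ref{lemma 5.5}, which asserts that $H^0(w_k s_n, \alpha_n)_\mu \neq 0$ if and only if $\mu = -(\beta_j + \alpha_n)$ for some $1 \le j \le a_{k-1}-1$. Since the weight $\mu$ produced in the first step lies precisely in the indexing set on the right-hand side here, we conclude $H^0(w_k s_n, \alpha_n)_\mu \neq 0$, which is the desired conclusion.

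There is no real obstacle: the two lemmas have been crafted so that their weight sets match on the nose, and the corollary is just the logical implication obtained by composing the two ``iff'' statements. The only thing to double-check is bookkeeping, namely that the range $1 \le j \le a_{k-1}-1$ appearing in Lemma \ref{lemma 6.5} is identical to that appearing in Lemma \ref{lemma 5.5} (it is), so no additional case analysis on $a_{k-1}$ or on the shape of the Coxeter element is needed.
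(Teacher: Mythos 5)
Your proposal is correct and matches the paper's own argument exactly: the paper proves Corollary \ref{cor 6.7} by combining Lemma \ref{lemma 5.5} with Lemma \ref{lemma 6.5}, precisely the two weight characterizations you compose, and the index ranges $1\le j\le a_{k-1}-1$ do coincide. Nothing further is needed.
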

\begin{proof}
	Corollary follows from Lemma \ref{lemma 5.5} and Lemma \ref{lemma 6.5}.	
\end{proof}
Let $3\le r \le k.$ Let $M_{r}:=\{\mu \in X(T): H^1(w_{r}, \alpha_{n-1})_{\mu}\neq 0 \}$  and $M_{0}:=\{\mu\in X(T): H^1(u_{1}, \alpha_{n-1})_{\mu}\neq 0 \}.$
Then we have

\begin{cor}\label{cor 6.8}
	\item [(1)] $M_{r}\cap M_{r'}=\emptyset$ whenever $r\neq r'.$
	\item[(2)] $M_{0}\cap M_{r}=\emptyset$ for every $1\le r \le k.$
\end{cor}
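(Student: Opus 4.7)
The plan is to prove Corollary 6.8 by directly comparing the explicit weight sets that Lemma \ref{lemma 6.2}(3) and Lemma \ref{lemma 6.5} give us, and then appealing to the strict decrease $n\geq a_1>a_2>\cdots>a_k=1$ to conclude that the relevant index intervals are disjoint.

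First, by Lemma \ref{lemma 6.2}(3), for each $3\le r \le k$ we have the explicit description
\[
M_r \;=\; \bigl\{-(\beta_j+\alpha_n)\,:\, a_{r-1}\le j \le a_{r-2}-1\bigr\}.
\]
Similarly, by Lemma \ref{lemma 6.5},
\[
M_0 \;=\; \bigl\{-(\beta_j+\alpha_n)\,:\, 1\le j \le a_{k-1}-1\bigr\}.
\]
Since all these weights are of the form $-(\beta_j+\alpha_n)$ and the map $j\mapsto -(\beta_j+\alpha_n)$ is injective on $\{1,\dots,n-1\}$, to check $M_r\cap M_{r'}=\emptyset$ (respectively $M_0\cap M_r=\emptyset$) it suffices to check that the corresponding index intervals are disjoint.

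For (1), assume without loss of generality that $3\le r<r'\le k$. Then $r-1<r'-1\le r'-2$, so by the strict decrease of the sequence $(a_i)$ we have $a_{r'-2}\le a_{r-1}$, hence the interval $[a_{r'-1}, a_{r'-2}-1]$ is contained in $[1, a_{r-1}-1]$, while the interval $[a_{r-1}, a_{r-2}-1]$ starts at $a_{r-1}$. Thus the two $j$-ranges are disjoint, and so $M_r\cap M_{r'}=\emptyset$. For (2), given any $3\le r \le k$ we use the same trick: since $r-1\le k-1$ and the sequence $(a_i)$ is strictly decreasing, $a_{r-1}\ge a_{k-1}$, so every $j$ appearing in $M_r$ satisfies $j\ge a_{r-1}\ge a_{k-1}$, while every $j$ appearing in $M_0$ satisfies $j\le a_{k-1}-1$; disjointness is immediate.

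There are two small boundary issues to handle to keep the argument clean. First, for $r=k$ the interval in $M_r$ is $[a_{k-1},a_{k-2}-1]$, and I should just note that this starts at $a_{k-1}$ while $M_0$ ends at $a_{k-1}-1$, giving disjointness in the critical edge case. Second, for $r=3$ one has $a_{r-2}=a_1\le n$, so the interval $[a_2,a_1-1]$ is a well-defined subset of $\{1,\dots,n-1\}$ — no degenerate case arises. There is no serious obstacle here: the whole corollary is essentially a bookkeeping consequence of the two preceding lemmas combined with the defining property that $(a_i)$ is a strictly decreasing sequence of positive integers.
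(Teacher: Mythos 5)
Your proof is correct and is essentially the paper's own (one-line) argument written out in full: the paper simply cites Lemma \ref{lemma 6.2} and Lemma \ref{lemma 6.5}, whose explicit weight descriptions reduce both parts to the disjointness of the index intervals $[a_{r-1},a_{r-2}-1]$ and $[1,a_{k-1}-1]$, which is exactly what you verify from the strict decrease of the $a_i$. One typo to fix: the chain ``$r-1<r'-1\le r'-2$'' is false as written and should read $r-1\le r'-2<r'-1$; the consequence $a_{r'-2}\le a_{r-1}$ that you actually use is correct.
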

\begin{proof}
	Proof of (1) and (2) follow from Lemma \ref{lemma 6.2} and Lemma \ref{lemma 6.5}. 
\end{proof}

\begin{lem}\label{lemma 3.3}
Let $c=s_{1}s_{2}\cdots s_{n}.$ Let $T_{r}=c^{r-1}s_{1}s_{2}\cdots s_{n-1},$  for all $ 2\le r \le n.$ Then, $T_{r}(\alpha_{j})<0$ for all $n+1-r\le j \le n-1.$
\end{lem}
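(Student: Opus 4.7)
The plan is to work in the standard coordinate realization of the root system of type $B_n$. With an orthonormal basis $\varepsilon_1,\ldots,\varepsilon_n$, write $\alpha_j=\varepsilon_j-\varepsilon_{j+1}$ for $1\le j\le n-1$ and $\alpha_n=\varepsilon_n$; the simple reflection $s_i$ ($i<n$) transposes $\varepsilon_i\leftrightarrow\varepsilon_{i+1}$, while $s_n$ negates $\varepsilon_n$. In these coordinates the plan is to compute the actions of $u:=s_1s_2\cdots s_{n-1}$ and of $c=s_1s_2\cdots s_n$ on the $\varepsilon_i$'s explicitly, then apply these formulas to $\alpha_j$.

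Unwinding the products (from right to left, using that each $s_i$ fixes every $\varepsilon_l$ with $l\ne i,i+1$) one checks that $u$ is the honest cyclic permutation $\varepsilon_j\mapsto\varepsilon_{j+1}$ for $j<n$, $\varepsilon_n\mapsto\varepsilon_1$, whereas $c$ is the \emph{signed} cyclic shift $\varepsilon_j\mapsto\varepsilon_{j+1}$ for $j<n$ and $\varepsilon_n\mapsto-\varepsilon_1$. A trivial induction then gives the closed form
\[
c^m(\varepsilon_j)\;=\;\begin{cases}\varepsilon_{j+m} & \text{if } 1\le j+m\le n,\\ -\varepsilon_{j+m-n} & \text{if } n+1\le j+m\le 2n,\end{cases}
\]
valid for $0\le m\le 2n-1$. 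From the formula for $u$ one immediately reads off $u(\alpha_j)=\alpha_{j+1}$ for $1\le j\le n-2$, while the boundary case is $u(\alpha_{n-1})=\varepsilon_n-\varepsilon_1=-(\alpha_1+\alpha_2+\cdots+\alpha_{n-1})$; this dichotomy will dictate the two cases in the proof.

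For $n+1-r\le j\le n-2$ I would write $T_r(\alpha_j)=c^{r-1}(\alpha_{j+1})$ and set $j'=j+1\in[\,n+2-r,\,n-1\,]$. The range yields $j'+(r-1)\ge n+1$ and $j'+r\le 2n-1$, so both $\varepsilon_{j'}$ and $\varepsilon_{j'+1}$ fall in the second branch of the box displayed above, giving $c^{r-1}(\alpha_{j'})=-\varepsilon_{j'+r-1-n}+\varepsilon_{j'+r-n}=-\alpha_{j'+r-1-n}$, a negative root. For the remaining case $j=n-1$ I compute $T_r(\alpha_{n-1})=-c^{r-1}(\varepsilon_1-\varepsilon_n)$; since $2\le r\le n$ we have $c^{r-1}(\varepsilon_1)=\varepsilon_r$ (first branch) and $c^{r-1}(\varepsilon_n)=-\varepsilon_{r-1}$ (second branch), so $T_r(\alpha_{n-1})=-(\varepsilon_{r-1}+\varepsilon_r)$, the negative of a positive root of type $B_n$.

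The argument is essentially mechanical once the coordinate realization is installed; the only mild obstacle is isolating the boundary index $j=n-1$, where $u$ does not simply shift a simple root but instead produces a long negative root, requiring a separate (but equally direct) evaluation.
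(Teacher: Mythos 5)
Your proof is correct. The coordinate computations all check out: $u=s_1\cdots s_{n-1}$ is indeed the unsigned cyclic shift, $c=us_n$ the signed one, the closed form for $c^m(\varepsilon_j)$ is right, and both index ranges land cleanly in the branches you claim ($j'+(r-1)\ge n+1$ and $j'+r\le 2n-1$ for the generic case; $\varepsilon_1\mapsto\varepsilon_r$, $\varepsilon_n\mapsto-\varepsilon_{r-1}$ for the boundary case, giving $T_r(\alpha_{n-1})=-(\varepsilon_{r-1}+\varepsilon_r)$, consistent with the paper's base case $T_2(\alpha_{n-1})=-\alpha_0=-(\varepsilon_1+\varepsilon_2)$). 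Your route is genuinely different from the paper's: the paper argues by induction on $r$, writing $T_{l+1}=T_ls_ns_1\cdots s_{n-1}$ and pushing simple roots through one copy of $c$ at a time, which forces it to track the images of $\alpha_{n-2}$ and $\alpha_{n-1}$ through a chain of auxiliary identities ($s_n(\alpha_{n-1})=\alpha_{n-1}+2\alpha_n$, $s_1\cdots s_{n-1}(\alpha_{n-1}+2\alpha_n)=\beta_1+2\alpha_n$, etc.) before the negativity becomes visible. Your approach replaces that bookkeeping with a single closed formula and, as a bonus, identifies the image exactly: $T_r(\alpha_j)=-\alpha_{j+r-n}$ for $n+1-r\le j\le n-2$ and $T_r(\alpha_{n-1})=-(\varepsilon_{r-1}+\varepsilon_r)$, which is strictly more information than the lemma asserts. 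The trade-off is that the coordinate model is specific to type $B_n$, whereas the paper's manipulations stay inside the abstract Weyl group presentation it uses elsewhere; but since the whole paper is already specialized to $PSO(2n+1,\mathbb{C})$, nothing is lost, and your argument is shorter and easier to verify.
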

\begin{proof}
Assume $r=2.$ Then we see that $T_{r}(\alpha_{n-1})=-\alpha_{0}.$ 
We assume that for $2<l<n,$ we have $T_{l}(\alpha_{j})<0$ for $n+1-l \le j \le n-1.$

Note that $T_{l+1}=T_{l}s_{n}s_{1}\cdots s_{n-1}.$ Then for all $n-l\le i\le n-3,$ we have $s_{1}\cdots s_{n-1}(\alpha_{i})=\alpha_{i+1}.$ Since $n-(i+1)\ge 2$ and  $n+1-l\le i+1\le n-1,$ we have  $T_{l+1}(\alpha_{i})=T_{l}(\alpha_{i+1})<0$ (by assumption). Since
$s_{1}\cdots s_{n-1}(\alpha_{n-2})=\alpha_{n-1},$ we have $T_{l+1}(\alpha_{n-2})=T_{l}(s_{n}(-\alpha_{n-1})).$

Since $s_{n}(\alpha_{n-1})=\alpha_{n-1}+ 2\alpha_{n},$ we have $T_{l}s_{n}(\alpha_{n-1})=T_{l}(\alpha_{n-1} + 2\alpha_{n}).$  As $s_{1}\cdots s_{n-1}(\alpha_{n-1} + 2\alpha_{n})=\beta_{1} + 2\alpha_{n},$ we have 
$T_{l}(\alpha_{n-1}+ 2\alpha_{n})=T_{l-1}s_{n}(\beta_{1} + 2\alpha_{n})$.

Since $s_{n}s_{1}\cdots s_{n-1}s_{n}(\beta_{1} + 2\alpha_{n})=-\alpha_{1},$ we have  $T_{l-1}s_{n}(\beta_{1} + 2\alpha_{n})=T_{l-2}(-\alpha_{1}).$ 
Since $s_{n}s_{1}\cdots s_{n-1}(-\alpha_{1})=-\alpha_{2},$ we have $T_{l-2}(-\alpha_{1})=T_{l-3}(-\alpha_{2}).$    Therefore by recursively we have
$T_{l-3}(-\alpha_{2})=-\alpha_{l-1}.$ Hence $T_{l+1}(\alpha_{n-2})=-\alpha_{l-1}<0.$ 
Also it is clear that $T_{l+1}(\alpha_{n-1})<0.$ 
Therefore we have 
$T_{l+1}(\alpha_{j})<0$ for all $n-l \le j\le n-1.$ Hence the result follows.
\end{proof}

\begin{lem}\label{lemma 6.10}
	Let $u,v\in W,$ let $v:=(\prod\limits_{j=1}^{n}s_{j})^{l-1}s_{1}s_{2}\cdots s_{n-1}$ for some positive integer $l\le n,$ such that $l(uv)=l(u) + l(v).$ Let $w=uv.$ If $l\ge 3,$ then $H^i(w,\alpha_{n-1})=0$ for all $i\ge0.$ 
\end{lem}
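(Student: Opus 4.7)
\medskip

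The plan is to produce, from some reduced expression of $w$, a simple reflection $s_{\alpha}$ at the tail with $\langle \alpha_{n-1}, \alpha \rangle = -1$, and then apply Lemma \ref{lemma 1.2}(4) to get the total vanishing $H^j(w, \alpha_{n-1})=0$ for all $j \geq 0$ in one stroke. The natural candidate is $\alpha = \alpha_{n-2}$, since in type $B_n$ with $n\geq 3$ the simple roots $\alpha_{n-2}, \alpha_{n-1}$ are both long and adjacent in the Dynkin diagram, so $\langle \alpha_{n-1}, \alpha_{n-2} \rangle = -1$.

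The first step is to show that $v(\alpha_{n-2}) < 0$. This is exactly where the hypothesis $l \geq 3$ enters: Lemma \ref{lemma 3.3} applied with $r = l$ and $j = n-2$ requires $n+1-l \leq n-2$, i.e., $l \geq 3$, and then yields $v(\alpha_{n-2}) = T_l(\alpha_{n-2}) < 0$. By the standard characterization of descents, this implies that $v$ admits a reduced expression $v = v' s_{n-2}$ with $l(v') = l(v)-1$.

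The second step is to propagate this descent from $v$ to $w = uv$. Since by hypothesis $l(uv) = l(u) + l(v)$, the concatenation of any reduced expression of $u$ with any reduced expression of $v$ is a reduced expression of $w$. Taking the reduced expression of $v$ ending in $s_{n-2}$ produced above, we obtain a reduced expression of $w$ ending in $s_{n-2}$; equivalently, $l(ws_{n-2}) = l(w) - 1$.

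The third step is to invoke Lemma \ref{lemma 1.2}(4) with $\lambda = \alpha_{n-1}$ and $\alpha = \alpha_{n-2}$. Since $\langle \alpha_{n-1}, \alpha_{n-2}\rangle = -1$ and $w$ satisfies $l(w) = l(ws_{n-2})+1$, the lemma gives $H^j(w,\alpha_{n-1}) = 0$ for every $j \geq 0$, completing the proof. There is no real obstacle here beyond correctly identifying the descent provided by Lemma \ref{lemma 3.3}; the only subtle point is ensuring the hypothesis $l \geq 3$ is used precisely to guarantee $\alpha_{n-2}$ lies in the range where $v(\alpha_{n-2})<0$, which is exactly what makes the vanishing criterion applicable at $\lambda = \alpha_{n-1}$ rather than at the \emph{a priori} more delicate weight $\alpha_{n-1}$ encountered in the previous lemmas.
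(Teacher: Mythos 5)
Your proposal is correct, and it is a genuinely more economical route than the one the paper takes, although both arguments rest on the same two ingredients: the descent $v(\alpha_{n-2})<0$ supplied by Lemma \ref{lemma 3.3} (which is where $l\ge 3$ enters, exactly as you say), and the vanishing criterion of Lemma \ref{lemma 1.2}(4) at a weight pairing to $-1$ with a simple root. The paper applies these only to $v$: it first deduces $H^0(v,\alpha_{n-1})=H^1(v,\alpha_{n-1})=0$ from $l(vs_{n-2})=l(v)-1$, then transfers the vanishing to $w=uv$ via the identity $H^0(w,\alpha_{n-1})=H^0(u,H^0(v,\alpha_{n-1}))$ together with an induction on $l(u)$ establishing $H^1(w,\alpha_{n-1})=H^0(u,H^1(v,\alpha_{n-1}))$, and finally invokes \cite[Corollary 6.4]{Ka} to dispose of the degrees $\ge 2$. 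You short-circuit all of that bookkeeping by observing that the hypothesis $l(uv)=l(u)+l(v)$ lets the descent of $v$ at $s_{n-2}$ propagate to a descent of $w$ at $s_{n-2}$ (concatenate a reduced word for $u$ with one for $v$ ending in $s_{n-2}$), so that Lemma \ref{lemma 1.2}(4) with $\lambda=\alpha_{n-1}$, $\alpha=\alpha_{n-2}$ applies directly to $w$ and kills every cohomology degree in one stroke, with no induction and no appeal to \cite{Ka}. The computation $\langle\alpha_{n-1},\alpha_{n-2}\rangle=-1$ is valid since both roots are long in type $B_n$, and the propagation-of-descent step is standard; the argument is complete. (The phrase at the end of your last paragraph contrasting the weight $\alpha_{n-1}$ with itself is evidently a slip of the pen and does not affect the proof.)
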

\begin{proof}
	We note that by SES, we have
	  $$H^0(w, \alpha_{n-1})=H^0(u,H^0(v , \alpha_{n-1})).$$
	  
We show that  $H^0(v , \alpha_{n-1})=0.$

By Lemma \ref{lemma 3.3} we have for each $1\le r\le n-1,$ $c^{r-1}s_{1}\cdots s_{n-1}(\alpha_{j})<0$ for all $n+1-r\le j\le n-1 .$ In particular, we have $l(vs_{n-2})=l(v)-1.$ 

Therefore, by Lemma \ref{lemma 1.2}(4) and using SES, we have $$H^0(v, \alpha_{n-1})=H^0(vs_{n-2}, H^0(s_{n-2},\alpha_{n-1}))=0.\hspace{6.3cm} (5.10.1)$$

Next we show that $H^1(w, \alpha_{n-1})=H^0(u, H^1(v, \alpha_{n-1})).$

We will prove by induction $l(u).$ If $l(u)=0$ then it follows trivially. Next suppose that $l(u)>1.$ Then there exists a simple root $\gamma$ such that $l(s_{\gamma}u)=l(u)-1.$ By using SES and (5.10.1) we have $H^0(s_{\gamma}uv, \alpha_{n-1})=0.$ Again, by induction hypotheses $H^1(s_{\gamma}uv, \alpha_{n-1})=H^0(s_{\gamma}u, H^1(v, \alpha_{n-1})).$
Therefore by SES, we have
$$ H^1(w,  \alpha_{n-1})=H^0(s_{\gamma}, H^1(s_{\gamma}uv, \alpha_{n-1}))=H^0(s_{\gamma}, H^0(s_{\gamma}u, H^1(v, \alpha_{n-1})))=H^0(u, H^1(v, \alpha_{n-1})).$$

$H^1(v, \alpha_{n-1})=0,$ follows from the fact that $l(vs_{n-2})=l(v)-1$ and Lemma \ref{lemma 1.2}(4). Thus, we have $H^1(w, \alpha_{n-1})=0.$ Therefore by \cite[Corollary 6.4, p.780]{Ka} we have $H^i(w, \alpha_{n-1})=0$ for all $i\ge 0.$

\end{proof}

\section{cohomology modules of the tangent bundle of $Z(w, \underline{i})$}
Let $w\in W$ and let $w=s_{i_{1}}s_{i_{2}}\cdots s_{i_{r}}$ be a reduced expression for $w$ and let $\underline{i}=(i_{1},i_{2},\dots ,i_{r})$. Let $\tau=s_{i_{1}}s_{i_{2}}\cdots s_{i_{r-1}}$ and $\underline{i'}=(i_{1},i_{2},\dots, i_{r-1}).$

Recall the following long exact sequence of $B$-modules from \cite{CKP} (see \cite[Proposition 3.1, p.673]{CKP}):

$$0\rightarrow H^0(w,\alpha_{i_{r}})\rightarrow H^0(Z(w,\underline{i}), T_{(w,\underline{i})})\rightarrow H^0(Z(\tau,\underline{i'}), T_{(\tau,\underline{i'})}) \rightarrow$$ $$H^1(w,\alpha_{i_{r}})\rightarrow H^1(Z(w,\underline{i}), T_{(w,\underline{i})})\rightarrow H^1(Z(\tau,\underline{i'}), T_{(\tau,\underline{i'})})\rightarrow H^2(w,\alpha_{i_{r}})\rightarrow $$
$$H^2(Z(w,\underline{i}), T_{(w,\underline{i})})\rightarrow H^2(Z(\tau,\underline{i'}), T_{(\tau,\underline{i'})})\rightarrow H^3(w,\alpha_{i_{r}})\rightarrow \cdots$$

By \cite[Corollary 6.4, p.780]{Ka}, we have $H^j(w, \alpha_{i_{r}})=0$ for every $j\ge 2.$ Thus we have the following exact sequence of $B$-modules:

$$0\rightarrow H^0(w,\alpha_{i_{r}})\rightarrow H^0(Z(w,\underline{i}), T_{(w,\underline{i})})\rightarrow H^0(Z(\tau,\underline{i'}), T_{(\tau,\underline{i'})}) \rightarrow$$ $$H^1(w,\alpha_{i_{r}})\rightarrow H^1(Z(w,\underline{i}), T_{(w,\underline{i})})\rightarrow H^1(Z(\tau,\underline{i'}), T_{(\tau,\underline{i'})})\rightarrow 0 $$

Now onwards we call this exact sequence by LES.

Let $w_{0}=s_{j_{1}}s_{j_{2}}\cdots s_{j_{N}}$ be a reduced expression of $w_{0}$ such that $\underline{i}=(j_{1},j_{2},\dots, j_{r})$ and let and $\underline{j}=(j_{1},j_{2},\dots,j_{N}).$

\begin{lem}\label{lemma 7.1}
The natural homomorphism 
$$ f:H^1(Z(w_{0}, \underline{j}), T_{(w_{0}, \underline{j})})\longrightarrow H^1(Z(w, \underline{i}), T_{(w, \underline{i})})$$
of $B$- modules is surjective.
\end{lem}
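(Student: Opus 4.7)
The plan is to factor $f$ as a composition of surjections coming from the LES of each intermediate $\mathbb{P}^{1}$-fibration in the tower that expresses $Z(w_{0},\underline{j})$ over $Z(w,\underline{i})$. For $r\le k\le N$, set $w_{k}:=s_{j_{1}}s_{j_{2}}\cdots s_{j_{k}}$ and $\underline{i}^{(k)}:=(j_{1},\ldots,j_{k})$, so that $w_{N}=w_{0}$, $w_{r}=w$, and, as recalled in Section~2, for each $r\le k\le N-1$ there is a canonical $\mathbb{P}^{1}$-fibration
$$f_{k+1}:Z(w_{k+1},\underline{i}^{(k+1)})\longrightarrow Z(w_{k},\underline{i}^{(k)}).$$
The iterated composite $\pi:=f_{r+1}\circ f_{r+2}\circ\cdots\circ f_{N}$ is a morphism $Z(w_{0},\underline{j})\to Z(w,\underline{i})$, and the natural homomorphism $f$ of the statement is the map on $H^{1}$ of tangent bundles obtained from the differential $d\pi:T_{(w_{0},\underline{j})}\to\pi^{*}T_{(w,\underline{i})}$ together with the canonical isomorphism $H^{1}(Z(w_{0},\underline{j}),\pi^{*}T_{(w,\underline{i})})\cong H^{1}(Z(w,\underline{i}),T_{(w,\underline{i})})$ coming from the fact that $R^{j}\pi_{*}\mathcal{O}=0$ for $j\ge 1$ (since each $f_{k+1}$ is a $\mathbb{P}^{1}$-fibration).

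Next, I would apply the LES recorded just above the statement of the lemma to each individual step $f_{k+1}$, obtaining
$$\cdots\to H^{1}(w_{k+1},\alpha_{j_{k+1}})\to H^{1}(Z(w_{k+1},\underline{i}^{(k+1)}),T_{(w_{k+1},\underline{i}^{(k+1)})})\xrightarrow{g_{k+1}} H^{1}(Z(w_{k},\underline{i}^{(k)}),T_{(w_{k},\underline{i}^{(k)})})\to 0.$$
The terminal zero is already present in the LES by virtue of the Kempf vanishing $H^{2}(w_{k+1},\alpha_{j_{k+1}})=0$ from \cite[Corollary 6.4, p.780]{Ka}. Hence every $g_{k+1}$ is surjective.

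Finally, by the functoriality of the construction of the LES, $g_{k+1}$ is precisely the map on $H^{1}$ induced by the differential of $f_{k+1}$ (composed with the canonical identification coming from $Rf_{k+1,*}\mathcal{O}=\mathcal{O}$). Since the derivative of a composition of morphisms is the composition of the individual derivatives, this shows $f=g_{r+1}\circ g_{r+2}\circ\cdots\circ g_{N}$; as a composition of surjective maps, $f$ is therefore surjective. The only delicate point in this argument is the identification of the LES boundary with the natural map on $H^{1}$ of tangent bundles along each $f_{k+1}$; this compatibility is built into the derivation of the LES from the relative tangent sequence of the $\mathbb{P}^{1}$-fibration, so once it is recorded, no further work is needed.
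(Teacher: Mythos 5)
Your argument is correct and is essentially the standard one: the paper itself gives no independent proof here but simply cites \cite[Lemma 7.1, p.459]{CK}, where the surjectivity is obtained in exactly this way, by iterating the truncated LES (truncated because $H^{2}(w,\alpha)=0$ by \cite[Corollary 6.4]{Ka}) along the tower of $\mathbb{P}^{1}$-fibrations and composing the resulting surjections. The only cosmetic quibble is that the vanishing $H^{2}(w,\alpha_{i_r})=0$ is Kannan's result, not Kempf vanishing.
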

\begin{proof}
(see \cite[Lemma 7.1, p.459]{CK}).	
\end{proof}

\begin{lem}\label{lemma 7.2}
Let $J=S\setminus \{\alpha_{n-1}\}.$ Let $v\in W_{J}$ and $u\in W$ be such that $l(uv)=l(u)+l(v).$ Let $u=s_{i_{1}}\cdots s_{i_{r}}$ and $v=s_{i_{r+1}}\cdots s_{i_{t}}$ be reduced expressions of $u$ and $v$ respectively. Let $\underline{i}=(i_{1},i_{2},\dots,i_{r})$ and $\underline{j}=(i_{1},i_{2},\dots,i_{r},i_{r+1},\dots , i_{t}).$
Then we have 
\begin{enumerate}
	\item [(1)] The natural homomorphism 
	$$H^0(Z(uv,\underline{j}), T_{(uv,\underline{j})})\longrightarrow H^0(Z(u, \underline{i}), T_{(u, \underline{i})})$$ 
	
of $B$-modules is surjective. 

\item[(2)] The natural homomorphism
 $$H^1(Z(uv, \underline{j}), T_{(uv, \underline{j})})\longrightarrow H^1(Z(u, \underline{i}), T_{(u, \underline{i})})$$ 
 
of $B$-modules is an isomorphism.
\end{enumerate}

\end{lem}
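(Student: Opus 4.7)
The plan is to induct on $\ell(v)$. The base case $\ell(v)=0$ is trivial since then $uv=u$, $\underline{j}=\underline{i}$, and both maps are the identity. For the inductive step, the key observation is that since $v\in W_J$ with $J=S\setminus\{\alpha_{n-1}\}$, every simple reflection $s_{i_{r+1}},\ldots,s_{i_t}$ appearing in the chosen reduced expression of $v$ satisfies $i_l\neq n-1$. So removing simple reflections one at a time from the right, we always remove $s_{i_l}$ with $i_l\neq n-1$, which is exactly the situation in which Lemma \ref{lemma 2.5} applies to kill a crucial term in the LES.

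Concretely, write $v=v's_{i_t}$ with $i_t\neq n-1$, and let $\underline{j'}=(i_1,\ldots,i_{t-1})$. Apply the LES recalled at the beginning of Section 7 to the $\mathbb{P}^1$-fibration $Z(uv,\underline{j})\to Z(uv',\underline{j'})$ (associated with the last factor $s_{i_t}$):
\begin{align*}
0 &\to H^0(uv,\alpha_{i_t})\to H^0(Z(uv,\underline{j}),T_{(uv,\underline{j})})\to H^0(Z(uv',\underline{j'}),T_{(uv',\underline{j'})})\\
&\to H^1(uv,\alpha_{i_t})\to H^1(Z(uv,\underline{j}),T_{(uv,\underline{j})})\to H^1(Z(uv',\underline{j'}),T_{(uv',\underline{j'})})\to 0.
\end{align*}
Since $i_t\neq n-1$, Lemma \ref{lemma 2.5} gives $H^1(uv,\alpha_{i_t})=0$. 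Hence the natural map $H^0(Z(uv,\underline{j}),T_{(uv,\underline{j})})\to H^0(Z(uv',\underline{j'}),T_{(uv',\underline{j'})})$ is surjective, and the natural map $H^1(Z(uv,\underline{j}),T_{(uv,\underline{j})})\to H^1(Z(uv',\underline{j'}),T_{(uv',\underline{j'})})$ is an isomorphism.

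Since $uv'$ has the form $u\cdot v'$ with $v'\in W_J$ and $\ell(uv')=\ell(u)+\ell(v')$, the induction hypothesis applies to $v'$: the natural map $H^0(Z(uv',\underline{j'}),T_{(uv',\underline{j'})})\to H^0(Z(u,\underline{i}),T_{(u,\underline{i})})$ is surjective and the natural map $H^1(Z(uv',\underline{j'}),T_{(uv',\underline{j'})})\to H^1(Z(u,\underline{i}),T_{(u,\underline{i})})$ is an isomorphism. Composing with the previous maps yields the desired surjection on $H^0$ and isomorphism on $H^1$, proving both (1) and (2).

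The proof is essentially a bookkeeping argument, so there is no substantial obstacle; the only nontrivial input is the vanishing $H^1(w,\alpha_j)=0$ for $j\neq n-1$ already established in Lemma \ref{lemma 2.5}. One should verify that the reduced-expression hypothesis $\ell(uv)=\ell(u)+\ell(v)$ is preserved at each stage (so that $\ell(uv')=\ell(u)+\ell(v')=\ell(uv)-1$ and the LES is applicable), which follows from the fact that a reduced expression stays reduced after removing its last simple reflection.
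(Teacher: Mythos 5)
Your proof is correct and follows essentially the same route as the paper's: induction on $\ell(v)$, peeling off the last simple reflection $s_{i_t}$ (necessarily with $i_t\neq n-1$ since $v\in W_J$), invoking Lemma \ref{lemma 2.5} to kill $H^1(uv,\alpha_{i_t})$ in the LES, and composing the resulting surjections on $H^0$ and isomorphisms on $H^1$. No substantive differences.
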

\begin{proof}
	Let $r+1\le l\le t.$ Let $v_{l}=us_{i_{r+1}}\cdots s_{i_{l}}$ and $\underline{i}_{l}=(\underline{i},i_{r+1},\dots, i_{l}).$

Proof of (1): By Lemma \ref{lemma 2.5} we have $H^1(v_{t}, \alpha_{i_{t}})=0.$ Therefore, using LES the natural homomorphism
\begin{center}
	$H^0(Z(v_{t},\underline{i}_{t}), T_{(v_{t}, \underline{i}_{t})})\longrightarrow H^0(Z(v_{t-1},\underline{i}_{t-1}), T_{(v_{t-1}, \underline{i}_{t-1})})$ 
\end{center}
is surjective.

 By induction on $l(v)$, the natural homomorphism 
\begin{center}

$H^0(Z(v_{t-1},\underline{i}_{t-1}), T_{(v_{t-1}, \underline{i}_{t-1})})\longrightarrow H^0(Z(u,\underline{i}), T_{(u, \underline{i})})$ 

\end{center}
is surjective. Hence we conclude that the natural homomorphism

\begin{center}
	
	$H^0(Z(uv,\underline{j}), T_{(uv, \underline{j})})\longrightarrow H^0(Z(u,\underline{i}), T_{(u, \underline{i})})$ 
	
\end{center}
is surjective.

Proof of (2): We will prove by induction on $l(v).$ By LES, we have the following exact sequence of $B$-modules:
\begin{center}
$	0\longrightarrow H^0(uv, \alpha_{i_{t}}) \longrightarrow H^0(Z(uv,\underline{j}), T_{(uv, \underline{_{j}})})\longrightarrow
H^0(Z(v_{t-1},\underline{i}_{t-1}), T_{(v_{t-1},\underline{i}_{t-1})})\longrightarrow H^1(uv, \alpha_{i_{t}})\longrightarrow H^1(Z(uv, \underline{j}),  T_{(uv,\underline{j})})\longrightarrow H^1(Z(v_{t-1}, \underline{i}_{t-1}), T_{(v_{t-1}, \underline{i}_{t-1})})\longrightarrow 0. $
\end{center} 
By induction on $l(v),$ the natural homomorphism 

\begin{center}
	
	$H^1(Z(v_{t-1},\underline{i}_{t-1}), T_{(v_{t-1}, \underline{i}_{t-1})})\longrightarrow H^1(Z(u,\underline{i}), T_{(u, \underline{i})})$ 
	
\end{center}
is an isomorphism.

By Lemma \ref {lemma 2.5}, $H^1(uv, \alpha_{i_{t}})=0.$ Therefore, by the above exact sequence we have  $ H^1(Z(uv, \underline{j}),  T_{(uv,\underline{j})})\longrightarrow H^1(Z(v_{t-1}, \underline{i}_{t-1}), T_{(v_{t-1}, \underline{i}_{t-1})})$ is an isomorphism. Hence we conclude that the homomorphism   

\begin{center}
	$H^1(Z(uv,\underline{j}), T_{(uv, \underline{j})})\longrightarrow H^1(Z(u,\underline{i}), T_{(u, \underline{i})})$ 
\end{center} 
of $B$-modules is an isomorphism.
\end{proof}

Recall that by Lemma \ref {lemma 3.2}(1)  and Lemma \ref {lemma 3.4}(2) we have 
\begin{center}
	$w_{0}=(\prod\limits_{l_{1}=1}^{k-1}[a_{l_{1}}, n])([a_{k}, n]^{n+1-k})(\prod\limits_{l_{2}=1}^{k-1}[a_{k}, a_{l_{2}}-1])$
\end{center} 
is a reduced expression for $w_{0}.$ Let $\underline{i}$ be the tuple corresponding to this reduced of $w_{0}.$ Let $u_{1}=w_{k}s_{n}[a_{k}, n-1]$ and $\underline{i}_{1}$ be the tuple corresponding to the reduced expression $\prod\limits_{l_{1}=1}^{k}[a_{l_{1}}, n]([a_{k}, n-1]).$ Note that $a_{k}=1.$ With this notation, we have

\begin{lem}\label{lemma 7.3}
\begin{enumerate}
\item[(1)] The natural homomorphism 
$$H^0(Z(w_{0}, \underline{i}), T_{(w_{0}, \underline{i})})\longrightarrow H^0(Z(u_{1},\underline{i}_{1}), T_{(u_{1}, \underline{i}_{1})})$$ 
of $B$-modules is an isomorphism.

\item[(2)] The natural homomorphism 
$$H^1(Z(w_{0}, \underline{i}), T_{(w_{0}, \underline{i})})\longrightarrow H^1(Z(u_{1},\underline{i}_{1}), T_{(u_{1}, \underline{i}_{1})})$$ 
of $B$-modules is an isomorphism.	

\end{enumerate}
\begin{proof}
For $1\le n-k$, let $u_{j}=w_{k}s_{n}[a_{k}, n]^{j-1}s_{1}s_{2}\cdots s_{n-1}$ and $\underline{i}_{j}$ be the tuple corresponding to the reduced expression $u_{j}=(\prod\limits_{l_{1}=1}^{k}[a_{l_{1}}, n])([a_{k}, n])^{j-1}[a_{k}, n-1]$ (see Lemma \ref {lemma 3.4}(2)). Note that 
$w_{0}=u_{n-k}s_{n}(\prod\limits_{l_{2}=1}^{k-1}[a_{k}, a_{l_{2}}-1]).$  

{\bf Case 1: $a_{1}\neq n$}. In this case, we have  $s_{n}(\prod\limits_{l_{2}=1}^{k-1}[a_{k}, a_{l_{2}-1} ])\in W_{J},$ where $J=S\setminus\{\alpha_{n-1}\}.$

Then by Lemma \ref {lemma 7.2}, the natural homomorphism 
$$H^0(Z(w_{0}, \underline{i}), T_{(w_{0}, \underline{i})})\longrightarrow H^0(Z(u_{n-k},\underline{i}_{n-k}), T_{(u_{n-k}, \underline{i}_{n-k})})\hspace{5cm}        (6.3.1)$$
is surjectie and the natural homomorphism 
$$H^1(Z(w_{0}, \underline{i}), T_{(w_{0}, \underline{i})})\longrightarrow H^1(Z(u_{n-k},\underline{i}_{n-k}), T_{(u_{n-k}, \underline{i}_{n-k})})$$ of $B$-modules is an isomorphism. \hspace{8.8cm}	 (6.3.2)
		
If $j\ge 2,$ then by Lemma	\ref {lemma 6.10}, we have $H^1(u_{j}, \alpha_{n-1})=0.$ Let $u'_{j}=u_{j}s_{n-1}$ and let $\underline{i'}_{j}$ be the partial subsequence of $u'_{j}$ such that $\underline{i}_{j}=(\underline{i'}_{j}, n-1).$ Hence by LES , we observe that the natural homomorphism
$$H^0(Z(u_{j}, \underline{i}_{j}), T_{(u_{j}, \underline{i}_{j})})\longrightarrow H^0(Z(u'_{j},\underline{i'}_{j}), T_{(u'_{j}, \underline{i'}_{j})})\hspace{6.5cm}	      (6.3.3)$$ is surjective and $$ H^1(Z(u_{j}, \underline{i}_{j}), T_{(u_{j}, \underline{i}_{j})})\longrightarrow H^1(Z(u'_{j},\underline{i'}_{j}), T_{(u'_{j}, \underline{i'}_{j})})\hspace{6.5cm}       (6.3.4)$$ is an isomorphism.
Therefore by Lemma \ref{lemma 7.2} we have  the natural homomorphism $$H^0(Z(u'_{j}, \underline{i'}_{j}), T_{(u'_{j}, \underline{i'}_{j})})\longrightarrow H^0(Z(u_{j-1},\underline{i}_{j-1}), T_{(u_{j-1}, \underline{i}_{j-1})})\hspace{5.1cm}	         (6.3.5)$$
is surjective and 
$$ H^1(Z(u'_{j}, \underline{i'}_{j}), T_{(u'_{j}, \underline{i'}_{j})})\longrightarrow H^1(Z(u_{j-1},\underline{i}_{j-1}), T_{(u_{j-1}, \underline{i}_{j-1})})\hspace{5cm}               (6.3.6)$$ is an isomorphism. Therefore, by combining  (6.3.3), (6.3.5) together and (6.3.4), (6.3.6) together we see that the homomorphism 
$$ H^0(Z(u_{j}, \underline{i}_{j}), T_{(u_{j}, \underline{i}_{j})})\longrightarrow H^0(Z(u_{j-1},\underline{i}_{j-1}), T_{(u_{j-1}, \underline{i}_{j-1})})$$                
is surjective and $$ H^1(Z(u_{j}, \underline{i}_{j}), T_{(u_{j}, \underline{i}_{j})})\longrightarrow H^1(Z(u_{j-1},\underline{i}_{j-1}), T_{(u_{j-1}, \underline{i}_{j-1})})$$ is an isomorphism.

Proceeding recursively  we get that the homomorphism 
$$H^0(Z(w_{0}, \underline{i}), T_{(w_{0}, \underline{i})})\longrightarrow H^0(Z(u_{1},\underline{i}_{1}), T_{(u_{1}, \underline{i}_{1})})$$ 
of $B$-modules is surjective and homomorphism 
$$H^1(Z(w_{0}, \underline{i}), T_{(w_{0}, \underline{i})})\longrightarrow H^1(Z(u_{1},\underline{i}_{1}), T_{(u_{1}, \underline{i}_{1})})$$ of $B$-modules is an isomorphism.

Further since  $u_{1}^{-1}(\alpha_{0})<0,$ by \cite[Lemma 6.2, p.667]{CKP}, we have $H^0(Z(u_{1}, \underline{i}_{1}), T_{(u_{1}, \underline{i}_{1})})_{-\alpha_{0}}\neq 0.$ By \cite[Theorem 7.1]{CKP}, $H^0(Z(w_{0}, \underline{i}), T_{(w_{0}, \underline{i})})$ is a parabolic subalgebra of $\mathfrak{g}$ and hence there is a unique $B$-stable line in $H^0(Z(w_{0}, \underline{i}), T_{(w_{0}, \underline{i})}),$ namely $\mathfrak{g}_{-\alpha_{0}}.$ Thereore we conclude that the natural homomorphism
$$H^0(Z(w_{0}, \underline{i}), T_{(w_{0}, \underline{i})})\longrightarrow H^0(Z(u_{1},\underline{i}_{1}), T_{(u_{1}, \underline{i}_{1})})$$ 
of $B$-modules is an isomorphism.

{\bf Case 2 : $a_{1}=n$}

Then by Lemma \ref {lemma 7.2}, the natural morphism

$$H^0(Z(w_{0}, \underline{i}), T_{(w_{0}, \underline{i})})\longrightarrow H^0(Z(u_{n+1-k},\underline{i}_{n+1-k}), T_{(u_{n+1-k}, \underline{i}_{n+1-k})})$$ 

is surjectie and the natural homomorphism 

$$H^1(Z(w_{0}, \underline{i}), T_{(w_{0}, \underline{i})})\longrightarrow H^1(Z(u_{n+1-k},\underline{i}_{n+1-k}), T_{(u_{n+1-k}, \underline{i}_{n+1-k})})$$ 
of $B$-modules is an isomorphism.	

If $j\ge 2,$ then by Lemma	\ref {lemma 6.10}, we have $H^1(u_{j}, \alpha_{n-1})=0.$ Hence by LES, for each $2\le j\le n+1-k$, we observe that the natural homomorphism
$$H^0(Z(u_{j}, \underline{i}_{j}), T_{(u_{j}, \underline{i}_{j})})\longrightarrow H^0(Z(u_{j-1},\underline{i}_{j-1}), T_{(u_{j-1}, \underline{i}_{j-1})})$$	
is surjective and 
$$ H^1(Z(u_{j}, \underline{i}_{j}), T_{(u_{j}, \underline{i}_{j})})\longrightarrow H^1(Z(u_{j-1},\underline{i}_{j-1}), T_{(u_{j-1}, \underline{i}_{j-1})})$$
is an isomorphism. Therefore, the homomorphism 	
$$H^0(Z(w_{0}, \underline{i}), T_{(w_{0}, \underline{i})})\longrightarrow H^0(Z(u_{1},\underline{i}_{1}), T_{(u_{1}, \underline{i}_{1})})$$ of $B$-modules is surjective and the homomorphism 
$$H^1(Z(w_{0}, \underline{i}), T_{(w_{0}, \underline{i})})\longrightarrow H^1(Z(u_{1},\underline{i}_{1}), T_{(u_{1}, \underline{i}_{1})})$$ of $B$-modules is an isomorphism.

Further since  $u_{1}^{-1}(\alpha_{0})<0,$ by \cite[Lemma 6.2, p.667]{CKP}, we have $H^0(Z(u_{1}, \underline{i}_{1}), T_{(u_{1}, \underline{i}_{1})})_{-\alpha_{0}}\neq 0.$ By \cite[Theorem 7.1]{CKP}, $H^0(Z(w_{0}, \underline{i}), T_{(w_{0}, \underline{i})})$ is a parabolic subalgebra of $\mathfrak{g}$ and hence there is a unique $B$-stable line in $H^0(Z(w_{0}, \underline{i}), T_{(w_{0}, \underline{i})}),$ namely $\mathfrak{g}_{-\alpha_{0}}.$ Thereore we conclude that the natural homomorphism
$$H^0(Z(w_{0}, \underline{i}), T_{(w_{0}, \underline{i})})\longrightarrow H^0(Z(u_{1},\underline{i}_{1}), T_{(u_{1}, \underline{i}_{1})})$$ 
of $B$-modules is an isomorphism.
\end{proof}

The following is a useful Corollary.
\begin{cor} \label{cor 7.4}
If $\mu \in X(T)\setminus \{0\},$ then, we have $$dim(H^0(Z(u_{1}, \underline{i}_{1}), T_{(u_{1}, \underline{i}_{1})} )_{\mu})\le 1.$$

\begin{proof}	
By \cite[Theorem 7.1]{CKP}, $H^0(Z(w_{0}, \underline{i}), T_{(w_{0}, \underline{i})})$ is a parabolic subalgebra of $\mathfrak{g}.$ By Lemma \ref{lemma 7.3}(1) we have

$$H^0(Z(w_{0}, \underline{i}), T_{(w_{0}, \underline{i})})\simeq H^0(Z(u_{1}, \underline{i_{1}}), T_{(u_{1}, \underline{i_{1}})}).$$ (as $B$-modules).

Hence for any $\mu \in X(T)\setminus \{0\},$ we have 

$$dim(H^0(Z(u_{1}, \underline{i}_{1}), T_{(u_{1}, \underline{i}_{1})})_{\mu})\le 1.$$
\end{proof}
\end{cor}

Let $u'_{1}=w_{k}s_{n}[a_{k}, n-2]$ and let $\underline{i'}_{1}$ be the tuple corresponding to the reduced expression $\prod\limits_{l_{1}=1}^{k}[a_{l_{1}}, n][a_{k}, n-2].$ Let $3\le r \le k,$ and let ${\underline{j}}_{r}=(a_{1},\dots,n;a_{2},\dots,n;\dots; a_{r-1},\dots, n; a_{r},\dots,n-1)$ and $\underline{j'}_{r}=(a_{1},\dots,n;a_{2},\dots,n; a_{r},\dots,n-2).$ 

We now prove 

\begin{lem}\label{lemma 7.5}
Let $\mu \in X(T)\setminus \{0\}.$ 

\begin{enumerate}
	\item [(1)] If $H^1(u_{1}, \alpha_{n-1})_{\mu}=0,$ then dim($H^0(Z(u'_{1}, \underline{i'}_{1}), T_{(u'_{1}, \underline{i'}_{1})})_{\mu}$)$\le 1.$
	\item[(2)] If $H^1(u_{1}, \alpha_{n-1})_{\mu}\neq0,$ then dim($H^0(Z(u'_{1}, \underline{i'}_{1}), T_{(u'_{1}, \underline{i'}_{1})})_{\mu}$)$= 2,$ and the natural homomorphism
	
	$$H^0(Z(u'_{1}, \underline{i'}_{1}), T_{(u'_{1}, \underline{i'}_{1})})_{\mu}\longrightarrow H^1(u_{1}, \alpha_{n-1})_{\mu}$$ is surjective.

\end{enumerate}
\end{lem}
\begin{proof}
	By LES, we have the following long exact sequence of $B$-modules:
$$ 0\longrightarrow H^0(u_{1}, \alpha_{n-1})\longrightarrow H^0(Z(u_{1}, \underline{i}_{1}), T_{(u_{1}, \underline{i_{1}})})\longrightarrow $$
$$H^0(Z(u'_{1}, \underline{i'}_{1}), T_{(u'_{1}, \underline{i'}_{1})})\longrightarrow H^1(u_{1}, \alpha_{n-1})\longrightarrow \cdots \hspace{5.7cm}            (6.5.1)$$

Proof of (1): Assume that $H^1(u_{1}, \alpha_{n-1})_{\mu}=0$ and $\mu \in X(T)\setminus \{0\},$ then by the above exact sequence the natural homoorphism (of $T$-modules) $H^0(Z(u_{1}, \underline{i}_{1}), T_{(u'_{1}, \underline{i}_{1})})_{\mu}\longrightarrow H^0(Z(u'_{1}, \underline{i'}_{1}), T_{(u_{1}', \underline{i'}_{1})})_{\mu} $ is surjective. By Corollary \ref{cor 7.4}, we have dim($H^0(Z(u_{1}, \underline{i}_{1}), T_{(u_{1},\underline{i}_{1})})_{\mu}$)$\le 1.$

Proof of (2): Assume that $H^1(u_{1}, \alpha_{n-1})_{\mu}\neq0.$ Then by Lemma \ref{lemma 6.5}, $\mu$ is of the form $-(\beta_{j}+\alpha_{n})$ for some $1 \le  j \le a_{k-1}-1$ and dim($H^1(u_{1}, \alpha_{n-1})_{\mu}$)=1. Hence by using Corollary \ref{cor 7.4}, we see that if $H^1(u_{1}, \alpha_{n-1})_{\mu}\neq 0,$ then 

$dim(H^0(Z(u'_{1}, \underline{i'}_{1}), T_{(u'_{1}, \underline{i'}_{1})})_{\mu})\le 2.$ \hspace{8cm}   (6.5.2)
 
Let $\underline{j''}_{k}=(\underline{j}_{k}, n)$ be the tuple corresponding to the reduced expression $w_{k}s_{n}=\prod\limits_{l_{1}=1}^{k}[a_{l_{1}}, n].$ Then
by Lemma \ref{lemma 7.2} the natural homomorphism 
\begin{center}
	$H^0(Z(u'_{1}, \underline{i'}_{1}), T_{(u'_{1}, \underline{i'}_{1})})\longrightarrow H^0(Z(w_{k}s_{n}, \underline{j''}_{k}), T_{(w_{k}s_{n}, \underline{j''}_{k})})$  
\end{center}
is surjective. By (6.5.2), we have dim($H^0(Z(w_{k}s_{n}, \underline{j''}_{k}), T_{(w_{k}s_{n}, \underline{j''}_{k})})_{\mu}$)$\le2.$ \hspace{1.5cm}(6.5.3) 

Again by the Lemma \ref{lemma 7.2} the natural homomorphism
\begin{center}
	$H^0(Z(w_{k}s_{n}, \underline{j''}_{k}), T_{(w_{k}s_{n}, \underline{j''}_{k})})\longrightarrow H^0(Z(w_{k}, \underline{j}_{k}), T_{(w_{k}, \underline{j}_{k})})$ \hspace{4.5cm}(6.5.4)  
\end{center}
is surjective. Recall that $u'_{t}=u_{k}s_{n-1}.$ By LES we have the following long exact sequence of $B$-modules:
\begin{center}
	$0\longrightarrow H^0(w_{k}, \alpha_{n-1})\longrightarrow H^0(Z(w_{k}, \underline{j}_{k}), T_{(w_{k}, \underline{j}_{k})})\longrightarrow $

	$H^0(Z(u'_{k}, \underline{i'}_{k}), T_{(u'_{k}, \underline{i'}_{k})})\longrightarrow H^1(w_{k}, \alpha_{n-1})\longrightarrow \cdots$
\end{center}
Since $H^1(u_{1}, \alpha_{n-1})_{\mu}\neq0,$ by Corollary \ref {cor 6.8} we have $H^1(w_{k}, \alpha_{n-1})_{\mu}=0.$
Therefore we have an exact sequence 
\begin{center}
	$0\longrightarrow H^0(w_{k}, \alpha_{n-1})_{\mu}\longrightarrow H^0(Z(w_{k}, \underline{j}_{k}), T_{(w_{k}, \underline{j}_{k})})_{\mu}\longrightarrow $

$H^0(Z(\tau_{k}, \underline{j'}_{k}), T_{(\tau_{k}, \underline{j'}_{k})})_{\mu}\longrightarrow 0.$
\end{center}
Let $\sigma_{k}=w_{k-1}s_{n}s_{1}\cdots s_{n+1-k}$ and $\underline{j^*}_{k}=(\underline{j}_{k-1}, n, 1, 2,\dots , n+1-k)$ be the tuple corresponding to this reduced expression of $\sigma_{k}.$ Then by using Lemma \ref {lemma 7.2}, the natural homomorphism 
\begin{center}
	$H^0(Z(\tau_{k}, \underline{j'}_{k}), T_{(\tau_{k}, \underline{j'}_{k})})\longrightarrow H^0(Z(\sigma_{k}, \underline{j^*}_{k}), T_{(\sigma_{k},  \underline{j^*}_{k})})$     \hspace{5cm}(6.5.5)
\end{center} 
is surjective. Therefore the natural map
$$H^0(Z(w_{k}, \underline{j}_{k}), T_{(w_{k}, \underline{j}_{k})})_{\mu}\longrightarrow H^0(Z(\sigma_{k}, \underline{j^*}_{k}),   T_{(\sigma_{k}, \underline{j^*}_{k})})_{\mu}\hspace{5cm} (6.5.6)$$
is surjective.

Since $H^1(u_{1}, \alpha_{n-1})_{\mu}\neq0,$ by Corollary \ref{cor 6.6} we have $H^0(\sigma_{k},  \alpha_{n+1-k})_{\mu}\neq0.$  Therefore, $H^0(Z(\sigma_{k}, \underline{j^*}_{k}),   T_{(\sigma_{k}, \underline{j^*}_{k})})_{\mu}\neq0.$ Thus from (6.5.6) we have $H^0(Z(w_{k}, \underline{j}_{k}), T_{(w_{k}, \underline{j}_{k})})_{\mu}\neq0.$ Then we have an exact sequence of $T$-modules
\begin{center}
	$0\longrightarrow H^0(w_{k}s_{n}, \alpha_{n})_{\mu}\longrightarrow H^0(Z(w_{k}s_{n}, \underline{j''}_{k}), T_{(w_{k}s_{n}, \underline{j''}_{k})})_{\mu}\longrightarrow H^0(Z(w_{k}, \underline{j}_{k}),   T_{(w_{k}, \underline{j}_{k})})_{\mu}\longrightarrow0$
\end{center}
Since $H^1(u_{1}, \alpha_{n-1})_{\mu}\neq 0,$ by Corollary \ref {cor 6.7} we have $H^0(w_{k}s_{n}, \alpha_{n})_{\mu}\neq0.$

Therefore dim($H^0(Z(w_{k}s_{n}, \underline{j''}_{k}), T_{(w_{k}s_{n},
\underline{j''}_{k})})_{\mu}$)$\ge 2.$ On the other hand by (6.5.3) we have dim($H^0(Z(w_{k}s_{n}, \underline{j''}_{k}), T_{(w_{k}s_{n}, \underline{j''}_{k})})_{\mu}$)$\le2.$ Hence we have dim($H^0(Z(w_{k}s_{n}, \underline{j''}_{k}), T_{(w_{k}s_{n}, \underline{j''}_{k})})_{\mu}$)$=2.$ 

Since the natural homomorphism 
\begin{center}
	$H^0(Z(u'_{1}, \underline{i'}_{1}), T_{(u'_{1}, \underline{i'}_{1})})\longrightarrow H^0(Z(w_{k}s_{n}, \underline{j''}_{k}), T_{(w_{k}s_{n}, \underline{j''}_{k})})$ 
\end{center}
is surjective, we have dim($H^0(Z(u'_{1}, \underline{i'}_{1}), T_{(u'_{1}, \underline{i'}_{1})_{\mu}})$)$=2.$      \hspace{5.4cm} (6.5.7)

It is clear from (6.5.1),(6.5.7), and Corollary \ref{cor 7.4}, that

\begin{center}
	$H^0(Z(u'_{1}, \underline{i'}_{1}), T_{(u'_{1}, \underline{i'}_{1})})_{\mu}\longrightarrow H^1(u_{1}, \alpha_{n-1})_{\mu}$
\end{center}

is surjective.

\end{proof}

\begin{cor}\label{cor 7.6}
	The natural homomorphism 
\begin{center}
	$H^0(Z(u'_{1}, \underline{i'}_{1}), T_{(u'_{1}, \underline{i'}_{1})})\longrightarrow H^1(u_{1}, \alpha_{n-1})$
\end{center}
is surjective.
\end{cor}
\begin{proof}
	Note that by Lemma \ref{lemma 6.5}, if $H^1(u_{1}, \alpha_{n-1})_{\mu}\neq0$ then $\mu \in X(T)\setminus \{0\}.$ Now the proof follows from Lemma \ref {lemma 7.5}.
\end{proof}

\begin{lem}\label{lemma 7.7}

\begin{enumerate}
\item[(1)]

 If $H^1(w_{m}, \alpha_{n-1})_{\mu}=0$ for all $r\le m \le k$ and $H^1(u_{1}, \alpha_{n-1})_{\mu}=0,$ then  dim$(H^0(Z(\tau_{r}, \underline{j'}_{r}), T_{(\tau_{r}, \underline{j'}_{r})})_{\mu})$ $\le 1.$ 

\item[(2)] Let $3\le r\le k.$ If $H^1(w_{r}, \alpha_{n-1})_{\mu}\neq0,$ then dim$(H^0(Z(\tau_{r}, \underline{j'}_{r}), T_{(\tau_{r}, \underline{j'}_{r})})_{\mu})=2,$ and the natural homomorphism

\begin{center}
	$H^0(Z(\tau_{r}, \underline{j'}_{r}), T_{(\tau_{r}, \underline{j'}_{r})})_{\mu}\longrightarrow H^1(w_{r}, \alpha_{n-1})_{\mu}$
\end{center}
is surjective.
\end{enumerate}	
	
\end{lem}
\begin{proof}
Proof of (1): If $H^1(u_{1}, \alpha_{n-1})_{\mu}=0,$ then by Lemma \ref {lemma 7.5}, we have 

dim($H^0(Z(u'_{1}, \underline{i'}_{1}), T_{(u'_{1}, \underline{i'}_{1})})_{\mu}$)$\le 1.$ By Lemma  \ref {lemma 7.2}, the natural homomorphism 
\begin{center}
	$H^0(Z(u'_{1}, \underline{i'}_{1}), T_{(u'_{1}, \underline{i'}_{1})})_{\mu}\longrightarrow H^0(Z(w_{k}, \underline{j}_{k}), T_{(w_{k}, \underline{j}_k)})_{\mu} $
\end{center}
is surjective. If $H^1(w_{m}, \alpha_{n-1})_{\mu}=0$ for all $r\le m\le k,$ by using LES, we see that the natural homomorphism 
\begin{center}
	$H^0(Z(w_{k}, \underline{j}_{k}), T_{(w_{k}, \underline{j}_{k})})_{\mu}\longrightarrow H^0(Z(\tau_{r}, \underline{j'}_{r}), T_{(\tau_{r}, \underline{j'}_{r})})_{\mu} $
\end{center} 
is surjective. Therefore, we have dim($H^0(Z(\tau_{r}, \underline{j'}_{r}), T_{(\tau_{r}, \underline{j'}_{r})})_{\mu}$)$\le 1.$

Proof of (2): Assume that $H^1(w_{r}, \alpha_{n-1})_{\mu}\neq 0.$ Then by Corollary \ref {cor 6.8}, we have $H^1(w_{m}, \alpha_{n-1})_{\mu}=0$ for all $r+1\le m \le k$ and $H^1(u_{1}, \alpha_{n-1})_{\mu}=0.$ Then by $(1),$ we have 
\begin{center}
dim($H^0(Z(\tau_{r+1}, \underline{j'}_{r+1}), T_{(\tau_{r+1}, \underline{j'}_{r+1})})_{\mu}$)$\le 1.$	
\end{center}
By Lemma \ref {lemma 7.2}, the natural homomorphism
\begin{center}
	$H^0(Z(\tau_{r+1}, \underline{j'}_{r+1}), T_{(\tau_{r+1}, \underline{j'}_{r+1})})_{\mu}\longrightarrow H^0(Z(w_{r}, \underline{j}_{r}), T_{(w_{r}, \underline{j}_{r})})_{\mu}$
\end{center}
is surjective. Hence, we have dim($H^0(Z(w_{r}, \underline{j}_{r}), T_{(w_{r}, \underline{j}_{r})})_{\mu}$)$\le 1.$ \hspace{5cm} (6.7.1)

By LES we have the following long exact sequence of $B$-modules:

\begin{center}
	$0\longrightarrow H^0(w_{r}, \alpha_{n-1})\longrightarrow H^0(Z(w_{r}, \underline{j}_{r}), T_{(w_{r}, \underline{j}_{r})})\longrightarrow $

$H^0(Z(\tau_{r}, \underline{j'}_{r}), T_{(\tau_{r}, \underline{j'}_{r})})\longrightarrow H^1(w_{r}, \alpha_{n-1})\longrightarrow \cdots$
\end{center}
Since dim($H^0(Z(w_{r}, \underline{j}_{r}), T_{(w_{r}, \underline{j}_{r})})_{\mu}$)$\le 1$ and dim($H^1(w_{r}, \alpha_{n-1})_{\mu}$)$=1,$ 
we see that 

dim($H^0(Z(\tau_{r}, \underline{j'}_{r}))_{\mu}$)$\le 2.$  \hspace{10.9cm}   (6.7.2)

Let $\underline{j''}_{r-1}=(\underline{j}_{r-1}, n)$ be the tuple corresponding to the reduced expression $w_{r-1}s_{n}=\prod\limits_{l_{1}=1}^{r-1}[a_{l_{1}}, n].$ Then
by Lemma \ref{lemma 7.2} the natural homomorphism 
\begin{center}
	$H^0(Z(\tau_{r}, \underline{j'}_{r}), T_{(\tau_{r}, \underline{j'}_{r})})\longrightarrow H^0(Z(w_{r-1}s_{n}, \underline{j''}_{r-1}), T_{(w_{r-1}s_{n}, \underline{j''}_{r-1})})$  
\end{center}
is surjective. By (6.7.2), we have  dim($H^0(Z(w_{r-1}s_{n}, \underline{j''}_{r-1}), T_{(w_{r-1}s_{n}, \underline{j''}_{r-1})_{\mu}})$)$\le2.$ \hspace{2cm}(6.7.3)

Again by the Lemma \ref{lemma 7.2} the natural homomorphism
\begin{center}
$H^0(Z(w_{r-1}s_{n}, \underline{j''}_{r-1}), T_{(w_{r-1}s_{n}, \underline{j''}_{r-1})})\longrightarrow H^0(Z(w_{r-1}, \underline{j}_{r-1}), T_{(w_{r-1}, \underline{j}_{r-1})})$ \hspace{2.2cm}(6.7.4)  
\end{center}
is surjective.

By LES we have the following long exact sequence of $B$-modules:
\begin{center}
	$0\longrightarrow H^0(w_{r-1}, \alpha_{n-1})\longrightarrow H^0(Z(w_{r-1}, \underline{j}_{r-1}), T_{(w_{r-1}, \underline{j}_{r-1})})\longrightarrow $

	$H^0(Z(\tau_{r-1}, \underline{j'}_{r-1}), T_{(\tau_{r-1}, \underline{j'}_{r-1})})\longrightarrow H^1(w_{r-1}, \alpha_{n-1})\longrightarrow \cdots$
\end{center}

Since $H^1(w_{r}, \alpha_{n-1})_{\mu}\neq0$, then by Corollary \ref {cor 6.8} we have $H^1(w_{r-1}, \alpha_{n-1})_{\mu}=0.$
Therefore we have an exact sequence

	$$0\longrightarrow H^0(w_{r-1}, \alpha_{n-1})_{\mu}\longrightarrow H^0(Z(w_{r-1}, \underline{j}_{r-1}), T_{(w_{r-1}, \underline{j}_{r-1})})_{\mu}\longrightarrow $$
$$	H^0(Z(\tau_{r-1}, \underline{j'}_{r-1}), T_{(\tau_{r-1}, \underline{j'}_{r-1})})_{\mu}\longrightarrow 0 \hspace{8.7cm} (6.7.5)$$

Let $\sigma_{r-1}=w_{r-2}s_{n}s_{a_{r-1}\cdots s_{n+2-r}},$ and let $\underline{j^*}_{r-1}=(\underline{j}_{r-2}, n, a_{r-1}, a_{r-1}+1,\dots , n+2-r)$ be the reduced expression of $\sigma_{r-1}.$

Then by using Lemma \ref {lemma 7.2} the natural homomorphism 
$$H^0(Z(\tau_{r-1}, \underline{j'}_{r-1}), T_{(\tau_{r-1}, \underline{j'}_{r-1})})\longrightarrow H^0(Z(\sigma_{r-1}, \underline{j^*}_{r-1}), T_{(\sigma_{r-1}, \underline{j^*}_{r-1})})\hspace{3.6cm}(6.7.6) $$ 
is surjective. Therefore, by (6.7.5) and (6.7.6) we have

$H^0(Z(w_{r-1}, \underline{j}_{r-1}), T_{(w_{r-1}, \underline{j}_{r-1})})_{\mu}\longrightarrow H^0(Z(\sigma_{r-1}, \underline{j^*}_{r-1}),   T_{(\sigma_{r-1}, \underline{j^*}_{r-1})})_{\mu} \hspace{2.8cm} (6.7.7)$ is surjective. Since $H^1(w_{r}, \alpha_{n-1})_{\mu}\neq0$, by Corollary \ref{cor 6.3} we have $H^0(\sigma_{r-1},  \alpha_{n+2-r})_{\mu}\neq0.$
Therefore, $H^0(Z(\sigma_{r-1}, \underline{j^*}_{r-1}),   T_{\sigma_{r-1}, \underline{j^*_{r-1}})})_{\mu}\neq0.$
 
Thus from (6.7.7) we have $H^0(Z(w_{r-1}, \underline{j}_{r-1}), T_{(w_{r-1}, \underline{j}_{r-1})})_{\mu}\neq0.$ Then we have an exact sequence of $T$-modules
\begin{center}
$0\longrightarrow H^0(w_{r-1}s_{n}, \alpha_{n})_{\mu}\longrightarrow H^0(Z(w_{r-1}s_{n}, \underline{j''}_{r-1}), T_{(w_{r-1}s_{n}, \underline{j''}_{r-1})})_{\mu}\longrightarrow H^0(Z(w_{r-1}, \underline{j}_{r-1}),   T_{(w_{r-1}, \underline{j}_{r-1})})_{\mu}\longrightarrow0 $
\end{center}

Since $H^1(w_{r}, \alpha_{n-1})_{\mu}\neq 0,$ by Corollary \ref {cor 6.4} we have $H^0(w_{r-1}s_{n}, \alpha_{n})_{\mu}\neq0.$

Therefore dim($H^0(Z(w_{r-1}s_{n}, \underline{j''}_{r-1}), T_{(w_{r-1}s_{n}, \underline{j''}_{r-1})_{\mu}})$)$\ge 2.$

Hence, by (6.7.3) we have dim($H^0(Z(w_{r-1}s_{n}, \underline{j''}_{r-1}), T_{(w_{r-1}s_{n}, \underline{j''}_{r-1})})_{\mu}$)$=2.$

Since the natural homomorphism 
\begin{center}
	$H^0(Z(\tau_{r}, \underline{j'}_{r}), T_{(\tau_{r}, \underline{j'}_{r})})\longrightarrow H^0(Z(w_{r-1}s_{n}, \underline{j''}_{r-1}), T_{(w_{r-1}s_{n}, \underline{j''}_{r-1})})$ 
\end{center}
is surjective, we have dim($H^0(Z(\tau_{r}, \underline{j'}_{r}), T_{(\tau_{r}, \underline{j'}_{r})})_{\mu}$)$=2.$      

Therefore by (6.7.1),
\begin{center}
	$H^0(Z(\tau_{r}, \underline{j'}_{r}), T_{(\tau_{r}, \underline{j'}_{r})})_{\mu}\longrightarrow H^1(w_{r}, \alpha_{n-1})_{\mu}$
\end{center}
is surjective.

\end{proof}
\end{lem}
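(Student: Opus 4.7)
My plan for part (1) is to propagate the dimension bound of Lemma \ref{lemma 7.5}(1) via a chain of surjections all the way from $H^0(Z(u'_1,\underline{i'}_1), T_{(u'_1,\underline{i'}_1)})_\mu$ down to $H^0(Z(\tau_r,\underline{j'}_r), T_{(\tau_r,\underline{j'}_r)})_\mu$. The chain alternates between two moves: Lemma \ref{lemma 7.2} supplies a surjection whenever the next element factors through the current one with an extra word lying in $W_J$, where $J = S \setminus \{\alpha_{n-1}\}$; LES supplies a surjection $H^0(Z(w,\underline{i}), T_{(w,\underline{i})}) \to H^0(Z(\tau,\underline{i'}), T_{(\tau,\underline{i'})})$ when the dropped last letter is $s_{n-1}$ and $H^1(w, \alpha_{n-1})_\mu = 0$. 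Using the identities $u'_1 = w_k \cdot s_n[a_k, n-2]$ and $\tau_m = w_{m-1} \cdot s_n[a_m, n-2]$ (both trailing words lie in $W_J$), I apply these moves alternately along $u'_1, w_k, \tau_k, w_{k-1}, \tau_{k-1}, \ldots, w_r, \tau_r$; the LES steps are legal precisely because of the hypotheses $H^1(w_m, \alpha_{n-1})_\mu = 0$ for $r \le m \le k$, and the final bound follows from $\dim H^0(Z(u'_1,..),T)_\mu \le 1$.

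For part (2) I first derive the upper bound $\dim H^0(Z(\tau_r,\underline{j'}_r),T_{(\tau_r,\underline{j'}_r)})_\mu \le 2$. The assumption $H^1(w_r, \alpha_{n-1})_\mu \ne 0$ combined with Corollary \ref{cor 6.8} gives $H^1(w_m, \alpha_{n-1})_\mu = 0$ for $m \ne r$ and $H^1(u_1, \alpha_{n-1})_\mu = 0$. Applying part (1) with $r$ replaced by $r+1$, then pushing through the factorization $\tau_{r+1} = w_r \cdot s_n[a_{r+1}, n-2]$ via Lemma \ref{lemma 7.2}, I obtain $\dim H^0(Z(w_r,\underline{j}_r),T_{(w_r,\underline{j}_r)})_\mu \le 1$. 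The LES for $(w_r, \tau_r)$ then bounds $\dim H^0(Z(\tau_r,..),T)_\mu$ above by $\dim H^0(Z(w_r,..),T)_\mu + \dim H^1(w_r,\alpha_{n-1})_\mu \le 1+1 = 2$, where the one-dimensionality of $H^1(w_r,\alpha_{n-1})_\mu$ comes from Lemma \ref{lemma 6.2}(3).

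For the matching lower bound and the surjection onto $H^1(w_r,\alpha_{n-1})_\mu$, I would exhibit two linearly independent elements in $H^0(Z(\tau_r,..),T)_\mu$ by working backwards through further surjections. The factorization $\tau_r = w_{r-1} s_n \cdot [a_r, n-2]$ with $[a_r, n-2] \in W_J$ gives via Lemma \ref{lemma 7.2} a surjection $H^0(Z(\tau_r,..),T)_\mu \twoheadrightarrow H^0(Z(w_{r-1} s_n,..),T)_\mu$. The LES for the pair $(w_{r-1} s_n, w_{r-1})$, whose last letter $s_n \ne s_{n-1}$ so that Lemma \ref{lemma 2.5} kills $H^1(w_{r-1}s_n,\alpha_n)$, combined with $H^0(w_{r-1}s_n,\alpha_n)_\mu \ne 0$ from Corollary \ref{cor 6.4}, yields $\dim H^0(Z(w_{r-1}s_n,..),T)_\mu \ge 1 + \dim H^0(Z(w_{r-1},..),T)_\mu$. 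The latter is nonzero by one further LES step (legal because $H^1(w_{r-1},\alpha_{n-1})_\mu = 0$ by Corollary \ref{cor 6.8}) followed by a Lemma \ref{lemma 7.2} surjection onto $H^0(Z(\sigma_{r-1},..),T)_\mu$, where Corollary \ref{cor 6.3} furnishes the nonvanishing $H^0(\sigma_{r-1}, \alpha_{n+2-r})_\mu \ne 0$. Chasing these surjections in reverse yields $\dim H^0(Z(\tau_r,..),T)_\mu \ge 2$, hence equality. The surjection onto $H^1(w_r,\alpha_{n-1})_\mu$ then drops out by a dimension count in the LES for $(w_r, \tau_r)$: the image of $H^0(Z(w_r,..),T)_\mu$ is at most one-dimensional, so the cokernel sitting inside $H^0(Z(\tau_r,..),T)_\mu$ has dimension at least $1$ and must fill the one-dimensional $H^1(w_r,\alpha_{n-1})_\mu$. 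The main bookkeeping obstacle is verifying each factorization identity (e.g.\ $\tau_m = w_{m-1} \cdot s_n[a_m, n-2]$ and $\tau_{r-1} = \sigma_{r-1} \cdot s_{n+3-r}\cdots s_{n-2}$) cleanly so that the alternation of LES and Lemma \ref{lemma 7.2} applies at every step.
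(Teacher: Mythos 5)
Your proposal is correct and follows essentially the same route as the paper: part (1) by the same alternation of Lemma \ref{lemma 7.2} surjections and LES steps from $u'_1$ down to $\tau_r$, and part (2) by the same upper bound via part (1) at $r+1$ plus the LES for $(w_r,\tau_r)$, and the same lower bound obtained by pushing through $w_{r-1}s_n$, $w_{r-1}$, $\tau_{r-1}$ and $\sigma_{r-1}$ using Corollaries \ref{cor 6.3}, \ref{cor 6.4} and \ref{cor 6.8}, followed by the identical dimension count for the surjectivity onto $H^1(w_r,\alpha_{n-1})_\mu$. The factorizations you flag as the main bookkeeping ($\tau_m=w_{m-1}s_n[a_m,n-2]$ with the trailing word in $W_J$) are exactly the ones the paper relies on implicitly, and they check out.
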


\section{main theorem}
In this section we prove the main theorem.

Recall that $G=PSO(2n+1, \mathbb{C}) (n\ge 3),$ and let $c$ be a Coxeter element of $W.$ Then there exists a decreasing sequence 
$n\ge a_{1}> a_{2}>\dots >a_{k}= 1$ of positive integers such that $c=[a_{1}, n][a_{2}, a_{1}-1]\cdots[a_{k}, a_{k-1}-1],$ where $[i, j]$ for $i\le j$ denotes $s_{i}s_{i+1}\cdots s_{j}.$

Let $\underline{i}=(\underline{i}^1, \underline{i}^2 ,\dots ,\underline{i}^n)$ be a sequence corresponding to a reduced expression of $w_{0},$ where $\underline{i}^r$ ($1\le r\le n$) is a sequence of reduced expressions of $c$ (see Lemma \ref {lemma 3.4}).

\begin{thm}\label{theorem 8.1}
$H^j(Z(w_{0},\underline{i}), T_{(w_{0}, \underline{i})})=0$ for all $j\ge 1$ if and only if $a_{2}\neq n-1.$	
\end{thm}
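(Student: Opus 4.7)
The plan is to use the result of \cite{CKP} that $H^j(Z(w,\underline i), T) = 0$ for $j \geq 2$, so that only the first cohomology is at stake, and then to reduce via Lemma 7.3 to studying $H^1(Z(u_1, \underline i_1), T)$. From there I will peel off simple reflections one at a time from the right of the reduced expression of $u_1$ using the long exact sequence (LES), all the way down to the empty word, monitoring how $H^1$ of the tangent bundle behaves at each step.

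For the forward direction ($a_2 \neq n-1 \Rightarrow$ vanishing), the peeled letter $i_r$ is generically different from $n-1$, and then Lemma 2.5 gives $H^1(w, \alpha_{i_r}) = 0$, so the LES yields an isomorphism $H^1(Z(w, \underline i), T) \cong H^1(Z(\tau, \underline i'), T)$. The peeling passes through $s_{n-1}$ only at the distinguished waypoints $u_1, w_k, w_{k-1}, \ldots, w_2, w_1$. For $u_1$ and for $w_r$ with $3 \leq r \leq k$, the modules $H^1(u_1, \alpha_{n-1})$ and $H^1(w_r, \alpha_{n-1})$ are nonzero, but Corollary 7.6 and Lemma 7.7(2) furnish surjections from the preceding $H^0$ of the tangent bundle onto them, so the connecting map in the LES vanishes and the peeling step still preserves the vanishing of $H^1(Z, T)$. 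For $w_1$ (Lemma 6.2(1)) and, by hypothesis, for $w_2$ (Lemma 6.2(2), whose only input is $a_2 \neq n-1$), the module $H^1(w_r, \alpha_{n-1})$ itself vanishes. Chaining these isomorphisms and vanishings all the way down to the empty word produces $H^1(Z(u_1, \underline i_1), T) = 0$.

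For the converse ($a_2 = n-1 \Rightarrow$ nonvanishing), the inequalities $n \geq a_1 > a_2 = n-1$ force $a_1 = n$, so $w_2 = s_n s_{n-1}$. A direct SES computation in this boundary case produces $H^1(w_2, \alpha_{n-1}) \cong \mathbb{C}_{\alpha_{n-1}+\alpha_n}$, a line at the positive root weight $\alpha_{n-1}+\alpha_n$. Now $\tau_2 = s_n$, so $Z(\tau_2, (n)) \cong \mathbb{P}^1$ and $H^0(Z(\tau_2), T)$ has weights $\{-\alpha_n, 0, \alpha_n\}$, which miss $\alpha_{n-1}+\alpha_n$. Hence the absorbing map $H^0(Z(\tau_2), T) \to H^1(w_2, \alpha_{n-1})$ is zero, and the class injects into $H^1(Z(w_2, \underline j_2), T)$. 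Pushing further up, the only LES-absorption weights one encounters are the members of the sets $M_r$ and $M_0$ of Corollary 6.8, which consist entirely of strictly negative weights $-(\beta_j + \alpha_n)$ and are therefore disjoint from $\alpha_{n-1}+\alpha_n$. The class consequently survives to $H^1(Z(w_0, \underline i), T)$, which is thus nonzero.

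The main obstacle is the converse direction: even granting the obstruction class at $w_2$, one must verify it is not killed by any of the absorption maps encountered as one climbs the LES back up to $w_0$. The sign-of-weight separation via Corollary 6.8 is the key point, since it cleanly decouples the positive-weight surviving class from the negative-weight absorptions; without this the bookkeeping would be delicate. The forward direction, by contrast, is a mechanical cascade of Lemma 2.5, Corollary 7.6, Lemma 6.2, and Lemma 7.7.
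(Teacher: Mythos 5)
Your proposal is correct and follows essentially the same route as the paper: reduce to $Z(u_1,\underline{i}_1)$ via Lemma \ref{lemma 7.3}, peel letters with the LES using Lemma \ref{lemma 2.5}, Corollary \ref{cor 7.6}, Lemma \ref{lemma 7.7}(2) and Lemma \ref{lemma 6.2} for the vanishing direction, and exhibit the unabsorbed class $\mathbb{C}_{\alpha_{n-1}+\alpha_n}\subseteq H^1(s_ns_{n-1},\alpha_{n-1})$ for the converse. The only (harmless) difference is that your weight-disjointness bookkeeping via Corollary \ref{cor 6.8} in the converse is unnecessary: since $H^2(w,\alpha_{i_r})=0$ the LES always gives a surjection $H^1(Z(w,\underline{i}),T_{(w,\underline{i})})\twoheadrightarrow H^1(Z(\tau,\underline{i'}),T_{(\tau,\underline{i'})})$, so nonvanishing at an initial subword propagates to $w_0$ immediately (this is exactly Lemma \ref{lemma 7.1}, which the paper invokes).
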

\begin{proof}
From \cite[Proposition 3.1, p. 673]{CKP}, we have $H^j(Z(w_{0}, \underline{i}), T_{(w_{0}, \underline{i})})=0$ for all $j\ge 2.$ 
It is enough to prove the following:
$H^1(Z(w_{0}, \underline{i}), T_{(w_{0}, \underline{i})})=0$ if and only if $c$ is of the form $[a_{1}, n][a_{2}, a_{1}-1]\cdots[a_{k}, a_{k-1}-1]$ with $a_{2}\neq n-1.$

Proof of $(\implies)$: If  $a_{2}=n-1,$ then $a_{1}=n$ and $c=s_{n}s_{n-1}v,$ where $v\in W_{J}$ and $J=S\setminus \{\alpha_{n-1}, \alpha_{n}\}.$ Let $u=s_{n}s_{n-1}$. Then $c=uv.$
Let $\underline{j}=(n, n-1)$ be the sequence corresponding to $u.$ Then
using LES, we have: 

$$0\longrightarrow H^0(u,\alpha_{n-1})  \longrightarrow H^0(Z(u,\underline{j}), T_{(u,\underline{j})})\longrightarrow H^0(s_{n}, \alpha_{n})\longrightarrow$$

$$ H^1(u,\alpha_{n-1})\xrightarrow f H^1(Z(u,\underline{j}), T_{(u,\underline{j})})\longrightarrow 0.$$

We see that $H^1(s_{n}s_{n-1}, \alpha_{n-1})=\mathbb{C}_{\alpha_{n} + \alpha_{n-1}}$ and $H^0(s_{n}, \alpha_{n})_{\alpha_{n} + \alpha_{n-1}}=0.$ Hence $f$ is non zero homomorphism.
Hence $H^1(Z(u, \underline{j}), T_{(u,\underline{j})}))\neq 0.$ 
By Lemma \ref {lemma 7.1}, the natural homomorphism 

$$H^1(Z(w_{0}, \underline{i}), T_{(w_{0}, \underline{i})}) \longrightarrow H^1(Z(u,\underline{j}), T_{(u, \underline{j})})$$ is surjective.

Hence we have 

$$H^1(Z(w_{0}, \underline{i}), T_{(w_{0}, \underline{i})})\neq 0.$$

Proof of $(\impliedby):$ Assume that $a_{2}\neq n-1.$ Recall that $w_{k}=[a_{1}, n]\cdots [a_{k-1}, n][a_{k}, n-1],$ $u_{1}=w_{k}s_{n}[a_{k}, n-1].$ By Lemma \ref {lemma 7.3}(2), the natural homomorphism
\begin{center}
	$H^1(Z(w_{0}, \underline{i}), T_{(w_{0}, \underline{i})})\longrightarrow H^1(Z(u_{1}, \underline{i}_{1}),T_{(u_{1}, \underline{i}_{1})})$ \hspace{6.5cm}      (7.1.1)
\end{center}
of $B$-modules is an isomorphism.

 By using LES, we have the following exact sequence of $B$-modules:

	$$\cdots\longrightarrow H^0(Z(u_{1}, \underline{i}_{1}),T_{(u_{1}, \underline{i}_{1})})\longrightarrow H^0(Z(u'_{1}, \underline{i'}_{1}),T_{(u'_{1}, \underline{i'}_{1})})\xrightarrow {h_{1}}$$
	$$H^1(u_{1}, \alpha_{n-1})
	\longrightarrow H^1(Z(u_{1}, \underline{i}_{1}),T_{(u_{1}, \underline{i}_{1})})\longrightarrow H^1(Z(u'_{1}, \underline{i'}_{1}),T_{(u'_{1}, \underline{i'}_{1})})\longrightarrow0.$$

By Corollary \ref{cor 7.6}, we see that the natural homomorphism $h_{1}:H^0(Z(u'_{1}, \underline{i'}_{1}), T_{(u'_{1}, \underline{i'}_{1})})\longrightarrow H^1(u_{1}, \alpha_{n-1}) $
is surjective. Therefore, the natural homomorphism

$$H^1(Z(u_{1}, \underline{i}_{1}),T_{(u_{1}, \underline{i}_{1})})\longrightarrow H^1(Z(u'_{1}, \underline{i'}_{1}),T_{(u'_{1}, \underline{i'}_{1})})\hspace{6.1cm} (7.1.2)$$
is an isomorphism.

By Lemma \ref {lemma 7.2}(2), the natural homomorphism 

$$ H^1(Z(u'_{1}, \underline{i'}_{1}),T_{(u'_{1}, \underline{i'}_{1})})\longrightarrow H^1(Z(w_{k}, \underline{j}_{k}), T_{(w_{k}, \underline{j}_{k})})\hspace{5.9cm}  (7.1.3)$$

is an isomorphism.

Therefore, by  (7.1.1), (7.1.2) and (7.1.3) the natural homomorphism 

$$H^1(Z(w_{0}, \underline{i}), T_{(w_{0}, \underline{i})})\longrightarrow H^1(Z(w_{k}, \underline{j}_{k}), T_{(w_{k}, \underline{j}_{k})})     \hspace{6.1cm}      (7.1.4)$$

is an isomorphism. 

Recall that $\tau_{k}=[a_{1}, n][a_{2}, n]\cdots [a_{k-1}, n][a_{k}, n-2].$ By using LES, we have the following exact sequence of $B$-modules:

$$\cdots\longrightarrow H^0(Z(w_{k}, \underline{j}_{k}),T_{(w_{k}, \underline{j}_{k})})\longrightarrow H^0(Z(\tau_{k}, \underline{j'}_{k}),T_{(\tau_{k}, \underline{j'}_{k})})\xrightarrow {h_{2}}$$
$$H^1(w_{k}, \alpha_{n-1})
\longrightarrow H^1(Z(w_{k}, \underline{j}_{k}),T_{(w_{k}, \underline{j}_{k})})\xrightarrow{h_{3}} H^1(Z(\tau_{k}, \underline{j'}_{k}),T_{(\tau_{k}, \underline{j'}_{k})})\longrightarrow0.$$

By Lemma \ref {lemma 7.7}(2), we see that the map $h_{2}:H^0(Z(\tau_{k}, \underline{j'}_{k}),T_{(\tau_{k}, \underline{j'}_{k})})\longrightarrow H^1(w_{k}, \alpha_{n-1})$
is surjective.

 Therefore, the map $h_{3}: H^1(Z(w_{k}, \underline{j}_{k}),T_{(w_{k}, \underline{j}_{k})})\longrightarrow H^1(Z(\tau_{k}, \underline{j'}_{k}),T_{(\tau_{k}, \underline{j'}_{k})})\hspace{1cm} (7.1.5)$
 
  is an isomorphism.

By using Lemma \ref {lemma 7.2}(2) we see that the natural map

$$ H^1(Z(\tau_{k}, \underline{j'}_{k}),T_{(\tau_{k}, \underline{j'}_{k})})\longrightarrow H^1(Z(w_{k-1}, \underline{j}_{k-1}),T_{(w_{k-1}, \underline{j}_{k-1})})\hspace{4cm}(7.1.6)$$

is an isomorphism.

By using LES, we have the following exact sequence of $B$-modules:

$$\cdots\longrightarrow H^0(Z(w_{k-1}, \underline{j}_{k-1}),T_{(w_{k-1}, \underline{j}_{k-1})})\longrightarrow H^0(Z(\tau_{k-1}, \underline{j'}_{k-1}),T_{(\tau_{k-1}, \underline{j'}_{k-1})})\rightarrow $$
$$H^1(w_{k-1}, \alpha_{n-1})
\longrightarrow H^1(Z(w_{k-1}, \underline{j}_{k-1}),T_{(w_{k-1}, \underline{j}_{k-1})})\rightarrow H^1(Z(\tau_{k-1}, \underline{j'}_{k-1}),T_{(\tau_{k-1}, \underline{j'}_{k-1})})\longrightarrow0.$$

By Lemma \ref {lemma 7.7}(2), we see that the natural map $H^0(Z(\tau_{k-1}, \underline{j'}_{k-1}),T_{(\tau_{k-1}, \underline{j'}_{k-1})})\longrightarrow H^1(w_{k-1}, \alpha_{n-1})$
is surjective.

 Therefore, the natural map 
 
 $ H^1(Z(w_{k-1}, \underline{j}_{k-1}),T_{(w_{k-1}, \underline{j}_{k-1})})\longrightarrow H^1(Z(\tau_{k-1}, \underline{j'}_{k-1}),T_{(\tau_{k-1}, \underline{j'}_{k-1})})$ \hspace{3cm} (7.1.7)
  is an isomorphism.

By using Lemma \ref {lemma 7.2}(2) and \ref {lemma 7.7}(2) repeatedly, we see that the natural map

$$ H^1(Z(\tau_{k-1}, \underline{j'}_{k-1}),T_{(\tau_{k-1}, \underline{j'}_{k-1})})\longrightarrow H^1(Z(\tau_{r}, \underline{j'}_{r}),T_{(\tau_{r}, \underline{j'}_{r})})\hspace{5cm}  (7.1.8)$$

is an isomorphism for all $3\le r\le k-2.$

Therefore by ( (7.1.4), (7.1.5), (7.1.6), (7.1.7), (7.1.8) we have the natural homomorphism

$$H^1(Z(w_{0}, \underline{i}), T_{(w_{0}, \underline{i})})\longrightarrow H^1(Z(\tau_{3}, \underline{j'}_{3}), T_{(\tau_{3}, \underline{j'}_{3})})      \hspace{7cm}    (7.1.9)$$

is an isomorphism.

Again from Lemma \ref {lemma 7.2}(2), we see that the natural map

$$ H^1(Z(\tau_{3}, \underline{j'}_{3}),T_{(\tau_{3}, \underline{j'}_{3})})\longrightarrow H^1(Z(w_{2}, \underline{j}_{2}),T_{(w_{2}, \underline{j}_{2})})$$

is an isomorphism.

Since $a_{2}\neq n-1,$ by Lemma \ref{lemma 6.2}(2) we have $H^1(w_{2}, \alpha_{n-1})=0.$ Note that by Lemma \ref {lemma 6.2}(1) we have $H^1(w_{1}, \alpha_{n-1})=0.$ 

By using Lemma \ref {lemma 2.5} and using LES, we have  $H^1(Z(w_{2}, \underline{j}_{2}),T_{(w_{2}, \underline{j}_{2})})=0.$ Hence we conclude that $H^1(Z(w_{0}, \underline{i}), T_{(w_{0}, \underline{i})})=0.$ This completes the proof of the theorem.

\end{proof}
\begin{cor}
	Let $c$ be a Coxeter element such that $c$ is of the form $[a_{1}, n][a_{2}, a_{1}-1]\cdots[a_{k}, a_{k-1}-1]$ with $a_{2}\neq n-1$ and $a_{k}=1.$
	Let $(w_{0}, \underline{i})$ be a reduced expression of $w_{0}$
	in terms of $c$ as in Theorem \ref {theorem 8.1}. Then, $Z(w_{0}, \underline{i})$ has no deformations.
\end{cor}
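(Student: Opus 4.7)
The proof plan is essentially to invoke standard deformation theory together with the vanishing established in Theorem \ref{theorem 8.1}. Since $Z(w_0, \underline{i})$ is a smooth projective variety, its infinitesimal deformations are governed by the cohomology of its tangent bundle: the space of first-order deformations is identified with $H^1(Z(w_0, \underline{i}), T_{Z(w_0, \underline{i})})$, while obstructions to extending deformations lie in $H^2(Z(w_0, \underline{i}), T_{Z(w_0, \underline{i})})$.

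The first step is to recall from \cite[Section 3]{CKP} that for any reduced expression $\underline{i}$ of any $w \in W$, one has $H^j(Z(w, \underline{i}), T_{Z(w, \underline{i})}) = 0$ for all $j \geq 2$; in particular, $H^2 = 0$ for our $(w_0, \underline{i})$, so there are no obstructions. The second step is to invoke Theorem \ref{theorem 8.1} under the hypothesis $a_2 \neq n-1$: this gives $H^1(Z(w_0, \underline{i}), T_{Z(w_0, \underline{i})}) = 0$ as well. Combining the two yields the vanishing of the full deformation-theoretic data in positive cohomological degree.

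Finally I would conclude, by the standard principle that if both the space of first-order deformations and the obstruction space vanish then the variety admits no nontrivial (infinitesimal, and therefore formal/local) deformations, that $Z(w_0, \underline{i})$ is rigid. There is no real obstacle here, as the corollary is essentially a one-line reinterpretation of Theorem \ref{theorem 8.1} via Kodaira--Spencer theory; the substantive work was already done in proving the main theorem.
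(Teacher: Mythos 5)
Your argument is correct and is exactly the paper's proof: both combine the vanishing $H^j(Z(w_0,\underline{i}), T_{(w_0,\underline{i})})=0$ for $j\ge 2$ from \cite{CKP} with the $H^1$-vanishing of Theorem \ref{theorem 8.1}, and then conclude rigidity by standard deformation theory (the paper cites \cite[Proposition 6.2.10]{Huy} for this last step). No gaps.
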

\begin{proof}
	By Theorem \ref {theorem 8.1} and by \cite[Proposition 3.1, p.673]{CKP}, we have $H^i(Z(w_{0}, \underline{i}), T_{(w_{0}, \underline{i})})=0$ for all $i>0.$ Hence, by \cite[Proposition 6.2.10, p.272]{Huy}, we see that $Z(w_{0}, \underline{i})$ has no deformations.
\end{proof}

\begin{rem}
Theorem 7.1 does not hold for $PSO(5, \mathbb{C}).$ 
\end{rem}
\begin{proof} We take $c=s_{1}s_{2}.$ Here $a_{2} \neq 1.$ 
Further, we have $w_{0}=c^2=s_{1}s_{2}s_{1}s_{2}.$ Let $\underline{i}=(1, 2, 1, 2).$ It is easy to see by using SES repeatedly that $H^1(s_{1}s_{2}s_{1}, \alpha_{1})=\mathbb{C}_{\alpha_{1}+\alpha_{2}}\oplus \mathbb{C}_{\alpha_{2}}.$ Further, note that $H^0(Z(s_{1}s_{2},(1,2)),T_{(s_{1}s_{2},(1,2))})_{\alpha_{1}+\alpha_{2}}=0$ (see \cite[Proposition 6.3(1), p.688]{CKP}). Hence by using LES we have

$H^1(Z(s_{1}s_{2}s_{1},(1,2,1)),T_{(s_{1}s_{2}s_{1},(1,2,1))})\neq0.$

Therefore by using Lemma \ref{lemma 7.1} we have $H^1(Z(w_{0}, \underline{i}), T_{(w_{0}, \underline{i})})\neq 0.$ 
\end{proof}

{\bf Acknowledgements} 
We thank the referee for the useful comments and suggestions. We thank the 
Infosys Foundation for the partial finance support.

\end{document}